\documentclass[11 pt]{amsart}
\usepackage[utf8]{inputenc}
\usepackage{mathtools}

\makeatletter
\@namedef{subjclassname@2020}{\textup{2020} Mathematics Subject Classification}
\makeatother

\usepackage{cite}

\allowdisplaybreaks[1]

\usepackage{amsmath,amsthm,amssymb}
\usepackage{hyperref}
\usepackage{amsfonts}
\usepackage{amsmath}
\usepackage{mathrsfs}
\usepackage{tabto}
\usepackage{changepage} 
\usepackage{fullpage} 
\usepackage{graphicx}
\usepackage{enumitem} 

\newcommand{\ity}{\infty}
\newcommand{\C}{\mathbb{C}}

\newcommand{\N}{\mathbb{N}}
\newcommand{\Z}{\mathbb{Z}}

\numberwithin{equation}{section}
\newtheorem{theorem}{Theorem}[section]
\newtheorem{lemma}[theorem]{Lemma}
\newtheorem{corollary}[theorem]{Corollary}

\newtheorem{remark}[theorem]{Remark}
\newtheorem{example}[theorem]{Example}

\title [dynamics of permutable entire functions]{A survey on permutable transcendental entire functions and their dynamics}
\author{ DINESH KUMAR}
\date{}
\thanks {The research work of the first  author is supported by ANRF(SERB) research grant TAR/2023/000197}
\begin{document}
\keywords{Fatou set , Julia set, bungee set, escaping set, completely invariant, permutable}
\subjclass[2020] {37F10,  30D05}
\maketitle


\begin{abstract}
\noindent In this  paper, we survey  important results on the dynamics of permutable transcendental entire functions  from 1958 to 2025. We have discussed the forms of  transcendental entire functions that could be permutable. We have also discussed  the dynamical properties of permutable entire functions. For instance,  how the Fatou and Julia sets are related. We have also considered  the relation of the subsets which are classified based on the dynamics of the orbit of a point in the complex plane. They are the filled  Julia set $K(f)$ (set of points whose orbits remain bounded), the escaping set $I(f)$ (set of points whose orbits escape to infinity) and the bungee set $BU(f)$ (set of points which are neither bounded nor escaping to infinity). The escaping set is further classified based on the speed of escape, namely, the fast escaping set $A(f)$ or other sets such as $I_0(f)$ and $T(f)$, whose definitions are mentioned in this article. We have also discussed the dynamical relations of these sets. The bungee set sometimes contains a wandering domain, and in this article we have discussed the dynamical relations and conditions that dictates whether wandering domains exists or if it is contained within the bungee set.
\end{abstract}


\section{Introduction}
\noindent \large{ A function $f: \mathbb{C} \to \mathbb{C}$ is called entire if it is complex differentiable at all points and it is called transcendental if it has an \emph{essential singularity} at $\ity.$ Polynomials are precisely the entire functions having a \emph{pole} at $\ity.$ Two entire functions $f,\ g$ are said to be permutable  if $f(g(z))=g(f(z))$ for all $z\in\C.$ It is also written as $f \circ g = g \circ f$. Two permutable functions are also said to commute with each other. 	
It is easy to see that  $z^m $ commutes with $z^n$ for every $m,n\in\N$. Two iterates $f^m, f^n$ of every entire function $f$ commute. We refer \cite[Theorem 3.4.4]{prasolov} for other examples. 

\noindent One of the  remarkable families of polynomials are the Chebyshev polynomials $T_n(z)$ which are recursively defined by the relation: $T_{n+1}({z})=2 {z}T_{n}({z})-T_{n-1}({z})$ with $T_0({z})=1, T_1({z})={z}$ \cite[p.\, 100]{prasolov}. It is easy to see that any two Chebyshev polynomials $T_m({z})$ and $T_n({z})$ commute. 
 To define equivalence of two pairs of polynomials, let $P, Q$ be a pair of polynomials.  We  say that the pair of polynomials $\phi \circ P \circ \phi^{-1}$ and $\phi \circ Q \circ \phi^{-1}$ are equivalent to the pair of polynomials $P$ and $Q$ if $\phi({z})= C {z}+D,$  $C, D \in {\C}, C\neq 0.$ It is easy to check that if  the pair of polynomials $P$ and $Q$ commute, then so does its equivalent pair $\phi \circ P \circ \phi^{-1}$ and $\phi \circ Q \circ \phi^{-1}.$ Moreover, the Julia set is same for $P, Q, \phi \circ P \circ \phi^{-1}$ and $\phi \circ Q \circ \phi^{-1}.$ 

\noindent The following theorem of Ritt gives a complete classification of pair of polynomials equivalent to a pair of commuting polynomials $P,$  $Q$.
It says that if two complex polynomials commute, then up to affine conjugation, they are either both monomials or both Chebyshev polynomials. 
\begin{theorem}\cite[Theorem 3.4.4]{prasolov}
Let there be two permutable polynomials $\mathfrak{f}$ and $\mathfrak{g}$. Then one of the following pairs are equivalent to the pair of $(\mathfrak{f,g})$:
\begin{enumerate}
\item{${z}^m$ and $\epsilon {z}^n$ where $\epsilon^{m-1}=1$}
\item{$\pm T_m({z})$ and $\pm T_n({z}),$ where $T_m({z})$ and $T_n({z})$ are Chebyshev polynomials; }
\item{$\epsilon_1Q^{ k}({z})$ and $\epsilon_2Q^{ l}({z})$ where $\epsilon_1^q=\epsilon_2^q=1$ and $Q({z})={z}P({z}^q)$ and $Q^{ 1}=Q,\ Q^{ 2}=Q \circ Q, \ Q^{ 3}=Q \circ Q \circ Q$ and so on.}
\end{enumerate}
\end{theorem}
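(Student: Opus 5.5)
The plan is to follow the classical route through Julia sets and the theory of Ritt–Julia–Fatou, which Prasolov presents. Assume $\deg\mathfrak f=m\ge 2$ and $\deg\mathfrak g=n\ge 2$; degree $\le 1$ is a short separate case. The first step is to show that commuting polynomials have the same Julia set, $J(\mathfrak f)=J(\mathfrak g)$. The filled Julia set $K(\mathfrak f)$ is characterised by bounded orbits. Since $\mathfrak g$ is a polynomial, it maps points with bounded $\mathfrak f$-orbit to points with bounded $\mathfrak f$-orbit, because $\mathfrak f^k(\mathfrak g(z))=\mathfrak g(\mathfrak f^k(z))$. Points escaping under $\mathfrak f$ are mapped to escaping points. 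Hence $\mathfrak g^{-1}(K(\mathfrak f))=K(\mathfrak f)$ is completely invariant. Since it is compact and full, a Green's function argument gives $K(\mathfrak g)=K(\mathfrak f)$: the Green's function $G_{\mathfrak f}$ satisfies $G_{\mathfrak f}\circ\mathfrak g=n\,G_{\mathfrak f}$, by uniqueness of the harmonic function with logarithmic pole whose zero set is $K$. Consequently both maps share the Julia set $J$ and the measure of maximal entropy $\mu$.

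Next I would normalise by an affine conjugacy $\phi(z)=Cz+D$. Put the common barycenter of $\mu$ at $0$, and make the common Böttcher coordinates tangent to the identity at $\infty$. Writing $\mathfrak f(z)=a z^m+\dots$ and $\mathfrak g(z)=bz^n+\dots$, comparing leading terms in $\mathfrak f\circ\mathfrak g=\mathfrak g\circ\mathfrak f$ gives $a\,b^m=b\,a^n$. After scaling, $a=1$ and $b^{m-1}=1$; this is the source of the root of unity $\epsilon$. Both maps are now centred and monic up to roots of unity. I would then use the common Böttcher map $\Phi$ near $\infty$, which conjugates $\mathfrak f$ to $w^m$. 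Since $\mathfrak g$ commutes with $\mathfrak f$, the map $\Phi\circ\mathfrak g\circ\Phi^{-1}$ commutes with $w^m$ and behaves like $\epsilon w^n$ at infinity, so it equals $\epsilon w^n$ exactly.

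The main step splits by the symmetry group $\Sigma=\{\sigma(z)=\zeta z+c : \sigma(J)=J\}$. If $\Sigma$ is infinite, $J$ is a circle and we land in case (1). If $J$ is a segment (a real-analytic arc), we land in the Chebyshev case (2), via the uniqueness of polynomials with interval Julia set, since the conjugacy $z=w+w^{-1}$ is forced. Otherwise $\Sigma$ is finite cyclic of order $q$, generated by $z\mapsto e^{2\pi i/q}z$. Then $\mathfrak f$ and $\mathfrak g$ have the form $z P(z^q)$ up to roots of unity, and the key claim is that $\mathfrak f$ and $\mathfrak g$ have a common iterate up to symmetry: $\mathfrak f^{l'}=\sigma\circ\mathfrak g^{k'}$. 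This comes from the Julia–Fatou result that polynomials sharing a Julia set (and not exceptional) satisfy such a relation up to symmetry. From it one extracts a common "root" $Q$ with $\mathfrak f=\epsilon_1 Q^{k}$ and $\mathfrak g=\epsilon_2Q^{l}$, as in case (3).

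I expect the main obstacle to be this final step: showing that, outside the monomial and Chebyshev cases, commuting forces a common iterate and then a common root $Q$. My first attempt would be a Böttcher-coordinate argument. In $\Phi$-coordinates both maps are monomials, so $\mathfrak f^{a}$ and $\mathfrak g^{b}$ with $m^{a}=n^{b}$ agree near $\infty$ up to a rotation. Here $m^a=n^b$ must be shown using the rigidity of $J$ (equality of Hausdorff dimension or Lyapunov exponents, $\log m$ versus $\log n$, fails unless this holds in the non-exceptional case). The root $Q$ would then be obtained from a minimal-degree element of the semigroup generated by $\mathfrak f$ and $\mathfrak g$ modulo $\Sigma$.
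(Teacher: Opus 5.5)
The paper gives no proof of this theorem; it only quotes it from Prasolov. So your argument has to be judged on its own. The preliminary steps are sound. In fact $\mathfrak g(K(\mathfrak f))\subset K(\mathfrak f)$ together with $\mathfrak f(K(\mathfrak g))\subset K(\mathfrak g)$ already gives $K(\mathfrak f)=K(\mathfrak g)$ without Green's functions. The normalisation, the identity $\Phi\circ\mathfrak g\circ\Phi^{-1}=\epsilon w^n$ (compare Laurent coefficients, using $\epsilon^m=\epsilon$), and the circle and segment cases all work.

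\textbf{The main gap.} The gap is the non-exceptional case, which is the whole content of Ritt's theorem. Appealing to ``the Julia--Fatou result'' that polynomials sharing a non-exceptional Julia set have a common iterate up to symmetry outsources exactly the hard part; that result is at least as strong as the statement. Your fallback does not replace it. A shared measure only gives $\dim\mu=\log m/\chi(\mathfrak f)=\log n/\chi(\mathfrak g)$, where $\chi$ is the Lyapunov exponent. This identity holds for $z^m$, $z^n$ with arbitrary $m,n$ and equally for $Q^{k}$, $Q^{l}$. So it carries no information about whether $\log m/\log n\in\mathbb{Q}$.

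What is missing is a mechanism that turns the absence of a relation into a continuum of local self-maps of $J$. The classical route (Ritt's) runs as follows:
\begin{enumerate}
\item Pass to iterates that have a common fixed point $p$ which is repelling for $\mathfrak f$.
\item Take the Poincar\'e function $L$ with $L(\lambda w)=\mathfrak f(L(w))$, $\lambda=\mathfrak f'(p)$.
\item Observe that $L^{-1}\circ\mathfrak g\circ L$ commutes with $w\mapsto\lambda w$ near $0$, hence equals $w\mapsto\mu w$. So $\mathfrak g(L(w))=L(\mu w)$.
\item A relation $\lambda^a=\mu^b$ then gives $\mathfrak f^{a}=\mathfrak g^{b}$ exactly.
\end{enumerate}
The real work is showing that multiplicatively independent $\lambda,\mu$ force $J$ to be a circle or a segment. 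Nothing in your proposal does this.

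\textbf{Two further errors.} First, a symmetry $z\mapsto\zeta z$ of $J$ gives $\Phi(\zeta z)=\zeta\Phi(z)$, hence $\mathfrak f(\zeta z)=\zeta^{m}\mathfrak f(z)$. So $\mathfrak f=z^{r}P(z^{q})$ with $r\equiv m \pmod q$, not $zP(z^q)$. For example, $z^2+c$ with $c\neq 0$ has $\Sigma=\{z\mapsto\pm z\}$, yet it is not a root of unity times an odd polynomial. The $q$ in case (3) is the order of the roots of unity that actually occur (often $q=1$), not $|\Sigma|$.

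Second, $Q$ cannot be obtained as a minimal-degree element of the semigroup generated by $\mathfrak f,\mathfrak g$. For $\mathfrak f=Q\circ Q$ and $\mathfrak g=Q\circ Q\circ Q$, that element is $\mathfrak f$ itself. You need a quotient such as $z\mapsto\Phi^{-1}(\zeta\,\Phi(z)^{c})$, together with a proof that it is a polynomial. This is not automatic: a branch of $P^{-1}\circ h$ can be single-valued near $\infty$ without being a polynomial, e.g.\ $P(w)=w^2$, $h(z)=z^2+1$. A genuine decomposition argument is required, for instance Engstrom's theorem on common right composition factors applied to the two factorizations of $\mathfrak f^{a}=\sigma\circ\mathfrak g^{b}$.

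\textbf{Minor point.} Degree $\le 1$ is not a short side case. The maps $z+1$ and $z+2$ commute and fit none of (1)--(3), so the statement tacitly requires both degrees to be at least $2$.
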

\par 
Commutability of two entire functions $f$ and $g$ gives that $f\circ g^2\circ f=g\circ f^2\circ g,$ and in general, $f\circ g^n\circ f^{n-1}=g\circ f^n\circ g^{n-1}$  for all $n.$ This indicates a possible relation between the iterative behaviour of  $f$ and $g.$	In order to make it precise, we need two definitions. The Fatou set of $f$, denoted by  $F(f)$  is defined as the maximal open subset of $\C$ in which the family of iterates $\{f^n\}_{n>0}$ is normal. The Julia set, denoted by $J(f)$ is the complement of the Fatou set. A point $z_0$ is called a periodic point of $f$ if $f^m(z_0)=z_0$ for some positive integer $m.$ The smallest such $m$ is called the period of $z_0$. The periodic points of period $1$ are called the  fixed points. If $z_0$ is a periodic point of period $m$, then $(f^{(m)})'(z_0)$ is called the multiplier of $z_0.$
A periodic point $z_0$ is called  attracting, repelling or indifferent (neutral) according as the modulus of its multiplier is less than, greater than, or equal to $1$ respectively. The indifferent periodic point is called rationally indifferent or parabolic if its multiplier is a root of unity. Otherwise, it is called irrationally indifferent periodic point.

 The Julia set is also the closure of repelling periodic points \cite{Baker_permu2}. 
 Furthermore, the sets $J(f)$ and $F(f)$ are completely invariant  under $f.$
 It is Julia who first revealed the dynamical relevance of permutability by proving that if rational functions $f, \, g$ each with degree at least two are permutable then $J(f)=J(g)$ \cite[Theorem 4.2.9]{beardon}. In particular, this result is true when both $f$ and $g$ are polynomials.
\par  A polynomial commuting with a transcendental entire function is necessarily of the form $P_1(z)=  e^{{2s\pi i}/p}\, z+d$ for some natural numbers $s, p $ and a complex number $d.$ 
  This was proved by  Baker \cite{Baker_german} and, also independently by  Iyer \cite{iyer}.  This article deals with permutable entire functions when  at least one of them  is transcendental. The forms and properties of entire functions commuting with a given function are surveyed in section 2. Then the relation between the dynamics (Fatou and Julia set) of commuting functions are presented. \\
\noindent\large{Given an entire function, all the possibilities of an entire function  commuting with it are found. Most of the results surveyed in Section 2 considers  functions  listed below:
\begin{enumerate}
\item $ce^{dz}+b\, (cd\neq 0, c, d, b\in\C);$
\item $z+Ce^{az}$, where  $aC\neq 0;$
\item $Q+Re^P$, where $P, Q$ and $R$ are polynomials satisfying some conditions;
\item $f(e^z) + g(e^{-z}),$ where $f, g$ are entire and $f$ is periodic of finite order;
\item $P(e^z) + Q(e^{-z}),$ where $P$ and $Q$ are polynomials at least one of which is not constant;
\item $P(z)e^{Q(z)}, P, Q$ non-constant polynomials and $P(0)\neq 0$;
\item $\cos\sqrt z;$
\item $\sin z +Q(z), Q(z)$ is a polynomial;
\item $\operatorname{sin} P(z),$ where $P(z)$ is a non-constant polynomial.
\end{enumerate}
 In Section 3, we survey various dynamical properties of permutable transcendental entire functions. We discuss for two commuting transcendental entire functions, how various subsets of Fatou and Julia sets are related. The results of Baker have been discussed which enhances our understanding of iterative behaviour of two commuting functions on a Fatou component.  A class of commuting functions has been explored for which Fatou components have similar behaviour. We discuss several conditions implying equality of Fatou set (respectively Julia set) of two commuting entire functions. Moreover, the results also specify the form of the commuting functions.
  Finally, we also consider another dynamical partition of $ \mathbb{C}$ in terms of three disjoint sets namely the bungee set $BU(f),$ the filled Julia set $K(f)$  and  the escaping set  $I(f).$ We also discuss how the bungee set, the filled Julia set and the escaping set of composite entire functions are related. As a special case we observe that for two commuting maps the bungee set, the filled Julia set and the escaping set of the composition is completely invariant under individual functions.}

\section{Various Forms of permutable entire functions}

\large{We start with the exponential function. A result of Baker determines all non-linear entire functions that commute with $e^z.$}
\begin{theorem}\label{bak1} \cite{Baker_german}
Suppose $h$ is a non-linear entire function which is permutable with  the exponential function  $f(z)=ce^{dz}+b\, (cd\neq 0, c, d, b\in\C)$ then $h=f^n$ for some $n\in\N.$
\end{theorem}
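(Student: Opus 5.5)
First normalize $f$. Conjugating by the affine map $\phi(z)=dz+db$ turns $f(z)=ce^{dz}+b$ into $F(w)=\lambda e^{w}$ with $\lambda=cde^{db}\neq 0$, and $h$ into $H=\phi\circ h\circ\phi^{-1}$. Permutability, non-linearity and the conclusion $h=f^n$ are all preserved. So we may assume $f(z)=\lambda e^{z}$, which is periodic with period $2\pi i$. By the result of Baker and Iyer quoted in the introduction, a polynomial commuting with a transcendental function is linear, so $h$ is transcendental.

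\textbf{Step 1: $h$ is quasi-periodic.} Since $f(z+2\pi i)=f(z)$, we get
\[
f(h(z+2\pi i))=h(f(z+2\pi i))=h(f(z))=f(h(z)),
\]
so $e^{h(z+2\pi i)}=e^{h(z)}$. Hence $h(z+2\pi i)-h(z)\in 2\pi i\mathbb{Z}$, and by continuity $h(z+2\pi i)=h(z)+2\pi i k$ for a fixed integer $k$. Then $h(z)-kz$ is $2\pi i$-periodic, so
\[
h(z)=kz+G(e^{z})
\]
for some $G$ holomorphic on $\C\setminus\{0\}$.

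\textbf{Step 2: Rewrite the functional equation.} Put $w=e^{z}$. The relation $h(\lambda e^z)=\lambda e^{h(z)}$ becomes
\[
k\lambda w+G(e^{\lambda w})=\lambda w^{k}e^{G(w)}, \qquad w\in\C\setminus\{0\}.
\]
The aim is to show that $G$ is in fact of exponential type in a form that lets us peel off one factor of $f$. Concretely, we want $h=f\circ h_1$ with $h_1$ entire and again commuting with $f$. If $h(z)=f(h_1(z))$, then $f\circ h_1\circ f=f\circ f\circ h_1$, so $h_1\circ f-f\circ h_1\in 2\pi i\mathbb{Z}$ is a constant, which must be handled by a further normalization.

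\textbf{Step 3: Use growth.} Use Pólya's theorem on compositions: $M(r,f\circ h)$ is at least $M(cM(r/2,h),f)$. Combined with $h\circ f=f\circ h$, this compares the growth of $h$ with iterated exponentials. From the displayed equation, the left side grows like $G(e^{\lambda w})$ while the right side contains $e^{G(w)}$. This forces the following dichotomy:
\begin{itemize}
\item if $G$ has an essential singularity at $0$, or grows faster than an exponential, the orders cannot match;
\item if $G$ has a pole or a removable singularity at $0$, one must have $k=0$ and $G$ an entire function of $w$.
\end{itemize}
Then $h=G\circ e^{z}$ and the equation reads $G(e^{\lambda w})=\lambda e^{G(w)}$. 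If $G$ is linear, $G(w)=aw+b$, comparing gives $a=\lambda$ and $b=0$, hence $h=f$. Otherwise we iterate the argument on $G$: it again commutes with $f$ after a conjugation, with strictly smaller "exponential height". Induction on the height, which is finite by Pólya-type growth bounds, then yields $h=f^n$.

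\textbf{Main obstacle.} The hardest step is Step 3: excluding an essential singularity of $G$ at $0$, and controlling the growth so that the induction terminates. I would try to compare $\log\log M(r,\cdot)$ on both sides along the real direction, where $e^{\lambda w}$ attains its maximum modulus, using Clunie-type estimates for $\log M(r,f\circ g)$. This should show that $G$ has at most polynomial growth in $w$ and $1/w$. Liouville-type reasoning on the Laurent series then pins down $G$.
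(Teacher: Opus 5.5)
Your Steps 1 and 2 are fine, apart from a sign slip in the normalization. You need $\phi(z)=d(z-b)$: with $\phi(z)=dz+db$ one gets $cde^{-db}e^{w}+2db$, although your $\lambda=cde^{db}$ is the correct value. Your remark that the constant $h_1\circ f-f\circ h_1=2\pi i j$ can be absorbed is also correct: just replace $h_1$ by $h_1-2\pi i j$.

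The genuine gap is Step 3. It carries the whole proof, and its intermediate claims are false.

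\textbf{Why growth cannot work.} Growth comparison cannot detect what matters here. The term $kz$ and the factor $w^{k}$ are negligible against $G(e^{\lambda w})$ and $e^{G(w)}$. Both sides have infinite order once $G$ is transcendental, and the true solutions have $G$ of every iterated-exponential size. Concretely:
\begin{itemize}
\item $h=f^{3}$ has $k=0$ and $G(w)=f^{2}(\lambda w)=\lambda e^{\lambda e^{\lambda w}}$, which refutes your first bullet.
\item $h=f^{2}$ has $G(w)=\lambda e^{\lambda w}$, which refutes your stated aim that $G$ have polynomial growth in $w$ and $1/w$. Reaching that aim would leave only $h=f$.
\end{itemize}
Above all, ``one must have $k=0$'' is the heart of the theorem. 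It amounts to excluding a nonlinear $h$ with $k\neq 0$ at every stage of your induction, and you give no argument for it.

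\textbf{The missing idea: value distribution, not growth.} Since $h(\lambda e^{z})=\lambda e^{h(z)}\neq 0$ and $\lambda e^{z}$ covers $\C\setminus\{0\}$, the only possible zero of $h$ is $0$. Combine this with $h(w+2\pi i j)=h(w)+2\pi i kj$ from Step 1:
\begin{itemize}
\item If $k\neq 0$ and $h(0)\neq 0$, then $h(w)=2\pi i kj$ would force $h(w-2\pi i j)=0$. So $h$ omits every value $2\pi i kj$, $j\in\Z$, contradicting Picard.
\item If $k\neq 0$ and $h(0)=0$, the same computation shows $h$ takes the two values $0$ and $2\pi i k$ only at the single points $0$ and $2\pi i$. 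This contradicts the great Picard theorem for the transcendental $h$.
\end{itemize}
Hence $k=0$. Then $h(2\pi i)=h(0)$ forces $h(0)\neq 0$, so $h=e^{\psi}$ with $\psi$ entire. The identity $\lambda e^{h(z)}=e^{\psi(\lambda e^{z})}$ now gives $h=h_1\circ f=f\circ h_1$ with $h_1=\psi-\log\lambda$, for a suitable branch of the logarithm. In particular $G(w)=h_1(\lambda w)$ is entire, so the essential singularity (or pole) at $0$ that worried you never occurs.

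\textbf{Finishing the induction.} The map $h_1$ again commutes with $f$. If $h_1$ is linear it must be the identity, so $h=f$. Otherwise $h_1$ is transcendental by Baker--Iyer, and you repeat the argument. Growth is needed only to stop this recursion. Suppose it produced $h=f^{p}\circ(f\circ h_{p+1})$ with $h_{p+1}$ nonconstant, where $p$ is the exponent in Baker's growth bound quoted in Section 3. Then P\'olya's lemma, applied with the transcendental inner function $f\circ h_{p+1}$, gives $M(r,h)>M(r,f^{p})$ for large $r$. This contradicts that bound, $M(r,h)<M(r,f^{p})$. So the recursion ends with $h=f^{n}$.
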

\noindent \large{In 1962, Baker established a result regarding the size of the set of functions that  permute with a given entire function.
Before stating it, we denote the collection of all the entire functions $g$ that  permutes with the given function $f$ by $\mathfrak C(f).$} In particular, the iterates of $f$ namely, $f^n \in \mathfrak C(f)$ \cite{Baker_permu}. Using Theorem \ref{bak1}, we conclude that $\mathfrak C(e^z)$ is countably infinite.

\begin{theorem}
({\cite{Baker_permu}, Theorem 1)
Suppose $f$ is a transcendental entire function  or is a polynomial with degree at least two. Also, suppose $f$ has a periodic point  which is either repelling or
of multiplier $1$, then $\mathfrak C(f)$ is countably infinite. }
\end{theorem}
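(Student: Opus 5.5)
We must show that $\mathfrak C(f)$ is infinite and that it is at most countable. Infinitude is immediate: the iterates $f^n$, $n\in\N$, all lie in $\mathfrak C(f)$ and are pairwise distinct, since $f$ is non-linear and so $f^n$ has different growth (or degree) for different $n$. The real content is countability. The plan is to use the periodic point $z_0$ (period $p$) as a rigid anchor: conjugation data at $z_0$ should determine a commuting function $g$ up to countably many choices.

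\emph{Step 1 (permuting periodic points).} Set $F=f^p$, so $z_0$ is a fixed point of $F$ with multiplier $\lambda$, where $|\lambda|>1$ or $\lambda=1$. Every $g\in\mathfrak C(f)$ also commutes with $F$, so $g$ maps fixed points of $F$ to fixed points of $F$. The fixed points of $F$ form a countable set $E$: it is the zero set of the non-constant entire function $F(z)-z$, which is not identically zero because $f$ is non-linear. Moreover $g$ preserves multipliers wherever $g'\neq0$: differentiating $F\circ g=g\circ F$ at $z_0$ gives $F'(g(z_0))\,g'(z_0)=g'(z_0)\,\lambda$. Hence $g(z_0)=w$ for one of countably many $w\in E$. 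It therefore suffices to show that, for each fixed $w\in E$, only countably many $g\in\mathfrak C(f)$ satisfy $g(z_0)=w$.

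\emph{Step 2 (repelling case via Koenigs).} Suppose $|\lambda|>1$. Take the Poincaré function $\Phi$, an entire function with $\Phi(0)=z_0$, $\Phi'(0)=1$ and $F(\Phi(\zeta))=\Phi(\lambda\zeta)$. Consider $G(\zeta):=g(\Phi(\zeta))$, which satisfies $F\circ G(\zeta)=G(\lambda\zeta)$ and $G(0)=w$.
\begin{itemize}
\item If $g'(z_0)\ne0$, then $w$ is repelling with multiplier $\lambda$. Let $\Phi_w$ be the Poincaré function at $w$. Near $0$ the map $\Phi_w^{-1}\circ G$ conjugates $\zeta\mapsto\lambda\zeta$ to itself, so it is linear, $\zeta\mapsto c\zeta$ with $c=g'(z_0)$. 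Thus $g\circ\Phi=\Phi_w(c\,\cdot)$ locally, hence globally, and $g$ is determined near $z_0$ by $c$, hence everywhere by the identity theorem.
\item If $g'(z_0)=0$, the order $k$ of vanishing of $g(z)-w$ is used the same way, with a countable choice of $k$.
\end{itemize}
So $g$ is determined by $w$, $k$ and the leading Taylor coefficient $c$. The remaining task is to show $c$ ranges over a countable set.

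\emph{Step 3 (the main obstacle).} This is where the argument is hardest, and I would combine two ideas. The first attempt is a growth comparison: $g\circ\Phi_{z_0}=\Phi_w(c\,\cdot)$ together with $g\circ f=f\circ g$ forces compatibility at other periodic points. In particular, the set $\{c\}$ for commuting $g$ with fixed $(w,k)$ forms a group-like set under composition. The coefficients $c$ correspond to maps $h=\Phi_w(c\Phi^{-1})$ which must be single-valued entire functions. Single-valuedness means $c$ times the "period lattice" $\{\zeta:\Phi(\zeta)=\Phi(\zeta')\}$ relation must be respected, and $\Phi$ is non-injective since it is transcendental. Fixing two points $\zeta_1\neq\zeta_2$ with $\Phi(\zeta_1)=\Phi(\zeta_2)$, we need $\Phi_w(c\zeta_1)=\Phi_w(c\zeta_2)$. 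This is a nontrivial analytic equation in $c$, so its solution set is discrete, hence countable, unless the equation holds identically. The identically-vanishing case must be excluded by a separate argument, e.g. letting $c\to0$ and comparing derivatives.

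\emph{Step 4 (parabolic case).} In the case $\lambda=1$, replace the Koenigs coordinate by Fatou coordinates on a petal, where $F$ becomes $\zeta\mapsto\zeta+1$. Commuting maps then become translations $\zeta\mapsto\zeta+c$ in these coordinates, and the same discreteness argument, using the non-injectivity of the extended Fatou coordinate, gives countably many $c$. Finally, taking a countable union over $w$, $k$ and $c$ proves $\mathfrak C(f)$ is countable.
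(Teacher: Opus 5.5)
The survey states this theorem without proof (it only cites Baker), so I am judging your argument on its own terms. Your overall structure is sound, and the repelling case essentially goes through. The point $g(z_0)$ lies in the countable fixed-point set of $F=f^p$, the relation $g\circ\Phi=\Phi_w(c\,\zeta^k)$ determines $g$, and a pair $\zeta_1\neq\zeta_2$ with $\Phi(\zeta_1)=\Phi(\zeta_2)$ confines $c$ to the zero set of $H(c)=\Phi_w(c\zeta_1^k)-\Phi_w(c\zeta_2^k)$. That case needs three small repairs:
\begin{itemize}
\item The constants $g\equiv w$ with $f(w)=w$ lie in $\mathfrak C(f)$ but are not covered by your $(w,k,c)$ parametrization. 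They are countably many anyway.
\item For $k\ge 2$ you have $H'(0)=\zeta_1^k-\zeta_2^k$, so you must choose the pair with $\zeta_1^k\neq\zeta_2^k$. This is possible: $\Phi(\lambda\zeta)=F(\Phi(\zeta))$ with $F$ non-linear forces $\Phi$ to be transcendental, so by Picard it has infinite fibres.
\item Non-injectivity of $\Phi$ needs a reason, e.g.\ an injective entire function is affine, which would make $F$ affine.
\end{itemize}
The ``group-like set'' remark is false when $w\neq z_0$ and is not needed.

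The genuine gap is Step 4, which covers half of the hypothesis. The claim that commuting maps ``become translations'' in Fatou coordinates is exactly the step that is not automatic. In Step 2, linearity came from comparing power series at the finite fixed point $0$ of $\zeta\mapsto\lambda\zeta$. In Fatou coordinates the fixed point sits at $\infty$, and commuting with $\zeta\mapsto\zeta+1$ only gives $\chi(\zeta)=\zeta+p(\zeta)$ with $p$ an a priori arbitrary $1$-periodic function, so $g$ is not yet reduced to one complex parameter.

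You need an extra argument. Use the repelling Fatou parametrizations $\Psi,\Psi_w$ at $z_0$ and $w$: these are entire, satisfy $\Psi(\zeta+1)=F(\Psi(\zeta))$, and are univalent on a left half-plane where they tend to the fixed point. This, not the attracting coordinate on the basin, is the true analogue of the Poincar\'e function. From $\Psi^{-1}(z)\sim -1/(ma(z-z_0)^m)$ (for $F(z)=z+a(z-z_0)^{m+1}+\cdots$), the analogous expansion at $w$, and $g(z)-w\sim c(z-z_0)^k$, you get $\chi=\Psi_w^{-1}\circ g\circ\Psi\sim\kappa\zeta$ on a left sector. Then $\chi(\zeta+1)=\chi(\zeta)+1$ forces $\kappa=1$, so $p(\zeta)=o(|\zeta|)$. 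Extending $p$ to $\C$ by periodicity and writing $p(\zeta)=P(e^{2\pi i\zeta})$, the function $P$ has at most logarithmic growth at $0$ and $\infty$, hence is constant. Only now do you have $g\circ\Psi=\Psi_w(\cdot+t)$.

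Second, your proposed exclusion of the degenerate case (``let $c\to0$ and compare derivatives'') does not transfer. For $H(t)=\Psi_w(\zeta_1+t)-\Psi_w(\zeta_2+t)$ there is no reason for a nonzero derivative at any distinguished $t$. Instead, $H\equiv0$ would make $\Psi_w$ periodic with period $\zeta_2-\zeta_1\neq0$, contradicting its univalence on a left half-plane. Non-injectivity of $\Psi$ holds because otherwise $\Psi$ is affine and $F$ would be a translation, which has no fixed point.

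With these two points supplied, plus the bookkeeping for $g(z_0)=w\neq z_0$ (there $k$ divides the number of petals at $z_0$, and there are finitely many choices of target petal), the parabolic case closes the same way the repelling one does.
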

\noindent\large{ Hypothesis of above result is always true for polynomials. 
This is because a rational function always contains at least one fixed point which is either repelling or has multiplier $1$ \cite[p. 168]{fatou}.} Hypothesis of above result is also always true for transcendental entire functions by the result of Baker \cite{Baker_permu}.
\noindent\large{Finding entire maps that commutes with the function $z+e^z$ is not easy using the previous results. Using tools from Nevanlinna theory, Kobayashi in 1980 determined all entire functions with finite order that commutes with functions of the form $f(z)=z+Ce^{az}$ where $C\neq 0, \, a$ are complex numbers. The order $\rho(f)$ of an entire function $f$ is defined as:
\[\rho(f)=\lim\underset{r\rightarrow \infty}{\sup}\frac{\log\, \log \, M(r, f)}{\log \, r}\] 
where $M(r, f)={max}_{|z| = r } |f(z)|.$ 
 It measures the growth of a function as $|z|\to\ity.$ Note that $M(r, e^z)=e^r$ and $\rho(e^z)=1.$}

\begin{theorem} \label{1} (\cite{Kobayshi}, Theorem 1)
\large{Let $f(z)=z+Ce^{az}$, where  $aC\neq 0$. Let $g$ be a non-constant entire function of finite order that is permutable with $f$. Then either $g(z)=f(z)+D$ or $g(z)=z+D$ where $D$ is a constant with $e^{aD}$ =1.
}
\end{theorem}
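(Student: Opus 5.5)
We will prove that every finite-order $g$ commuting with $f(z)=z+Ce^{az}$ is a translate of $z$ or of $f$ by a period of $e^{az}$. First normalize. Conjugating by $\phi(z)=az$ replaces $f$ by $z+aCe^{z}$ and preserves both permutability and finite order, so we may assume $a=1$. The functional equation is
\[ g(z)+Ce^{g(z)} = g\bigl(z+Ce^{z}\bigr). \]
Set $h(z)=g(z)-z$. The equation becomes
\[ h\bigl(f(z)\bigr)-h(z) = C\bigl(e^{g(z)}-e^{z}\bigr), \]
that is,
\[ h(z+Ce^z)-h(z)=Ce^{z}\bigl(e^{h(z)}-1\bigr). \tag{$*$} \]
The goal is to show that $h$ is constant. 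Once that is done, $(*)$ with $h\equiv D$ reads $0=Ce^z(e^{D}-1)$, which forces $e^{D}=1$ and $g(z)=z+D$. The alternative $g=f+D$ corresponds to $h=Ce^{z}+D$, so in general we must show that $h\in\{D,\;Ce^z+D\}$.

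\textbf{Step 1: $g$ has order at most $1$.} Apply Nevanlinna theory to $(*)$. Compare growth: $T(r,g\circ f)$ versus $T(r,f\circ g)$. The left side $f\circ g$ has $T(r,e^{g})$, which is of infinite order whenever $g$ is transcendental. We therefore need to bound $g\circ f$ via Pólya-type lemmas: $M(r,g\circ f)\ge M(cM(r/2,f),g)$, and $f$ has order $1$. If $\rho(g)=\rho<\infty$, then $\log T(r,g\circ f)\sim \rho\, r$ roughly, while $\log T(r,e^{g})\approx \log M(r,g)\approx r^{\rho}$. Matching these (Clunie's estimates for composites) should force $\rho(g)\le 1$. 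If $g$ is a polynomial, the result of Baker/Iyer quoted in the introduction already forces $g$ to be linear, $g(z)=\lambda z+d$ with $\lambda$ a root of unity, and substituting into $(*)$ gives $\lambda=1$ and $e^{d}=1$.

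\textbf{Step 2: determine $h$.} With $\rho(g)\le 1$, $h$ has order $\le1$. Write $e^{h}-1$ and use $(*)$: the right side is $Ce^{z}(e^{h}-1)$. If $h$ is nonconstant of order $\le1$, then $e^{h}$ has order $\ge 1$ as well, finite only if $h$ is a polynomial; finite order of the right side forces $\deg h\le 1$, or else we compare growth with the left side, whose order is governed by $h\circ f$. Next use a value-distribution argument. Zeros of $e^{h}-1$ are the points where $h\in 2\pi i\Z$, and at these points $h(f(z))=h(z)$, which gives many points where $h\circ f=h$. I would apply Borel's theorem on exponential sums: express $h$ as a combination of $z$ and $e^{z}$ by first showing that $h$ has the form $P(z)+Q(z)e^{z}$, using Hadamard factorization and the fact that the left side is a difference operator along $f$. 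Then substitute and compare coefficients of $e^{kz}$ and $e^{kCe^z}$-type terms. This leaves only $h=D$ or $h=Ce^z+D$, with $e^D=1$.

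\textbf{Main obstacle.} I expect the main obstacle to be the growth comparison in Steps 1–2, namely ruling out transcendental $h$ of order $\le 1$ other than $Ce^{z}+D$. Composite-growth estimates are delicate and need careful exceptional-set handling. My first attempt would be Clunie's inequality $T(r,g\circ f)\ge (1+o(1))\,T(c\,M(r/4,f),g)$, combined with $T(r,e^{g})$ compared against $M(r,g)$.
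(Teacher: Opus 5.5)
The paper gives no proof of this statement; it only cites Kobayashi's Nevanlinna-theoretic argument. Your proposal therefore has to stand on its own, and it has a genuine gap exactly where the theorem has content.

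Your normalization to $a=1$, the equation $(*)$ for $h=g-z$, the polynomial case via Baker--Iyer, and Step~1 are fine. Step~1 can even be done without Clunie. Take $z$ on $|z|=2r$ where $\operatorname{Re} g$ is maximal. Borel--Carath\'eodory and $|f(g(z))|\ge |C|e^{\operatorname{Re} g(z)}-M(2r,g)$ give $\log M(2r,f\circ g)\ge \tfrac12 M(r,g)-O(1)$. On the other side, $\log M(2r,g\circ f)\le \log M(M(2r,f),g)\le M(2r,f)^{\rho+\varepsilon}$. Hence $\log M(r,g)=O(r)$, and $g$ and $h$ have exponential type.

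Step~2, however, contains no argument. Exponential type still allows a huge family of candidates ($e^{\lambda z}$ for every $\lambda$, $\sin z$, $\int_0^{\tau}e^{\lambda z}\,d\mu(\lambda)$, \dots). Nothing you write reduces $h$ to the form $P(z)+Q(z)e^{z}$. Hadamard factorization only gives $h=z^{m}e^{\alpha z+\beta}\Pi(z)$ with a canonical product $\Pi$ over zeros you do not control. The phrase ``the left side is a difference operator along $f$'' merely restates $(*)$. Borel's theorem can be used only once $h$ is known to be an exponential polynomial. At that point the coefficient comparison would indeed give either $Q\equiv C$ and $e^{P}\equiv 1$, or $Q\equiv 0$ and $P\equiv D$ with $e^{D}=1$. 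So the decisive claim is simply assumed.

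The sentence ``finite order of the right side forces $\deg h\le 1$'' is also wrong. For transcendental $h$ the function $Ce^{z}(e^{h}-1)$ has infinite order, and this includes the genuine solution $h=Ce^{z}+D$. Polynomial $h$ is just the polynomial case of $g$, which you already treated. So your ``main obstacle'' is misidentified: the growth comparison is the easy part, and the rigidity step is what is missing.

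One way to supply it uses results quoted in this survey: $f(z)=z+Ce^{z}$ satisfies hypotheses (i)--(v) of Theorem~\ref{tw1}.
\begin{itemize}
\item It is prime by Theorem~\ref{gross-yang}.
\item It is not of the form $H\circ Q$ with $H$ periodic. Indeed $f$ is not periodic, and a non-constant periodic $H$ composed with $Q$ of degree at least $2$ has order at least $2$.
\item $f'=1+Ce^{z}$ has infinitely many zeros $c_k$, all simple because $f''=Ce^{z}\neq 0$, with pairwise distinct critical values $f(c_k)=c_k-1$.
\end{itemize}
Hence every nonlinear $g$ commuting with $f$ is $af^{n}+b$ with $n\ge 1$. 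Finite order forces $n=1$, since $f^{n}$ has infinite order for $n\ge 2$ by P\'olya's theorem. Then $f\circ(af+b)=(af+b)\circ f$ reduces to $e^{(a-1)f}\equiv ae^{-b}$, so $a=1$ and $e^{b}=1$. Together with your direct computation for linear $g$, and after undoing the normalization, this proves the theorem. Otherwise you would need a genuine value-distribution or Phragm\'en--Lindel\"of argument that determines $h$ itself, not just its growth.
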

\noindent

\noindent Observe that $f$ in Theorem \ref{1} satisfies $f''(z)-a^2 f(z) +a^2 z=0.$
 Also, $g(z)=f(z)+D$  satisfies the differential equation $g''(z)-a^2 g(z)+a^2(z+D)=0.$
 Motivated by this, Zheng and Zhou considered entire functions satisfying certain differential equations.  In order to state their result,  we need a definition. The lower order of a function $f$    is defined as
\[\rho(f)=\lim\underset{r\rightarrow \infty}{\inf}\frac{\log\, \log \, M(r, f)}{\log \, r}.\] 

\begin{theorem} \label{Diff_Zheng} (\cite{Zheng_Zhou}, Theorem 1)
\large{Let $f$ and $g$ be two permutable entire functions of finite order and $f$ is of positive lower order. Let $P_i(z) \ (i=0,1, \ldots , n+1$ with $n\geq 1$)  be polynomials not all identically zero. If $f(z)$ satisfies a differential equation 
$P_0(z)f^{(n)}(z)+ \cdots P_n(z)f(z)+P_{n+1}(z)=0,$
then there exist polynomials $Q_0(z), \ldots, Q_{n+1}(z)$ not all identically zero such that
$Q_0(z)g^{(n)}(z)+ \cdots Q_n(z)g(z) +Q_{n+1}(z) =0.$
}
\end{theorem}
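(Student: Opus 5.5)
The idea is to substitute $g$ into the equation satisfied by $f$ and to use the commutation $f\circ g = g\circ f$ to turn the result into a linear relation between $g$ and its derivatives. Replace $z$ by $g(z)$ in the equation for $f$:
\[
P_0(g)\,f^{(n)}(g)+\cdots+P_n(g)\,f(g)+P_{n+1}(g)=0 .
\]
Since $f(g(z))=g(f(z))$, the term $f(g)$ equals $g\circ f$. The derivatives $f^{(k)}(g)$ are also expressible through derivatives of $g\circ f$ and $g$: differentiating $f\circ g=g\circ f$ gives $f'(g)\,g'=g'(f)\,f'$, and repeated differentiation (Fa\`a di Bruno) writes $f^{(k)}(g)$ as a rational expression in $g^{(j)}(f)$, $f^{(j)}$ and $g^{(j)}$ for $j\le k$.

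This is not yet an ODE for $g$ alone, because of the compositions with $f$. The finite-order hypotheses are meant to remove them. By P\'olya's theorem, $g\circ f$ has finite order only if $g$ has order zero, or else $f$ is a polynomial. So the finite order of $g$, the positive lower order of $f$, and the identity $g\circ f=f\circ g$ force strong restrictions. I would first use Pólya-type growth comparisons on $M(r,f\circ g)=M(r,g\circ f)$ to show that $g$ has order $0$, or that $T(r,g)$ is comparable to $T(r,f)$ in a very rigid way. In the cases of interest this should yield a representation $g=\alpha f+\beta$, or more generally a polynomial relation $F(f,g)=0$ of bounded degree. The model is Kobayashi's Theorem~\ref{1}, where $g=f+D$ or $g=z+D$. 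Where $g$ is of order zero, I would instead use the small growth of $g$ to get $T(r,g)=S(r,f)$.

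Once an algebraic relation between $f$ and $g$ with polynomial coefficients is available, the conclusion follows by elimination. Differentiate the relation $n$ times to express $f,\dots,f^{(n)}$ in terms of $g,\dots,g^{(n)}$ and $z$, substitute into the ODE for $f$, and clear denominators. To land on a \emph{linear} equation with polynomial coefficients, as the theorem states, one needs the relation to be essentially affine, $g=\alpha f+\beta(z)$ with $\beta$ a polynomial. Then $g$ satisfies $P_0 g^{(n)}+\cdots+P_n g+\bigl(\alpha P_{n+1}-\textstyle\sum_k P_{n-k}\beta^{(k)}\bigr)=0$, so one takes $Q_i=P_i$ for $i\le n$ and $Q_{n+1}=\alpha P_{n+1}-\sum_{k=0}^{n}P_{n-k}\beta^{(k)}$, with the constant-$g$ or order-zero cases handled separately. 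Indeed, if $g$ is a polynomial, Baker--Iyer forces $g(z)=e^{2s\pi i/p}z+d$, and then $g'-e^{2s\pi i/p}=0$ trivially.

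The main obstacle is the growth step: showing from finite order and positive lower order that $g$ must be of this affine or polynomial form, or otherwise linearly tied to $f$ modulo small functions. For this I would use Nevanlinna theory: compare $T(r,f\circ g)$ and $T(r,g\circ f)$ via Clunie's inequalities, then apply a Borel-type uniqueness argument to the exponential-type terms coming from the ODE.
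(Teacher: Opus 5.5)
The survey only cites this theorem from Zheng--Zhou and gives no proof, so your argument has to stand on its own. It has a genuine gap at exactly the step you flag, and the target you set for that step is false.

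By your own account, elimination yields a \emph{linear} equation only when $g=\alpha f+\beta$ with $\beta$ a polynomial, or when $g$ is a polynomial. The polynomial case is trivial anyway: take $Q_n=1$, $Q_{n+1}=-g$. Now put $u=\frac{\pi}{2}z^2$, $f(z)=\sin u$ and $g(z)=\cos u=-\sin\bigl(u-\frac{\pi}{2}\bigr)$; compare the alternative $g=-\sin(P(z)-c)$ in case (ii)(b) of the paper's theorem on $\sin P(z)$. Then
\[
g(f(z))=\cos\bigl(\tfrac{\pi}{2}\sin^2u\bigr)=\sin\bigl(\tfrac{\pi}{2}\cos^2u\bigr)=f(g(z)).
\]
Both functions have order and lower order $2$, and $f$ satisfies $zf''-f'+\pi^2z^3f=0$. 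Yet $\cos u-\alpha\sin u$ is transcendental for every $\alpha\in\mathbb{C}$, so no relation $g=\alpha f+\beta$ with polynomial $\beta$ exists. The only algebraic relation here is $f^2+g^2=1$, and eliminating $f$ through it gives an algebraic, not a linear, differential equation for $g$. In this example $g$ does happen to satisfy the same linear equation as $f$, but nothing in your plan shows that. So neither branch of your strategy reaches the conclusion: identifying $g$ as an affine function of $f$ is a stronger claim than the theorem, and it is false in general.

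Your growth input is also misapplied. P\'olya's theorem needs $g\circ f$ to have finite order. Under these hypotheses, if $g$ is transcendental then $f\circ g=g\circ f$ \emph{must} have infinite order, since otherwise P\'olya would force $f$ to have order zero, contradicting positive lower order. So P\'olya gives no restriction at all on $g$; already $g=f$ is admissible. What is actually missing is an argument that starts from the relation in your first paragraph,
\[
\sum_{j=0}^{n}A_j(z)\,g^{(j)}(f(z))+A_{n+1}(z)=0,
\]
where the $A_j$ are built from $f$, $g$ and their derivatives at $z$. Using the finite order of $f$ and $g$ and the positive lower order of $f$, it must extract a nontrivial relation $\sum_{i=0}^{n}Q_i(w)\,g^{(n-i)}(w)+Q_{n+1}(w)=0$ with polynomial coefficients in the variable $w$ of $g$. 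Your proposal gives no argument for this step, and it cannot be done by first classifying $g$ in terms of $f$.
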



\noindent\large{Above theorem basically ensures that permutability and growth restrictions on one of the function which satisfies some non-trivial linear differential equation with polynomial coefficents forces the other function also to satisfy a differential equation of similar vein with possibly different coefficients. For example, consider $f(z)=\sin z$ and $g(z)=-z.$ Then $f$ commutes with $g$. $f$ is of finite lower order.  $f$ satisfies $f''(z)+f=0,$ while $g$ satisfies $g''(z)=0.$ }
\noindent \large{The next theorem,  a generalisation of  Theorem \ref{1} determines all entire maps commuting with $Q+Re^P$, where $Q, R$ and $P$ are polynomials satisfying some conditions.}
\begin{theorem}
\large({\cite{Zheng_Zhou}, Theorem 3)
Let $f=Q+Re^P$, where $Q$ and $R$ are polynomials, $R$ not identically $0$  and $P$ is a non-constant polynomial of degree $n.$ Let $g$ be a non-linear entire function of finite order that commutes with $f$. Then $g=Af+B$, where $A^{n}=1, \ B= \frac{p_{n-1}(A-1)}{np_n},$ and  $\ p_n, \ p_{n-1}$ are coefficients of the leading and second leading coefficient of $P,$ respectively. 
}
\end{theorem}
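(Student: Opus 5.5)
The plan is to first pin down the growth and shape of $g$, then use the commutation relation to identify $g$ as an affine function of $f$. Since $f=Q+Re^P$ with $\deg P=n\ge1$, $f$ has order $n$ and positive lower order. It also satisfies a first-order linear differential equation with polynomial coefficients:
\[
R f' - (R'+RP')f + (R'+RP')Q - RQ' = 0 .
\]
Theorem \ref{Diff_Zheng} therefore applies, and $g$ satisfies a linear differential equation with polynomial coefficients. That alone does not fix $g$, so I will also use growth comparisons.

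\textbf{Step 1: determine the order of $g$.} Consider the commutation $f(g)=g(f)$. The left side is $Q(g)+R(g)e^{P(g)}$. By Pólya-type comparisons of orders of compositions, $\rho(g\circ f)=\infty$ unless $\rho(g)=0$, because $\rho(f)>0$ and $f$ is transcendental. The left side has order at most $\infty$, and it is finite only if $g$ is a polynomial. Since $g$ is of finite order and non-linear, I will exclude the case where $g$ is a non-linear polynomial using the Baker–Iyer result quoted in the introduction: a polynomial commuting with a transcendental entire function is linear. So $g$ is transcendental. Compare $T(r,f\circ g)$ with $T(r,g\circ f)$ via Clunie's lemma, $T(r,g\circ f)\asymp \log M(M(r,f),g)$-type bounds. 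This forces both sides to have comparable growth, and from it I expect to conclude that $g$ has order $n$ and type comparable to $f$. In particular $g$ has positive lower order.

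\textbf{Step 2: show $g=Af+B$, the key step and the main obstacle.} I would study $H=g-Af$ for a suitable constant $A$ and show that $H$ is a polynomial, then a constant. One route uses the value distribution of $f$ along rays. On the sectors where $\operatorname{Re}P\to-\infty$, $f\approx Q$ is polynomial-like. On the sectors where $\operatorname{Re}P\to+\infty$, $f$ is huge. Comparing $f(g(z))$ and $g(f(z))$ on these sectors, via a Wiman–Valiron or Nevanlinna argument as in Kobayashi's proof of Theorem \ref{1}, I would show that $g$ has the form $\tilde Q+\tilde R e^{\tilde P}$ with $\tilde P=AP+c$ type relations. Writing $g=S+Te^{U}$ and substituting into the commutation, Borel's theorem on linear independence of exponentials of polynomials matches the exponents. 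The result would be $e^{P(g)}$ against $e^{U(f)}$, and this forces $U$ to be linear with $g$ and $f$ sharing the exponential term up to a factor. This exponent matching is where I expect the real difficulty, and I would follow Kobayashi's strategy closely.

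\textbf{Step 3: pin down the constants.} Once $g=Af+B$ is established, substitute into $f(Af+B)=Af(f)+B$. For this to hold with the transcendental term $R(Af+B)e^{P(Af+B)}$, we need $P(Aw+B)-P(w)$ to be constant in $w$. Expanding the top two coefficients gives $p_nA^n=p_n$, so $A^n=1$. It also gives $p_n nA^{n-1}B+p_{n-1}A^{n-1}=p_{n-1}$. Multiplying by $A$ and using $A^n=1$ yields $B=\frac{p_{n-1}(A-1)}{np_n}$, as stated.
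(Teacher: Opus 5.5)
The survey only quotes this result from \cite{Zheng_Zhou} and gives no proof, so there is no argument in the paper to compare yours with. Taken on its own terms, your proposal has a genuine gap exactly where the theorem's content lies.

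Step 2 is a plan rather than an argument. You assert that $g$ has the shape $S+Te^{U}$, or that $g-Af$ is a polynomial, but give no mechanism that produces this. ``Following Kobayashi'' does not supply one, since Theorem \ref{1} treats only $z+Ce^{az}$. The shape is not available from Theorem \ref{Diff_Zheng} either: applied to your first-order equation it gives only $Q_0g'+Q_1g+Q_2=0$ with polynomial $Q_j$. Entire solutions of such equations need not be a polynomial plus a polynomial times the exponential of a polynomial; for example, $w=e^{z^2}\int_0^z e^{-t^2}\,dt$ solves $w'-2zw=1$.

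Even granting $g=S+Te^{U}$, the exponents $P(S+Te^{U})$ and $U(Q+Re^{P})$ in $Q(g)+R(g)e^{P(g)}=S(f)+T(f)e^{U(f)}$ are transcendental. So Borel's theorem for polynomial exponents does not apply. You would need the small-function version of Borel's lemma, followed by an actual comparison yielding $Te^{U}=ARe^{P}$ and $S=AQ+B$. Note that the exponent must come out as $P$ plus a constant, not $AP+c$.

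One way to get real rigidity is to use the first-order equations. Write $f'=af+b$ and $g'=\alpha g+\beta$ with $a,b,\alpha,\beta$ rational, and set $F=f\circ g=g\circ f$. Differentiating $F$ both ways gives
\[
F\,\bigl(a(g)g'-\alpha(f)f'\bigr)=\beta(f)f'-b(g)g'.
\]
By Clunie's theorem, $T(r,F)/T(r,f)\to\infty$ and $T(r,F)/T(r,g)\to\infty$. Both $a(g)g'-\alpha(f)f'$ and the right-hand side have characteristic $O(T(r,f)+T(r,g))$, so both vanish identically. Identities of this kind are what Step 2 has to be built on, and extracting $g=Af+B$ from them is where the real work lies. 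The same equation for $g$, combined with Wiman--Valiron, also settles Step 1. There you only ``expect'' $\rho(g)=n$: P\'olya's inequality gives $\rho(g)\le n$, but you give no argument for the lower bound.

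Step 3 is correct for $n\ge 2$ but not for $n=1$. From $f(Aw+B)=Af(w)+B$ you correctly deduce that $P(Aw+B)-P(w)$ is constant. When $n=1$, however, the coefficient of $w^{n-1}=w^{0}$ \emph{is} that constant and need not vanish, so only $A=1$ follows.

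In fact the statement as transcribed is false for $n=1$. By Theorem \ref{1}, $g=f+2\pi i/a$ commutes with $f(z)=z+Ce^{az}$: both $f\circ g$ and $g\circ f$ equal $f+Ce^{af}+2\pi i/a$. The formula, however, forces $A=1$ and $B=p_0(A-1)/p_1=0$. So the theorem needs $n\ge 2$. For $n=1$, $B$ is instead constrained by $R(w+B)e^{p_1B}=R(w)$ and $Q(w+B)=Q(w)+B$. Your computation of $B$ must therefore use $n\ge 2$ explicitly.
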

As an illustration of above theorem, we consider $f(z)=ze^{z^2}$.  Here $A=-1$ and $B=0$  \, as $p_{n-1}=0. \,$  This implies $g(z)=-ze^{z^2}.$  $\rho(g)=2.$ It is easy to see that $f\circ g=g\circ f=-ze^{z^2}e^{{z^2}e^{2z^2}}.$ Also, if we consider $f(z)=ze^{z^4},$ then $A^4=1$ gives non-trivial choice for $A$ to be $-1, i$ and $-i,$ $B=0$. One can check easily that each of the three choices of $g$ namely, $-f, i\,f $ and $-i\,f$ commutes with $f.$\\ We now show that not all non-trivial choices for $A$ satisfying $A^n=1$ can give different choices of $g$ that commutes with  $f$ as in above theorem. For this, consider $f(z)=z^3\, e^{z^4}.$ One can easily check that out of the three non-trivial possibilites  $-1, i$ and $-i$ of $A,$ only the possibility $g=-f$ commutes with $f.$\\
  More generally, one can take $f(z)=z^{2n+1}e^{z^{2m}}$ where $n, m \geq 1.$ Here, $A^{2m}=1$ and $B=0.$ One can easily check that  $g= -f$ commutes with $f$ because $f\circ g(z)=-f^2(z),$ and $g\circ f(z)=-f^2(z).$\\ Note that, all the functions considered in above examples were odd functions. Infact, we have the following observation:

\begin{remark}\label{remark1}
If $f$ is an odd function, then $f$ commutes with $-f,$  for $f(-f(z))=-f(f(z))=-f^2(z).$ We shall use this fact at several places in this paper.
\end{remark}

\noindent\large{The following theorem determines all entire functions that commutes with $\operatorname{sin} P(z)$ for every polynomial $P.$ }
\begin{theorem}
\large{(\cite{Zheng_Zhou}, Theorem 4)
Let $f(z)=\operatorname{sin} P(z)$ where $P$ is a non-constant polynomial. Let $g$ be a non-linear, finite order, entire function commuting with $f.$ Then, 
\begin{enumerate}[label=(\roman{*})]
\item{if $f(z)$ is an odd function, then $g(z)=f(z)$ or $g(z)=-f(z);$}
\item{if $f(z)$ is not an odd function, then  

	\begin{enumerate}
\item{when $\operatorname{deg}(P)=1,\ g(z)=f(z),$} 
\item{when  $\operatorname{deg}(P)>1$, either $g(z)=f(z)$ or $g(z)=-\operatorname{sin}(P(z)-c),$ where $c$ satisfies $e^{inc}=(-1)^{n+1}, \ n=\operatorname{deg}(P)$ and $P(\operatorname{sin}(-z+c))=P(\operatorname{sin} z)+c$.}
	\end{enumerate}
}
\end{enumerate}
}
\end{theorem}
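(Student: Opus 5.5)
Our plan is to reduce the theorem to the earlier Zheng--Zhou differential equation result, Theorem \ref{Diff_Zheng}, combined with a growth comparison. Put $n=\deg P$ and $f=\sin P$. Then $\rho(f)=n$, and the lower order of $f$ is also $n>0$, so Theorem \ref{Diff_Zheng} applies: $f$ satisfies $f''-\frac{P''}{P'}f'+P'^2 f=0$ (after clearing $P'$, the coefficients are polynomials), and consequently $g$ satisfies a linear differential equation with polynomial coefficients. By the standard Wiman--Valiron estimates, every transcendental solution of such an equation has finite rational order and completely regular growth. Hence $\rho(g)$ is finite and positive rational. The easier possibility is that $g$ is a polynomial. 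This cannot happen: a non-linear polynomial cannot commute with a transcendental function by the result of Baker and Iyer quoted in the introduction, and $g$ is non-linear by hypothesis. So $g$ is transcendental.

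Next we compare growth on both sides of $f\circ g=g\circ f$ to pin down the shape of $g$. Using Pólya-type inequalities $M(r,f\circ g)\ge M(cM(r/2,g),f)$ and the finite order of $g$, we get $\log M(r,f\circ g)\approx M(r,g)^{n}$. On the other hand, $\log M(r,g\circ f)\approx M(r,f)^{\rho(g)}$. Equating these forces $M(r,g)$ to be comparable to $M(r,f)^{\rho(g)/n}$, and a bootstrap then gives $\rho(g)=n$. Writing $g=H\circ P$ type expressions, or more directly using the Nevanlinna counting function of zeros and the fact that $f(z)=\pm1$ exactly on $P(z)\in \pi/2+\pi\mathbb Z$ (all of these points being multiplicity-type critical values), we next aim to show that $g$ has the form $g=A\sin(P(z)+c')+B$, i.e. $g=\alpha\sin(\beta P+\gamma)+\delta$. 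The concrete route we would try first is this: from $f\circ g=g\circ f$ we see that $g$ maps the preimage set $f^{-1}(\{\pm1\})$ into $f^{-1}(g(\{\pm1\}))$, and we use a Borel/Nevanlinna uniqueness argument (Borel's theorem on exponential sums after writing $\sin w=(e^{iw}-e^{-iw})/2i$) to conclude that $g$ is an exponential polynomial in $e^{\pm iP}$ of the same degree, hence $g=a\sin P+b\cos P+d$.

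Finally we substitute $g=a\sin P+b\cos P+d$ into $\sin P(g(z))=g(\sin P(z))$. Comparing the exponential polynomial expansions of both sides, again by Borel's theorem, forces $d=0$ and $a^2+b^2=1$, so $g=\pm\sin(P-c)$, together with the conditions $e^{inc}=(-1)^{n+1}$ and $P(\sin(-z+c))=P(\sin z)+c$. We then split into cases. If $f$ is odd (so $P$ is odd up to this structure), the solutions reduce to $g=\pm f$, using Remark \ref{remark1}. If $\deg P=1$, direct computation leaves only $g=f$. If $\deg P>1$ and $f$ is not odd, we obtain exactly the two listed alternatives. The main obstacle is the middle step: showing that $g$ must be a trigonometric polynomial in $P$ of the same shape. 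Here the finite-order hypothesis and the Borel-type lemma are essential, and we expect most of the technical work to lie in controlling the growth of the possible extra factors of $g$.
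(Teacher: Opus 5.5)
The survey only quotes this theorem from Zheng--Zhou and gives no proof, so your outline has to stand on its own, and it does not. The real content lies in the step you call the main obstacle, namely showing $g=a\sin P+b\cos P+d$, and none of the tools you propose delivers it.

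Your preliminary steps are fine. $g$ is transcendental by Baker--Iyer. Theorem \ref{Diff_Zheng} together with Wiman--Valiron gives $g$ a positive rational order equal to its lower order. The P\'olya-type bounds then give $\log M(r,g)\asymp r^n$, hence $\rho(g)=n$. They do not give the sharper comparability of $M(r,g)$ with $M(r,f)^{\rho(g)/n}$ that you state, because of the $r/2$ in P\'olya's inequality.

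Information of this kind cannot single out the required shape: $e^{z^n}$ and $\sin(2P)$ also have order and lower order $n$ and satisfy linear differential equations with polynomial coefficients. The other proposed tools fail as well:
\begin{itemize}
\item The observation that $g$ maps $f^{-1}(\{\pm1\})$ into $f^{-1}(g(\{\pm1\}))$ is just $g(f^{-1}(w))\subset f^{-1}(g(w))$. This holds for every $w$ and says nothing about $g$.
\item Borel's theorem needs an identity $\sum c_je^{h_j}\equiv0$. The side $\sin P(g)$ is of that type, but $g(\sin P)$ is an exponential sum only once you already know $g$ is a trigonometric polynomial in $P$. Invoking Borel at this stage is therefore circular.
\item Nothing in the outline shows that $g$ factors through $P$ or through $e^{iP}$.
\end{itemize}

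What is missing is an argument that extracts structure of $g$ from the functional equation itself. The fact you quote about $\pm1$ is the right input, but it must be used through $f\circ g$, not through $f^{-1}$.

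Every $\pm1$-point of $f\circ g=\sin(P\circ g)$ is multiple, because $\cos(P(g))=0$ there. Now suppose $g(t)=\pm1$ and $g'(t)\neq0$, and pick $z$ with $\sin P(z)=t$. Then $(g\circ f)'(z)=g'(t)f'(z)$ must vanish, so $\cos P(z)\,P'(z)=0$. Hence $t=\pm1$ or $t$ lies in the finite set $\sin P(\{P'=0\})$.

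So all but finitely many zeros of $1-g^2$ are multiple, and $H=g'^2/(1-g^2)$ has only finitely many poles. The lemma on the logarithmic derivative (this is where the finite order of $g$ enters) gives $T(r,H)=O(\log r)$. Hence $H$ is rational and $g'^2=H(1-g^2)$. This is genuine structural information; for instance, it already rules out $d\neq0$ or $a^2+b^2\neq1$ in your ansatz. It still has to be integrated and fed back into $f\circ g=g\circ f$.

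Once the form of $g$ is known, your final Borel comparison is sound. With $u=e^{iP}$, both sides involve only $e^{\pm i\Lambda(u)}$, $e^{\pm i\Lambda_2(u)}$ and $1$, where $P(g)=\Lambda(u)$ and $P(\sin P)=\Lambda_2(u)$ are Laurent polynomials in $u$. Two points are skipped there, however:
\begin{itemize}
\item The branch $g=\sin(P+c)$ with $P(\sin(w+c))=P(\sin w)+c$ still has to be reduced to $g=f$. Averaging over a period gives $c=0$.
\item In the odd case, Remark \ref{remark1} only shows that $-f$ does commute with $f$. Excluding the other values of $c$ must come from the same analysis.
\end{itemize}
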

\noindent\large{As an illustration of the first situation in above theorem, we consider $f(z)=\sin z^3.$ Using Remark \eqref{remark1}, $g(z)=-\sin z^3$ commutes with $f.$}

%

\noindent \large{The forthcoming results proved by Zheng and Yang during 1987-1990, discuss  permutable entire functions that are periodic.}

\begin{theorem} \label{per_1}
\large{(\cite{Zheng}, Theorem 1)
Let $f$ and $g$ be two entire functions of order less than $\frac{1}{2}$ and $h$ be entire. If $h$ is permutable with $f(e^{z})+g(e^{-z}),$ then 
$h(z)=(\frac{k}{s})z + l(z)$, where both $s$ and $k$ are integers with $s>0$ and $l(z)$ is periodic entire function with period $2s\pi i.$
}
\end{theorem}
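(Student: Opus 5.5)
Write $F(z)=f(e^z)+g(e^{-z})$. Since $f$ and $g$ have order less than $\tfrac12$, the function $F$ is periodic with period $2\pi i$ and, in terms of $w=e^z$, is of the form $\Phi(w)=f(w)+g(1/w)$ on $\C^*$. I would prove the statement in three steps. First, show that $h$ grows at most linearly along suitable curves. Second, deduce that $h(z+2\pi i)-h(z)$ is a constant. Third, read off the stated form of $h$.

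\textbf{Step 1.} Use the $\cos\pi\rho$-type (Wiman) theorem. Because $f$ and $g$ have order $<\tfrac12$, there are sequences $r_n\to\ity$ such that $\min_{|w|=r_n}|f(w)|\to\ity$ faster than any power of $\log r_n$, and similarly for $g$. Translated to $z$, on the vertical lines $\operatorname{Re} z=\log r_n$ (and on lines $\operatorname{Re} z=-\log r_n'$ on the left) we get $|F(z)|$ uniformly large, while $F$ stays bounded on a strip $|\operatorname{Re} z|\le C$. The commutation $h(F(z))=F(h(z))$, combined with Pólya-type growth comparison for compositions, should first give that $h$ has order zero relative to the exponential scale. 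Then, using the large minimum modulus lines, we obtain that $h$ is of at most linear growth in $\operatorname{Re}$-direction: $|h(z)|=O(|z|)$ on a family of curves. I expect this step to be the main obstacle: controlling $h$ via $F$ with only order information. My first attempt would use the three-circle/Phragmén–Lindelöf argument in the strips between consecutive lines $\operatorname{Re} z=\log r_n$.

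\textbf{Step 2.} From $h\circ F=F\circ h$ and $F(z+2\pi i)=F(z)$ we get $F(h(z+2\pi i))=F(h(z))$ for all $z$. Hence $\phi(z)=h(z+2\pi i)-h(z)$ takes values in the set of periods-like differences $\{a-b: F(a)=F(b)\}$. I would show that $F(u)=F(v)$ with $u-v$ bounded forces $u-v\in 2\pi i\Z$ for $u,v$ far out. This uses the structure of $\Phi$ near $0$ and $\ity$: $\Phi$ is essentially injective modulo the rotation only if the order is small, and the $\cos\pi\rho$ lines again supply the points. By the growth bound of Step 1, $\phi$ is entire of controlled growth and takes values in the discrete set $2\pi i\Z$ on a set with accumulation points. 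Hence $\phi\equiv 2\pi i k'$ for some integer $k'$.

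\textbf{Step 3.} Put $l(z)=h(z)-\frac{k'}{1}\,z$. Then $l$ is $2\pi i$-periodic, which gives the form with $s=1$. To obtain the general $\frac ks$ and period $2s\pi i$ as stated, run Step 2 with an arbitrary minimal period $2s\pi i$ of the relation. That is, take the least $s\ge1$ for which $h(z+2s\pi i)-h(z)$ is constant, equal to $2k\pi i$. Then $l(z)=h(z)-\frac ks z$ has period $2s\pi i$.
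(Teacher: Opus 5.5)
The survey only cites Zheng for this result, so I am judging your argument on its own. It has a genuine gap: it never shows that $h(z+2s\pi i)-h(z)$ is constant for some $s\ge 1$, and that is the whole content of the statement.

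\emph{Step 2.} Your key lemma is false. For $F(z)=e^{2z}+e^{-2z}$ (i.e.\ $f(w)=g(w)=w^2$) we have $F(u)=F(u+\pi i)$ for every $u$. So points with equal $F$-value and bounded difference need not differ by an element of $2\pi i\Z$, however far out they lie. Nothing in your argument forces $h(z+2\pi i)-h(z)$ to be constant.

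\emph{Step 3.} This step is circular. ``Take the least $s$ for which $h(z+2s\pi i)-h(z)$ is constant'' presupposes that such an $s$ exists.

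\emph{Step 1.} No useful growth restriction on $h$ can be expected. With $F(z)=e^z$, the function $h=F\circ F=\exp(e^z)$ commutes with $F$. The proposal also never says which curves are meant or how the bound would feed into Step 2. In fact no growth information on $h$ is needed at all.

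The missing idea is a finiteness and pigeonhole argument.

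\emph{Setup.} Write $F=\Phi\circ\exp$ with $\Phi(\zeta)=f(\zeta)+g(1/\zeta)$, and put $u_j(z)=e^{h(z+2j\pi i)}$. Commutation and periodicity give $\Phi(u_j(z))=F(h(z+2j\pi i))=h(F(z))$ for all $j\in\Z$.

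\emph{Key observation.} $h\circ F$ is $2\pi i$-periodic, hence bounded by some $R$ on any vertical line $L$.

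\emph{Where order $<\frac12$ enters.} This is where your minimum-modulus circles belong. The $\cos\pi\rho$ theorem gives circles $|\zeta|=r$ accumulating at $0$ and at $\ity$ on which $|\Phi|>R$. The connected set $u_0(L)$ lies in $\{|\Phi|<R\}$, hence in a single annulus $A$ between two such circles, with $\overline A\subset\C\setminus\{0\}$. Because $L+2\pi i=L$, every $u_j(z)$ with $z\in L$ also lies in $A$.

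\emph{Counting.} By Rouch\'e, $\Phi=c$ has at most $N$ solutions in $A$ for $|c|<R$, where $N$ is the number of zeros of $\Phi$ in $A$. So for each $z\in L$, two of $u_0(z),\dots,u_N(z)$ coincide.

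\emph{Conclusion.} Some fixed pair $j_1<j_2$ works on an uncountable subset of $L$, so $u_{j_1}\equiv u_{j_2}$ by the identity theorem. With $s=j_2-j_1$, $h(z+2s\pi i)-h(z)$ is then continuous with values in $2\pi i\Z$, hence a constant $2k\pi i$. Then $l(z)=h(z)-\frac ks z$ has period $2s\pi i$. The integer $s$ comes out of the pigeonhole principle, not from a minimality choice.

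If $f$ or $g$ is constant, $\Phi$ is holomorphic at $\ity$ or at $0$, and that end of $A$ is handled by counting solutions in a disc. One also needs $F$ to be nonconstant.
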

The following result which is a  consequence of above theorem provides a complete set of entire functions that are permutable with a periodic entire function of finite order.

\begin{theorem}
\large{(\cite{Zheng}, Corollary)
Let $f(z)$ be a periodic entire function of finite order and let
$g(z)$ be entire, permutable with $f(z)$. Then  $g(z)=(\frac{k}{s})z + l(z)$, where both $s$ and $k$ are integers with $s>0$ and $l(z)$ is periodic with period $2s\pi i.$
}
\end{theorem}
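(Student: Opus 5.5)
We will deduce this corollary from Theorem \ref{per_1}. The plan is to put a periodic entire function of finite order into the form $F(e^{z})+G(e^{-z})$, with $F,G$ entire of order less than $\frac12$ (in fact of order $0$), after a linear change of variable.

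\textbf{Step 1: normalise the period.} Let $f$ have period $\tau\neq 0$ and put $\lambda=\tau/(2\pi i)$ and $\tilde f(z)=f(\lambda z)$, so that $\tilde f$ has period $2\pi i$. Writing $L(z)=\lambda z$, a function $g$ commutes with $f$ exactly when $\tilde g=L^{-1}\circ g\circ L$ commutes with $L^{-1}\circ f\circ L$. This last map is not $\tilde f$ itself but $\lambda^{-1}\tilde f$. It still has period $2\pi i$ and finite order, so we work with $f_1=\lambda^{-1}\tilde f$.

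\textbf{Step 2: Laurent decomposition.} Since $f_1$ is entire with period $2\pi i$, we can write $f_1(z)=\Phi(e^{z})$ with $\Phi$ holomorphic on $\C\setminus\{0\}$. Its Laurent expansion splits as $\Phi(w)=F(w)+G(1/w)$, where $F$ and $G$ are entire.

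\textbf{Step 3: growth of $F$ and $G$ (main obstacle).} We need $\rho(F),\rho(G)<\frac12$. Let $\rho(f_1)=\rho<\infty$, and take $|w|=R$ with $w=e^{z}$, so $|z|\approx\log R$ for $z$ with imaginary part in $[0,2\pi]$. Then
\[\max_{|w|=R}|\Phi(w)|\le \exp\bigl((\log R+2\pi)^{\rho+\epsilon}\bigr).\]
The Cauchy estimates on the Laurent coefficients $a_k$ of $\Phi$, applied on the circle $|w|=R$ for $k>0$ and on $|w|=1/R$ for $k<0$, give
\[|a_k|\le R^{-k}\exp\bigl((\log R+2\pi)^{\rho+\epsilon}\bigr).\]
This yields
\[\log M(R,F)=O\bigl((\log R)^{\rho+\epsilon}\bigr),\]
and the same for $G$. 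Hence $F$ and $G$ have order $0$. The delicate point is making the tail estimate for the Laurent series uniform; we will handle it by bounding $M(R,F)$ by a sum over $k$ using the coefficient estimate at radius $2R$. We also note that $F$ or $G$ might be constant, or $f_1$ might be constant or linear. These degenerate cases are compatible with the theorem, or can be dealt with directly, provided they are allowed by the hypotheses of Theorem \ref{per_1}.

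\textbf{Step 4: apply the theorem and transport back.} Theorem \ref{per_1} gives
\[\tilde g(z)=\tfrac{k}{s}z+l(z),\]
with $l$ of period $2s\pi i$. Conjugating back,
\[g(z)=\lambda\tilde g(z/\lambda)=\tfrac{k}{s}z+\lambda\, l(z/\lambda),\]
whose periodic part has period $s\tau$. This matches the statement, read with the period normalised to $2\pi i$ as in the statement. If the statement is meant literally with period $2s\pi i$, we assume that $f$ has period $2\pi i$ from the outset, in which case $\lambda=1$ and the conclusion is immediate.
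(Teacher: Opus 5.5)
Your proof is correct. It has the same skeleton as the paper's proof: normalise the period to $2\pi i$, write $f=F(e^{z})+G(e^{-z})$, show that $F$ and $G$ have order zero, and apply Theorem \ref{per_1}. The real difference is the growth step.

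The paper bounds $M(r,f)$ from below by $M(r,F(e^{z}))$ minus a bound for the $G$-term. It then cites P\'olya's theorem on compositions to get $\rho(F)=0$, and likewise $\rho(G)=0$. You instead apply Cauchy's estimates to the Laurent coefficients of $\Phi$ on $|w|=2R$ and $|w|=1/(2R)$ and sum the series. This gives $\log M(R,F),\,\log M(R,G)=O\bigl((\log R)^{\rho+\epsilon}\bigr)$ directly.

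Your version is elementary and self-contained, and it treats $F$ and $G$ at once. It also avoids the cross term in the paper's inequality. That inequality holds, but it tacitly uses the fact that the maximum of $|F(e^{z})|$ on $|z|=r$ is attained where $\operatorname{Re} z>0$, since $|G(e^{-z})|$ is controlled by $M(1,G)$ only on $\operatorname{Re} z\ge 0$. The paper's version is shorter, at the cost of citing P\'olya.

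You are also more careful than the paper about the normalisation. Conjugating by $z\mapsto\lambda z$ produces $\lambda^{-1}f(\lambda z)$, not $f(\lambda z)$. For a general period $\tau$, the periodic part of $g$ has period $s\tau$, so ``$2s\pi i$'' is literally right only when $\tau=2\pi i$.

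Two small fixes:
\begin{itemize}
\item For $k<0$ the coefficient bound should read $|a_k|\le R^{-|k|}\exp\bigl((\log R+2\pi)^{\rho+\epsilon}\bigr)$.
\item Constant $f$ is not a harmless degenerate case but a genuine exception. Any $g$ with $g(c)=c$ commutes with $f\equiv c$, for example $g(z)=(z-c)^2+c$. So non-constancy of $f$ must be assumed. After that, $f$ cannot be linear, and the only remaining degeneracy, one of $F,G$ being constant, is indeed harmless.
\end{itemize}
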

We give the  proof of above theorem:
\begin{proof}
Suppose the period of $f$ is $c\neq 0.$ Then, by a change of variable, we may assume without loss of generality that the period is $2\pi i$. Thus, the function $f$ can be written as $f(z)=F(e^z)+G(e^{-z}),$ where $F$ and $G$ are entire functions. In order to apply Theorem \ref{per_1}, we must show that $F$ and $G$ are of zero order (and hence will have order less than $\frac{1}{2}.$) Now, $M(r, f)=M(r, F(e^z)+G(e^{-z}))\geq M(r, F(e^z))-M(1, G(e^{-z}))$ for $r$ sufficiently large. By Polya's result \cite{polya}, we conclude that $F$ is of zero order. Arguing on similar lines, $G$ is also of zero order. Thus, hypothesis of Theorem \ref{per_1} is satisfied. Hence, the function $g$ has the required form.
\end{proof}

\noindent Zheng suspected $k$ to be zero in above theorem and posed a question whether this is always the case or not.\\
The following result by Zheng is a special case of Theorem \ref{per_1} when $f$ and $g$ are polynomials.
\begin{theorem}
\large{(\cite{Zheng}, Theorem 2)
Let P(z) and Q(z)  be polynomials at least one of which is not a constant and $f(z)=P(e^z) + Q(e^{-z}).$ Let $g$ be any non-linear entire function of finite order commuting with $f.$ 
\begin{enumerate}
\item[(i)] {If $P(z) \not \equiv -Q(z) $, then $g=f;$}
\item[(ii)] {If $P(z) = -Q(z),$ for all $z$ then $g=f$ or $g=-f$.}
\end{enumerate}
}
\end{theorem}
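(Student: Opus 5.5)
Since $P,Q$ are polynomials, they have order $0<\frac12$, so Theorem \ref{per_1} applies to $f(z)=P(e^z)+Q(e^{-z})$. We get
\[g(z)=\tfrac{k}{s}z+l(z),\]
with $k,s\in\Z$, $s>0$, and $l$ entire with period $2s\pi i$. The plan is to substitute this form into $f\circ g=g\circ f$ and compare growth and Fourier-type expansions in $e^{z/s}$.

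\textbf{Step 1: $l$ is a Laurent polynomial in $e^{z/s}$.} Write $w=e^{z/s}$. Then $f(z)=P(w^s)+Q(w^{-s})$ is a Laurent polynomial in $w$, and $l(z)=L(w)$ with $L$ analytic on $\C\setminus\{0\}$. Since $g$ has finite order, $L$ must have only finitely many terms. The justification runs through a Pólya/Clunie-type estimate like the one used in the proof of the Corollary above: a function $L(e^{z/s})$ of finite order forces $L$ to have order zero in $w$ and $1/w$, and one then bootstraps.

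\textbf{Step 2: $k=0$.} This is where the commutation is used more concretely. The right side is
\[g(f(z))=\tfrac{k}{s}f(z)+L\bigl(e^{f(z)/s}\bigr),\]
and the left side is
\[f(g(z))=P\bigl(e^{kz/s}e^{l(z)}\bigr)+Q\bigl(e^{-kz/s}e^{-l(z)}\bigr).\]
Since $g$ is non-linear, $l$ is non-constant. Compare the two sides along a ray on which $|e^{z}|\to\infty$ and use a Borel-type lemma on linear combinations of exponentials $e^{h_j}$: the terms $\frac{k}{s}f$ grow like a polynomial in $e^z$, while the exponential terms are of the form $e^{(\text{nonconstant entire})}$. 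The identity forces a matching of exponents of the form
\[\tfrac{k}{s}z+l(z)=\text{(const)}\pm f(z)\]
up to additive constants. In particular, $\frac{k}{s}z+l(z)$ is itself periodic with period $2\pi i$ after adjusting. Hence $\frac{k}{s}z$ is a periodic function minus another periodic function, which forces $k=0$. Substituting back gives $g=\epsilon f+c$ with $\epsilon=\pm1$.

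\textbf{Step 3: determining $\epsilon$ and $c$.} Plug $g=\epsilon f+c$ into $f\circ g=g\circ f$ to get
\[P(e^{\epsilon f+c})+Q(e^{-\epsilon f-c})=\epsilon P(e^{f})+\epsilon Q(e^{-f})+c.\]
Regard this as an identity in $u=e^{f}$; this is legitimate because $f$ is non-constant, so $e^f$ takes an open set of values. Comparing Laurent coefficients in $u$ then gives the following:
\begin{itemize}
\item For $\epsilon=1$: the coefficients give $e^{jc}=1$ for the relevant $j$, together with $c=0$ from the constant term.
\item For $\epsilon=-1$: the coefficient identities become $p_j e^{-jc}=-q_j$ and $q_je^{jc}=-p_j$, and the constant terms give $P(0)+Q(0)=-P(0)-Q(0)+c$. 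These force $P=-Q$ (after $c=0$, which again uses the constant term and the coefficient relations). So $g=-f$ is possible only in case (ii).
\end{itemize}
Conversely, if $P=-Q$ then $f$ is odd, and Remark \ref{remark1} shows that $-f$ commutes with $f$.

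\textbf{Main obstacle.} Step 2 is the expected difficulty: rigorously excluding $k\neq0$ and extracting $g=\pm f+c$ from the functional equation. I would try first to apply Borel's theorem on exponential polynomials after expanding both sides in $e^{z/s}$ and $e^{l}$.
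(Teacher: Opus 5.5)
There is a genuine gap: your Step~3 makes a false deduction, and the statement you are proving is false as quoted. In the case $\epsilon=-1$ the identity is $f(c-u)=c-f(u)$. Writing $P=\sum p_jw^j$ and $Q=\sum q_jw^j$, comparison of coefficients gives exactly $c=2\bigl(P(0)+Q(0)\bigr)$ and $p_je^{jc}=-q_j$ for $j\ge1$. (Your version has the sign of the exponent reversed, which does not matter here.) Equivalently, $f(z+c/2)-c/2$ is odd. Nothing in these relations forces $c=0$.

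Here is a concrete counterexample to (i). Let $f(z)=e^{z}-e^{-z}+\pi i$, so $P(w)=w+\pi i$, $Q(w)=-w$ and $P\not\equiv-Q$. Let $g=-f+2\pi i=-e^{z}+e^{-z}+\pi i$, which is non-linear of order $1$. With $s=e^{z}-e^{-z}$ we have $e^{\pm g}=-e^{\mp s}$ and $e^{\pm f}=-e^{\pm s}$. Hence $f(g(z))=e^{s}-e^{-s}+\pi i=g(f(z))$, although $g\neq f$.

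The mechanism is a translation conjugacy. $f$ is the conjugate of the odd function $h(v)=-e^{v}+e^{-v}$ under $v\mapsto v+\pi i$, and $g$ is the corresponding conjugate of $-h$. Translations preserve both the form $P(e^z)+Q(e^{-z})$ and commutation, but not the condition $P\equiv-Q$. Compare the pair $\sin z+k\pi$, $-\sin z+k\pi$ in Theorem~\ref{th3}(i).

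So the most your Steps~1--3 could establish is that $g=f$, or $g=-f+c$ with $c=2(P(0)+Q(0))$ and $p_je^{jc}=-q_j$ for all $j\ge1$. Statement (ii) survives, because $P\equiv-Q$ forces $c=0$. Statement (i) must be weakened accordingly.

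Independently of this, Steps~1 and~2 are not yet arguments.

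\begin{itemize}
\item In Step~1, the principle that finite order of $g$ forces $L$ to have finitely many terms is false. For example, $\sum_{n\ge0}e^{-n^2}e^{nz}$ is $2\pi i$-periodic, has order $2$, and is not a polynomial in $e^{z}$. A P\'olya-type argument only shows that $L$ has order zero in $w$ and in $1/w$. Any finiteness of $L$ must therefore come from the functional equation $f\circ g=g\circ f$, not from growth alone.
\item Step~2 asserts the central conclusion $\frac{k}{s}z+l(z)=\pm f(z)+\mathrm{const}$ without proof. This matching of exponents is essentially the whole content of the theorem. You would have to show how Borel's theorem, or a Nevanlinna-type growth comparison of $f\circ g$ and $g\circ f$, separates the term $\frac{k}{s}f$ from the exponential terms and pins down the exponents.
\end{itemize}
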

\noindent\large{It says that for any two polynomials $P, Q,$ the only entire function of finite order that commutes with $f(z)=P(e^z) + Q(e^{-z})$ is $f$ itself or $-f.$}

\begin{remark}
If we relax the condition of finite order on function $g$ in above theorem, then conclusion may not hold. For instance, if  $P(z)=z$ and $Q(z)=0,$ then $g(z)=e^{e^z}$ commutes with $f(z)=e^z$ and is of infinite order. Here, neither $g=f$ nor $g=-f.$
\end{remark}
As a continuation of Theorem \ref{per_1},  some more results were 
obtained on the permutability of periodic entire functions. Some  relationships between two commuting entire functions was observed. 
Also, some necessary conditions under which a function  permutable with a
given function of the form $G(e^z),$ where $G$ is an entire function was obtained in \cite{Zheng_Yang}.


\begin{theorem}\label{th1}
\large{(\cite{Zheng_Yang}, Theorem 1)
Let $f$ be entire, with order less than $\frac{1}{2}$ and $h$ be entire commuting with $f(e^z).$ If $h(f(0))\neq f(0),$ then h is periodic and can be expressed as $h(z)=g(e^{\frac{z}{s}})$ for some entire function $g$ and positive integer  $s.$
}
\end{theorem}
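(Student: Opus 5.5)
Write $F(z)=f(e^z)$, which is periodic with period $2\pi i$. Since $f$ has order less than $\tfrac12$, Theorem \ref{per_1} applies to $F$ (take the second function to be $0$). So any entire $h$ commuting with $F$ has the form
\[
h(z)=\frac{k}{s}z+l(z),
\]
where $k,s\in\mathbb Z$, $s>0$, and $l$ is entire with period $2s\pi i$. The task is to show that the hypothesis $h(f(0))\neq f(0)$ forces $k=0$. Once $k=0$, $h=l$ is $2s\pi i$-periodic, so it factors as $h(z)=g(e^{z/s})$ with $g$ entire on $\C^*$. We then need $g$ to extend to an entire function, i.e. to have no principal part at $0$; this is the second issue to handle.

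To get $k=0$, I would use the commutation relation $h(F(z))=F(h(z))$ together with periodicity. Replacing $z$ by $z+2s\pi i$ leaves the left side unchanged, since $F$ has period $2\pi i$. On the right side, $h(z+2s\pi i)=h(z)+2k\pi i$. Hence
\[
F\bigl(h(z)+2k\pi i\bigr)=F\bigl(h(z)\bigr).
\]
This is automatically true, so periodicity alone gives no contradiction. We must instead exploit growth.

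The plan is a comparison of $h(F(z))$ and $F(h(z))$ along suitable paths. On the left, $l$ is $2s\pi i$-periodic, so $l(w)=L(e^{w/s})+M(e^{-w/s})$ with $L,M$ holomorphic. By Pólya's result, as in the proof of the Corollary above, $L$ and $M$ have zero order if $h$ has finite order. In general, more care is needed: the composition identity $F\circ h=h\circ F$ together with Pólya's lemma gives order information on $l$, provided $F$ is transcendental. Now suppose $k\neq 0$. Since $f$ has order $<\tfrac12$, the $\cos\pi\rho$ theorem gives radii $r_n\to\infty$ with $\min_{|w|=r_n}|f(w)|\to\infty$. I would evaluate the identity at the point $z_0$ with $e^{z_0}=$ a suitable point near $0$, exploiting $F(z_0)\approx f(0)$ as $\operatorname{Re} z_0\to-\infty$. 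Letting $z=x+iy$ with $x\to-\infty$, we get $F(z)\to f(0)$ uniformly in $y$. Hence $h(F(z))\to h(f(0))$, so $F(h(z))\to h(f(0))$ as $x\to-\infty$.

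Meanwhile $h(z)=\frac ks z+l(z)$. On the half-plane $\operatorname{Re}z\to-\infty$ the term $\frac ks z$ drifts linearly, and $l(z)$ is governed by $M(e^{-z/s})$. If $M$ is constant, then $h(z)\approx \frac ks x$, so $e^{h(z)}$ tends to $0$ or $\infty$. In the first case $F(h(z))\to f(0)$, which would force $h(f(0))=f(0)$, contradicting the hypothesis. In the second case, $|e^{h}|$ runs along a path to $\infty$ where, by the $\cos\pi\rho$ theorem, $f$ is unbounded on many circles; this contradicts convergence to the finite value $h(f(0))$. If $M$ is nonconstant, then $h(z)$ itself tends to $\infty$ along curves as $x\to-\infty$, and $F(h(z))$ cannot stay near a finite constant. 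Either way $k=0$. The same argument, run with $k=0$, shows $M$ must be constant: otherwise $F\circ h$ is unbounded as $x\to-\infty$. So $h(z)=L(e^{z/s})+\text{const}$, which gives the entire $g$. The uniqueness of the limit $h(f(0))\neq f(0)$ is exactly what excludes the degenerate case $e^{h}\to0$.

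\textbf{Main obstacle.} The delicate step is the case where $h(z)$ tends to $\infty$ but only along thin sets. Showing that $F(h(z))$ cannot converge there requires the minimum-modulus estimate for $f$ of order $<\tfrac12$, combined with a continuity and connectedness argument: the image curve $e^{h(z)}$ must cross the circles $|w|=r_n$. I would handle this by choosing vertical segments $x=\text{const}$ and applying the argument principle to $e^{h}$.
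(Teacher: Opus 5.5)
The survey does not prove this theorem (it only cites Zheng--Yang), so your argument has to stand on its own. The reduction is sound. Theorem \ref{per_1} with second function $0$ gives $h(z)=\frac{k}{s}z+L(e^{z/s})+M(e^{-z/s})$. The uniform limit $F(h(z))=h(F(z))\to h(f(0))$ as $\operatorname{Re}z\to-\infty$ is correct, and the case $M$ constant is essentially handled.

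The gap is the case $M$ nonconstant, which is where the content of the theorem lies. Your inference ``$h(z)$ tends to $\infty$ along curves, so $F(h(z))$ cannot stay near a finite constant'' is not valid. The fact that $h(z)\to\infty$ says nothing about $e^{h(z)}$, which along such curves may stay in a bounded annulus or tend to $0$. There $f(e^{h(z)})$ is bounded, and in the second case it even tends to $f(0)$. The sketch in your ``main obstacle'' paragraph (argument principle for $e^{h}$ on vertical segments) does not supply the missing estimate either.

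What closes the gap within your framework is the following. Put $\rho=e^{-x/s}$. Borel--Carath\'eodory applied to $-M$ gives $\min_{\theta}\operatorname{Re}M(\rho e^{i\theta})\le -c\rho+C$ with $c>0$. This beats the linear term $\operatorname{Re}\frac{k}{s}z=-k\log\rho$, so $\min_{0\le y\le 2\pi s}\operatorname{Re}h(x+iy)\to-\infty$ as $x\to-\infty$. Hence there are $z_n$ with $\operatorname{Re}z_n\to-\infty$ and $e^{h(z_n)}\to 0$. Then $h(f(0))=\lim F(h(z_n))=f(0)$, contradicting the hypothesis. No minimum-modulus ($\cos\pi\rho$) estimate is needed for this.

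A cleaner finish avoids half-plane estimates altogether. Since $h(z+2s\pi i)=h(z)+2k\pi i$, the function $\Phi(u)=\exp\bigl(h(s\log u)\bigr)=u^{k}e^{\Lambda(u)}$, with $\Lambda(u)=L(u)+M(1/u)$, is single-valued and holomorphic on $\C^*$. In this variable the commutation relation reads $f(\Phi(u))=h(f(u^s))\to h(f(0))$ as $u\to 0$. Note that $f$ is nonconstant, since $f\equiv c$ would force $h(c)=c$, which is excluded. Then:
\begin{enumerate}
\item Casorati--Weierstrass rules out an essential singularity of $\Phi$ at $0$.
\item The open mapping theorem together with Liouville's theorem rules out a pole.
\item $\Phi(0)=0$ would give $h(f(0))=f(0)$.
\end{enumerate}
So $\Phi$ is holomorphic and nonzero at $0$, and therefore $\Phi'/\Phi=\frac{k}{u}+\Lambda'(u)$ has residue $0$ there. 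Since $\Lambda'$ is the derivative of a single-valued function, its residue vanishes, so $k=0$. Then $\Lambda'=\Phi'/\Phi$ is holomorphic at $0$, so $\Lambda$ extends to an entire $g$ with $h(z)=g(e^{z/s})$.

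In either version the order hypothesis enters only through Theorem \ref{per_1}. Your P\'olya digression about the order of $l$ plays no role.
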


\begin{theorem}\label{th2}
\large{(\cite{Zheng_Yang}, Theorem 2)
Let $f(z)=P(z)e^{Q(z)}$ with both $P(z), \ Q(z)$ being non-constant polynomials and $P(0)\neq 0.$ Assume that $h(z)$ is entire and permutable with $f(e^{z})$. If $h(f(0))\neq f(0)$, then h(z) is periodic and has the form $h(z)=g(e^{\frac{z}{s}})$ for some entire function $g(z)$ and positive integer $s.$
}
\end{theorem}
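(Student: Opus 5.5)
Theorem \ref{th2} follows from the argument behind Theorem \ref{th1}. The function $P(z)e^{Q(z)}$ has order $\deg Q\geq 1$, so Theorem \ref{th1} does not apply verbatim. Its proof, however, only uses the growth hypothesis at a few points: to control $M(r,f(e^z))$ and to extract periodicity of $h$ from the functional equation $h(f(e^z))=f(e^{h(z)})$. The plan is to rerun that argument, replacing the order-$<\frac12$ estimates with the explicit structure of $F(w)=P(w)e^{Q(w)}$.

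Write $\Phi(z)=F(e^z)$. The first step is to show that $h$ has no more growth than $\Phi$ permits. By Pólya's composition lemma applied to $\Phi\circ h=F\circ e^{h}$ and $h\circ \Phi$, the relation $h\circ\Phi=\Phi\circ h$ forces $e^{h}$ to grow roughly like $h$ composed with something of the size of $\Phi$. Comparing $\log M(r,\Phi\circ h)$ with $\log M(r,h\circ\Phi)$, and using that $\log\log\log M(r,\Phi)\sim$ const$\cdot\log r$, I expect to conclude that $h$ is of finite order, or at least has a growth bound strong enough for Nevanlinna-type arguments.

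The key step is periodicity. Since $\Phi(z+2\pi i)=\Phi(z)$, we get
\[
\Phi(h(z+2\pi i))=h(\Phi(z+2\pi i))=h(\Phi(z))=\Phi(h(z)).
\]
Hence $F(e^{h(z+2\pi i)})=F(e^{h(z)})$ for all $z$.

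Next I need the factorization structure of $F$. I claim that $F(u)=F(v)$ with $u,v$ entire functions of $z$ (here $u=e^{h(z+2\pi i)}$, $v=e^{h(z)}$) forces $u=\omega v$ for a root of unity $\omega$. The idea is that $F(u)-F(v)$ factors as an entire function of two variables, and $F=Pe^{Q}$ with $P(0)\ne 0$ is pseudo-prime in the relevant sense. In effect, the only solutions of $F(u)=F(v)$ come from the finitely many symmetries $u\mapsto \omega u$ of $F$, which require $P(\omega u)e^{Q(\omega u)}\equiv P(u)e^{Q(u)}$. This gives $h(z+2\pi i)=h(z)+\log\omega+2\pi i m$, a constant shift $c$.

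Iterating gives $h(z+2\pi i N)=h(z)+Nc$. The hypothesis $h(F(1))\neq F(1)$ is then used to rule out the nonperiodic cases. Note that $\Phi(0)=F(1)$, which corresponds to the $h(f(0))\neq f(0)$ in the statement after the paper's normalization. Evaluating the functional equation at fixed points and at the zero set of $e^{z}$-translates shows $c\equiv 0 \pmod{2\pi i/s}$ with a linear term impossible. So $h(z+2\pi i s)=h(z)$ for some positive integer $s$. A $2\pi i s$-periodic entire function can be written as $h(z)=g(e^{z/s})$ with $g$ entire on $\mathbb C^*$. Finally, the growth bound on $h$ from the first step kills the principal part at $0$, so $g$ is entire.

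The main obstacle is the factorization claim in the key step, namely that $F(u)\equiv F(v)$ forces $u=\omega v$. My first attempt would be via Nevanlinna theory: write $P(u)e^{Q(u)}=P(v)e^{Q(v)}$. Then $Q(u)-Q(v)=\log(P(v)/P(u))$, so $e^{Q(u)-Q(v)}=P(v)/P(u)$ is a ratio of polynomials in $u,v$ without zeros where $P(u)P(v)\ne0$. Borel's lemma should then give $Q(u)-Q(v)$ constant. Combined with $P(0)\neq 0$ and the nonconstancy of $P$ and $Q$, matching zeros of $P(u)$ and $P(v)$ yields $u=\omega v$.
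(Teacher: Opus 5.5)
The survey gives no proof of this result, so your argument has to stand on its own, and its central step does not. Your key claim is that $f(u)\equiv f(v)$ for entire $u,v$ (with $f=Pe^{Q}$, your $F$) forces $u=\omega v$ for a root of unity $\omega$. This is false as stated. The function $f(w)=(w^{2}-2w+2)e^{(w-1)^{2}}$ has $P(0)=2\neq 0$ and satisfies $f(2-w)=f(w)$, so $u=2-v$ works for every entire $v$. Symmetries of $f$ can be affine maps $w\mapsto \alpha w+\beta$, not just rotations about $0$. What rules such solutions out in your setting is that $u=e^{h(z+2\pi i)}$ and $v=e^{h(z)}$ are zero-free (Picard), and your sketch never uses this.

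The sketched proof of the claim fails too. The identity $P(u)e^{Q(u)-Q(v)}=P(v)$ is not one to which Borel's lemma applies, because the coefficients $P(u)$, $P(v)$ are not small compared with $e^{Q(u)-Q(v)}$. Moreover, in the example above $Q(u)-Q(v)\equiv 0$ and $P(u)\equiv P(v)$, so ``matching zeros'' cannot yield $u=\omega v$. So the step your whole route rests on is assumed rather than proved. It requires a real Nevanlinna-theoretic analysis of $P(e^{a})e^{Q(e^{a})}=P(e^{b})e^{Q(e^{b})}$ with $a=h(z+2\pi i)$, $b=h(z)$. A weaker intermediate statement suffices: $h(z)=\frac{k}{s}z+l(z)$ with $l$ of period $2s\pi i$, the shape in Theorem \ref{per_1}. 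Your claim would force $s=1$, which the theorem does not assert.

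The hypothesis is also misused, and this hides the second idea the proof needs. In the statement, $f(0)=P(0)$ is the value of $Pe^{Q}$ at $0$, that is, the asymptotic value of $f(e^{z})$ as $\mathrm{Re}\,z\to-\infty$. It is not $f(e^{0})=f(1)$, and no normalization turns one into the other. Your sentence about ``fixed points and the zero set of $e^{z}$-translates'' describes no argument ($e^{z}$ has no zeros), and the linear part of $h$ must vanish exactly, not modulo $2\pi i/s$.

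The finish goes as follows. Suppose $h(z)=\frac{k}{s}z+L(e^{z/s})$ with $L$ holomorphic on $\mathbb{C}\setminus\{0\}$. Put $w=e^{z/s}$ and $\phi(w)=e^{h(z)}=w^{k}e^{L(w)}$. Then $f(\phi(w))=h(f(e^{z}))=h(f(w^{s}))\to h(f(0))$ as $w\to 0$. By Casorati--Weierstrass and Liouville, $\phi$ therefore has neither an essential singularity nor a pole at $0$. Comparing winding numbers then gives $k\ge 0$ and $L$ holomorphic at $0$. If $k>0$, then $\phi(0)=0$, i.e.\ $h(f(0))=f(0)$, contradicting the hypothesis. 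Hence $k=0$ and $h(z)=L(e^{z/s})$ with $L$ entire.

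This is also what makes $g$ entire; your appeal to growth cannot do it. Finite order does not remove a principal part: $e^{z}+e^{-z}=g(e^{z})$ with $g(w)=w+1/w$. In any case $h$ need not have finite order, since $h=f(e^{z})$ commutes with $f(e^{z})$ and has infinite order.
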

Note that, $f$ in Theorem \ref{th1} has order less than $\frac{1}{2},$ while $f$ in Theorem \ref{th2} has order at least $1.$ However, the conclusions of both the theorems are the same.
 The next result provides functions of finite order that are permutable with $\cos\sqrt{z}$.
\begin{theorem}
\large{(\cite{Zheng_Yang}, Theorem 3)
Let $h$ be an entire function of finite order, commuting with $cos\sqrt{z}$. Then either $h(z) = cos\sqrt{z}$ or $h(z) = cz, c \neq 0$.
}
\end{theorem}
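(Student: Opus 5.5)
The plan is to transfer the problem to the exponential setting, where the preceding results already apply. Write $f(z)=\cos\sqrt z$; it is entire of order $\tfrac12$ and satisfies $f(z^2)=\cos z$. Let $h$ be entire of finite order with $h\circ f=f\circ h$.

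First I dispose of the linear case. If $h(z)=cz+d$, compare Taylor expansions in $f(cz+d)=cf(z)+d$, using $f(z)=\sum_k(-1)^kz^k/(2k)!$. Differentiating twice and setting $z=0$ gives $c^2f''(d)=cf''(0)$. Comparing higher-order terms, or using the growth $|f(x)|\sim\tfrac12 e^{\sqrt{|x|}}$ along the negative real axis, should force $d=0$. Then $f(cz)=cf(z)$ gives $1=f(0)=c$ at $z=0$, i.e. $c=1$. So a linear $h$ must be the identity, and the identity has the stated form $cz$ with $c=1$.

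For nonlinear $h$, the next step is to bound the growth of $h$. By Pólya's theorem on compositions, $f\circ h$ has the same order as $h\circ f$. On one hand, $\rho(f\circ h)\ge$ roughly $\tfrac12$ of the growth coming from $M(r,h)$; on the other, $\rho(h\circ f)$ is controlled via $\rho(h\circ f)=\rho(h)\cdot\tfrac12$-type estimates when $h$ has finite positive order. Pólya's lemma gives $M(r,h\circ f)\ge M(cM(r/2,f),h)$. These bounds should force $\rho(h)=\tfrac12$, or at least $\rho(h)<\infty$ of a restricted type, and show $h$ is transcendental. Polynomials are excluded separately: by the Baker–Iyer result stated in the introduction, a polynomial commuting with a transcendental function is linear.

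With growth controlled, I lift the problem. Set $F(z)=\cos z=f(z^2)$ and look for an entire $H$ with $H(z)^2=h(z^2)$. To get this I would use the differential equation: $f$ satisfies $4zf''+2f'+f=0$. Theorem \ref{Diff_Zheng} applies since $f$ has positive lower order $\tfrac12$, and shows $h$ satisfies a linear ODE with polynomial coefficients. Combined with $h\circ f=f\circ h$, the plan is to show that $h$ is even in $\sqrt z$ in the right sense, so that $h(z^2)=k(z)$ is an even entire function commuting appropriately: $k(\sqrt{\cdot})$ composed with $\cos$.

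Rather than pursuing this indirectly, I would compare singular structure. Critical values of $f$ are $\pm1$; they are attained at $z=(n\pi)^2$, $n\ge1$, with $f'(z)=0$ there, while $z=0$ is not critical since $f'(0)=-\tfrac12$. Commutation gives $f'(h(z))h'(z)=h'(f(z))f'(z)$. Hence $h$ maps the critical set of $f$ into critical points of $f$ or zeros of $h'$, and maps asymptotic and critical values $\{\pm1\}$ into themselves. Counting zeros via Nevanlinna theory with $\rho(h)=\tfrac12$ should show that $h-f$ has too few zeros unless $h\equiv f$.

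The main obstacle is this last rigidity step, showing $h=f$ among transcendental $h$. My first attempt is to examine $g=h-f$. It satisfies $f(h)-f(f)=h(f)-f(f)$, i.e. $g(f(z))$ equals a difference of $f$-values. One then argues, as in Kobayashi's proof of Theorem \ref{1}, that $g$ has order below $\tfrac12$ yet is bounded on preimages of $\pm1$, so $g$ is constant. The constant is then forced to be $0$ by evaluating at a fixed point of $f$.
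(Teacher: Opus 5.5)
The survey only quotes this result from Zheng--Yang and gives no proof, so your argument has to stand on its own. It has a genuine gap. The decisive step --- that a transcendental $h$ of finite order commuting with $f(z)=\cos\sqrt z$ must equal $f$ --- is never established, and the ingredients you propose for it are false or unsupported.

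Your claim that $h$ maps the singular values $\{\pm1\}$ of $f$ into themselves already fails for $h=f$, since $f(1)=\cos 1$ and $f(-1)=\cosh 1$. All that $f'(h)h'=h'(f)f'$ gives is that at each critical point $(n\pi)^2$, either $h((n\pi)^2)$ is a critical point of $f$ or $h'((n\pi)^2)=0$. For $g=h-f$ you give no reason why its order should drop below $\tfrac12$: both $h$ and $f$ have order $\tfrac12$, and no cancellation is derived. Nor do you show that $g$ is bounded on $f^{-1}(\{\pm1\})=\{(n\pi)^2:n\ge 0\}$, which is the same as $h$ being bounded there. Evaluating $g\circ f=f\circ h-f\circ f$ at preimages of $(n\pi)^2$ just returns the tautology $g((n\pi)^2)=h((n\pi)^2)\mp1$. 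The Nevanlinna zero count is not an argument either. Even granting $g\equiv C$, a fixed point $\zeta$ of $f$ only gives $f(\zeta+C)=\zeta+C$. What works instead is that $f(w+C)=f(w)+C$ for all $w$, since $f$ is surjective; then $f'$ is $C$-periodic, and $C=0$ because $f'$ has order $\tfrac12$.

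The preliminary steps need repair as well. For transcendental $h$, both $f\circ h$ and $h\circ f$ have infinite order (P\'olya, since $f$ has positive order), so an estimate of the form $\rho(h\circ f)=\tfrac12\rho(h)$ is meaningless. Comparing $\log\log M$ instead gives $\log M(r,h)=O(\sqrt r)$, and it gives $\rho(h)=\tfrac12$ only once $\rho(h)>0$ is known. Order zero is a genuine case: an entire compositional square root of $f$, if one existed, would commute with $f$ and have order $0$. You could exclude it via Theorem~\ref{Diff_Zheng}, since transcendental solutions of linear ODEs with polynomial coefficients have positive order.

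The lifting $H(z)^2=h(z^2)$ cannot be a general step, because for $h=f$ the function $h(z^2)=\cos z$ has simple zeros. It does, however, appear as one branch of a real dichotomy. With $K(w)=h(w^2)$ you have $f(K(w))=h(\cos w)$, hence $f(K(w+2\pi))=f(K(w))$. Since $\cos\sqrt A=\cos\sqrt B$, this forces either $K(w+2\pi)\equiv K(w)$, or $K=L^2$ with $L$ entire and $L(w+2\pi)=\pm L(w)+2\pi m$, $m\in\Z\setminus\{0\}$.

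In the periodic branch, $K=Q(\cos w)$, i.e.\ $h=Q\circ f$. The bound $\log M(r,h)=O(\sqrt r)$ makes $Q$ a polynomial, and surjectivity of $f$ makes $Q$ commute with $f$. Baker--Iyer plus your linear computation (cleanest via $f'(cz+d)=f'(z)$) then give $Q(z)=z$, so $h=f$. The branch $h(w^2)=L(w)^2$, which contains $h(z)=z$, is where the remaining work lies, and your proposal does not address it.

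Finally, the statement tacitly needs $h$ non-constant: $h\equiv\zeta$ with $f(\zeta)=\zeta$ commutes with $f$.
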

Zheng and Yang also proved the following important result using Wiman-Valiron theory.
\begin{theorem}\label{th3}
\large{(\cite{Zheng_Yang1}, Theorem 3)
Let $f(z)=\sin z+Q(z)$, where $Q(z)$ is a polynomial. If $g(z)$  is a non-linear entire function of finite order, which is permutable with $f(z)$ then 
\begin{enumerate}
\item[(i)] {If deg $(Q)=0,$ then $g=f$ or $g=-\sin z+k \pi,$ and $ \, f=\sin z+k \pi, k\in\Z$;}
\item[(ii)] {If deg $(Q)=1,$ then  $g=f+2k\pi$ or $-f+2k\pi$ for some integer $k$;}
\item[(iii)] {If deg $Q>1$ and $Q$ is not odd, then $g=f$;}
\item[(iv)] {If deg $Q>1$ and $Q$ is odd, then $g=f$ or $g=-f$.}
\end{enumerate}
}
\end{theorem}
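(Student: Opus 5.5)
The plan is to first show that $g$ must be an affine function of $f$, i.e. $g=af+b$, and then determine $a,b$ from a functional identity in one variable, which will produce the case distinction on $\deg Q$.

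\emph{Step 1: $g$ is transcendental.} If $g$ were a polynomial, the result of Baker and Iyer quoted in the introduction would force $g(z)=e^{2s\pi i/p}z+d$. This is excluded because $g$ is non-linear. So $g$ is transcendental of finite order, and $f=\sin z+Q(z)$ has order $1$.

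\emph{Step 2: $g=af+b$ with $a=\pm1$.} This is the core of the argument. I would follow the method Zheng and Yang use in the surrounding results.
\begin{itemize}
\item Apply Wiman--Valiron theory to $g$ at points $z_r$ with $|z_r|=r$ and $|g(z_r)|=M(r,g)$. Near such points $g$ behaves like $g(z_r)(z/z_r)^{\nu(r)}$.
\item In the identity $\sin g(z)+Q(g(z))=g(\sin z+Q(z))$, the left side near $z_r$ is essentially $e^{\pm i g}$ (plus $Q(g)$). Comparing Nevanlinna characteristics of both sides should bound $T(r,g)$ by a constant multiple of $T(r,f)$, giving $g$ order exactly $1$ and mean type.
\item In the half-planes $\operatorname{Im} z\to\pm\infty$, $f$ is dominated by $\mp\frac{1}{2i}e^{\mp iz}$. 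Using the periodicity structure $f(z+2\pi)=f(z)+Q(z+2\pi)-Q(z)$, I would show that $g-af$ has order less than $1$ for some constant $a$. I would then show that $g-af$ is bounded on a sequence of curves, hence constant by a Phragm\'en--Lindel\"of argument.
\item Comparing leading exponential terms in the commutation relation forces $a^2=1$.
\end{itemize}
I expect this to be the main obstacle. In particular, the Wiman--Valiron estimates must be made uniform enough to isolate the exponential terms, and I would try the Nevanlinna characteristic comparison first.

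\emph{Step 3: solve the functional equation.} Once $g=af+b$, we have $g\circ f=af\circ f+b$ and $f\circ g=f(af+b)$. Since $f$ is non-constant, its range is open, so the identity holds for all $w$:
$$\sin(aw+b)+Q(aw+b)=a\sin w+aQ(w)+b.$$
Separating the bounded-on-$\mathbb{R}$ trigonometric part from the polynomial part (by growth along the imaginary axis) gives two conditions:
\begin{itemize}
\item $\sin(aw+b)=a\sin w$, which for $a=\pm1$ forces $b\in 2\pi\Z$ when $a=1$, and $b\in 2\pi\Z$ with $\sin(-w+b)=-\sin w$ when $a=-1$;
\item $Q(aw+b)=aQ(w)+b$.
\end{itemize}
The polynomial condition is then checked by degree:
\begin{itemize}
\item For $\deg Q>1$, comparing the leading coefficients and the next-to-leading terms gives $b=0$. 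In that case $a=-1$ requires $Q$ odd, which gives (iii) and (iv).
\item For $\deg Q=1$, say $Q=\alpha z+\beta$, any $b\in2\pi\Z$ works with $a=\pm1$, which gives (ii).
\item For $\deg Q=0$, the constant condition $Q=aQ+b$ gives $b=0$ when $a=1$. When $a=-1$ it gives $2Q=b\in2\pi\Z$, so $Q=k\pi$ and $g=-\sin z+k\pi$, which gives (i).
\end{itemize}
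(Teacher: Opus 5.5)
The survey does not prove this theorem. It cites Zheng and Yang's Wiman--Valiron argument and afterwards only checks which of the listed $g$ really commute with $f$. Against that, your proposal has two genuine problems.

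\textbf{Step~2 is not an argument.} It carries the entire content of the theorem, but it lists tools rather than using them. Comparing the growth of $\sin g+Q(g)$ and $g\circ f$ via P\'olya's lemma only constrains the growth of $g$; it yields, e.g., that $g$ has exponential type. Nothing in your sketch identifies the constant $a$, proves that $g-af$ has order less than $1$, or produces curves on which $g-af$ is bounded. The relation $f(z+2\pi)=f(z)+Q(z+2\pi)-Q(z)$ is never linked to $g$. The reduction $g=af+b$ is exactly what Zheng and Yang extract from Wiman--Valiron estimates, and you name that tool without using it. For $\deg Q\ge1$ there is a shortcut inside the survey: Theorem~\ref{tw1}, whose hypotheses are checked there for $\sin z+P(z)$, gives $g=cf^n+d$. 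Since $f^n$ has infinite order for $n\ge2$ by P\'olya's theorem, finite order of $g$ forces $n=1$. The periodic case $\deg Q=0$ would still need its own argument. Your last bullet ($a^2=1$) is superfluous: once $g=af+b$ with $a\neq0$, Borel's theorem applied to $\sin(aw+b)-a\sin w=aQ(w)+b-Q(aw+b)$ already forces $a=\pm1$ and $e^{ib}=1$.

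\textbf{Step~3 fails for $\deg Q=n>1$ and $a=-1$, and cannot be repaired, because statement (iii) as quoted is false.} Write $Q(w)=q_nw^n+q_{n-1}w^{n-1}+\cdots$. The identity $Q(-w+b)=-Q(w)+b$ forces $n$ odd. Comparing coefficients of $w^{n-1}$ gives $nq_nb+q_{n-1}=-q_{n-1}$, i.e.\ $nq_nb=-2q_{n-1}$, not $b=0$. With $b=2k\pi$, the condition says exactly that $u\mapsto Q(k\pi+u)-k\pi$ is odd, i.e.\ that $f$ is conjugate by the translation $z\mapsto z+k\pi$ to an odd function. This can happen with $k\neq0$. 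For instance, $Q(z)=(z-\pi)^3+\pi$ gives $f(z)=\sin z+(z-\pi)^3+\pi$ with
\[
f(2\pi-w)=-\sin w-(w-\pi)^3+\pi=-f(w)+2\pi .
\]
So $g=-f+2\pi$ is a non-linear entire function of order $1$ commuting with $f$, although $\deg Q=3$ and $Q$ is not odd. The correct conclusion for $\deg Q>1$ is: $g=f$, or $g=-f+2k\pi$ with $Q(k\pi+u)-k\pi$ odd in $u$. Statement (iv) does survive. If $Q$ is odd and also $Q(2k\pi-w)=-Q(w)+2k\pi$, then $Q(w+2k\pi)=Q(w)+2k\pi$, which forces $k=0$ when $\deg Q\ge2$. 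The survey's remark after the theorem, that a non-odd $Q$ leaves only $g=f$, misses the same translation. Your handling of $\deg Q\le1$ correctly yields the necessary forms in (i) and (ii). However, the aside that every $b\in2\pi\Z$ \emph{works} when $\deg Q=1$ is wrong: for $a=1$ and $b\neq0$ one needs $Q(z)=z+\beta$.
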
 
\noindent\large{We make some observation from above theorem. The first case gives that $f$ has the form $f(z)=\sin z+c$ for some constant $c.$ On taking $g=-\sin z+c$, the functional equation $f\circ g=g\circ f,$ gives $\sin (-\sin z+c)=-\sin (\sin z+c).$ This holds whenever $c=k\pi, k\in\Z.$\\
For the second case, deg $(Q)=1$ gives $f(z)=\sin z +az+b$ for some $0\neq a, b\in\C.$ On taking $g=f+2k\pi$  for some integer $k$ and solving the functional equation  $f\circ g=g\circ f, $ we get that $a=1.$ That is, $f(z)=\sin z+z+b, \,b\in\C$ commutes with $g=f+2k\pi$ for some integer $k.$ In this case, $f$ satisfies $f(z+2k\pi )=f(z)+2k\pi .$ On the other hand, if we take $g=-f+2k\pi$  for some integer $k$, then on solving the functional equation $f\circ g=g\circ f $ we conclude that $a=1$ and $b=0.$ That is, in this case, $f(z)=\sin z+z$ commutes with $g=-f+2k\pi$ for some integer $k.$ In the third case, as $Q$ is not odd, therefore, $f$ is also not an odd function, and it turns out that the only function commuting with $f$ is $f$ itself. Finally, in the last case, when $Q$ is odd then $f$ is also an odd function and by Remark \eqref{remark1} $g=-f$ commutes with $f.$ }

\section{Dynamical Properties Of Permutable Entire Functions}

\noindent \large{All the entire functions considered in this section are transcendental. Now that we have an idea of how a transcendental entire function permutable with some given function might look, it is natural to discuss the relationship between their Fatou and Julia sets.  In addition, we will see how various subsets of the Fatou and Julia sets are related. This section surveys several results directed towards answering the following question raised by Baker in \cite{Baker_wander}.\\
\textbf{Problem A.}  Suppose $g$ and $h$ are  non-linear entire functions which are permutable. 
Is $J(g) = J(h)? $ }

\noindent\large{We will require some notions which we state here. Montel's theorem  states that if $\mathcal{F}$ is a family of analytic functions defined on a domain $\Omega  \subset \mathbb{C}$ such that each member of this family omits at least  two distinct points in $\C,$ then  $\mathcal{F}$ is normal.
}

\noindent \large{There is a class of functions called Speiser class $\mathcal{S}$ (these functions have a finite number of asymptotic and critical values). Functions in this class are said to be of finite type. There is yet another class of functions called Eremenko-Lyubich class $\mathcal B$ (these functions have a bounded set of asymptotic and critical values). Functions in this class are said to be of bounded type. Clearly, $\mathcal{S}\subset \mathcal{B}.$\\

Suppose $U$ is a  Fatou component of $f.$ Then $U$ is called a wandering domain if all forward iterates are disjoint that is, $f^m (U) \cap f^n (U) = \emptyset$ for $m \neq n.$ If $U$ is not wandering, then $U$ is called a pre-periodic component of $F(f).$  The behavior of $f^n$ on periodic components is well understood.
 We now give the classification of Fatou components of a transcendental entire function $f$ \cite{Yang}.}
\begin{theorem}\label{sec3, thms}\cite{Yang}(Classification of Fatou components)\\

 Suppose $V$ is a Fatou component of an entire function $f$ of  period $m$. Then

\begin{enumerate}
	\item[(i)] $V$ is called an \textbf{attracting domain} if it contains an attracting periodic point $w_0$ of period $m$ and $f^{nm}(z) \to w_0 \quad \text{for } z \in V \text{ as } n \to \infty.$
	\item[(ii)] $V$ is called a \textbf{parabolic domain} if $\partial V$ contains an indifferent periodic point $w_0$ of period $m$ and $f^{nm}(z) \to w_0 \quad \text{for } z \in V \text{ as } n \to \infty.$
	\item[(iii)] $V$ is called a \textbf{Siegel disk} if there exists an analytic homeomorphism $\psi : V \to \mathbb{D},$
	where $\mathbb{D}$ is the unit disk, such that $\psi \circ f^{m} \circ \psi^{-1}(z)=e^{2\pi i \alpha}z$ for some $\alpha \in \mathbb{R}\setminus \mathbb{Q}$.
	\item[(iv)] $V$ is called a \textbf{Baker domain} if there exists $w_0 \in \partial V$ such that $f^{nm}(z) \to w_0 \quad \text{for } z \in V \text{ as } n \to \infty,$ but $f^{m}(w_0)$ is not defined. As $f$ is transcendental, $w_0$ can only be $\infty.$
\end{enumerate}


\end{theorem}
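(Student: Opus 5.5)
The statement is the classification of periodic Fatou components of a transcendental entire function $f$. My plan is to reduce everything to the invariant case and then split on the limit functions of the iterates. Replacing $f$ by $F=f^m$, the component $V$ becomes $F$-invariant, since $F(V)\subset V$. By Montel's theorem and the normality of $\{F^n\}$ on $V$, every subsequential limit of $F^{n_k}|_V$ is either a holomorphic map $V\to\mathbb{C}$ or the constant $\infty$. Non-constant limits cannot occur except in the rotation case. The case split is then:
\begin{enumerate}
\item some limit is the constant $\infty$;
\item all limits are finite constants;
\item some limit is non-constant.
\end{enumerate}

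\emph{Case (1).} First I will show that if one subsequence tends to $\infty$ locally uniformly on $V$, then the whole sequence does. Fix $z_0\in V$ and a curve $\gamma$ joining $z_0$ to $F(z_0)$ in $V$. Let $K=\bigcup_n F^n(\gamma)$, which is connected. Hyperbolic distance in $V$ is non-increasing under $F$, so consecutive points $F^n(z_0)$ and $F^{n+1}(z_0)$ stay within bounded hyperbolic distance. If the orbit had a bounded subsequence together with a subsequence tending to $\infty$, normality together with Hurwitz's theorem would contradict the fact that every limit is either constant $\infty$ or a finite holomorphic map. This gives $F^n\to\infty$ on $V$, i.e.\ a Baker domain with $w_0=\infty$. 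Here $F(w_0)$ is undefined because $\infty$ is an essential singularity.

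\emph{Case (2).} Suppose all limits are finite constants. The set of such constants must consist of fixed points of $F$ in $\overline V$: apply $F$ to $F^{n_k}\to w$ and use the bounded hyperbolic step between consecutive iterates. Fixed points of $F$ are isolated, and the limit set is connected. Hence there is a single limit $w_0$, and $F^n\to w_0$. If $w_0\in V$, the Schwarz lemma (via the hyperbolic metric, or the multiplier) gives $|F'(w_0)|\le 1$. Equality is excluded, since it would make $F$ an automorphism with a non-constant limit. So $w_0$ is attracting and $V$ is an attracting domain. If $w_0\in\partial V$, I will apply the Snail Lemma, argued through a local Fatou coordinate or the Wolff–Denjoy type argument, to conclude $F'(w_0)=1$. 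This makes $w_0$ a parabolic (indifferent) fixed point, and $V$ is a parabolic domain.

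\emph{Case (3).} Suppose $F^{n_k}\to\phi$ with $\phi$ non-constant. Passing to further subsequences with $n_{k+1}-n_k\to\infty$, I get $F^{n_{k+1}-n_k}\to\mathrm{id}$ on $\phi(V)$, which is an open subset of $V$. By the identity theorem this holds on all of $V$. Then $F|_V$ is injective and in fact an automorphism of $V$, with the sequence $F^{m_j}\to \mathrm{id}$. Since $V$ is hyperbolic and its iterates recur, $V$ is simply connected; this uses Baker's theorem that periodic components of transcendental entire functions are simply connected. The Riemann map $\psi$ then conjugates $F$ to an automorphism of $\mathbb{D}$ that recurs to the identity. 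Such a map is elliptic, namely an irrational rotation $e^{2\pi i\alpha}z$: a rational rotation would make $F^q=\mathrm{id}$ on $V$ and hence everywhere, impossible for transcendental $f$. So $V$ is a Siegel disk.

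The main obstacle is Case (2) with $w_0\in\partial V$, i.e.\ showing via the Snail Lemma that the boundary fixed point is parabolic with multiplier exactly $1$. The other hard step is the simple connectivity in Case (3), for which I will rely on Baker's result rather than reprove it.
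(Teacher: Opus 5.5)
The paper gives no proof of this statement; it is quoted from \cite{Yang}. Your architecture is the standard one: pass to $F=f^m$, show that a non-constant limit function forces a Siegel disk (with Baker's theorem supplying simple connectivity), and otherwise show that the orbit converges to a single fixed point of $F$ or to $\infty$. Case (3) and the analysis inside Case (2) are fine, up to routine details such as $\phi(V)\subset V$. That inclusion is where Hurwitz's theorem is actually needed.

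\textbf{The gap is in Case (1).} A bounded subsequence of $F^n(z_0)$ together with a subsequence tending to $\infty$ is entirely consistent with every limit function being either the constant $\infty$ or a finite holomorphic map. So neither normality nor Hurwitz's theorem yields the contradiction you claim. Your split also leaves open that an $\infty$-limit coexists with a non-constant limit. The repair uses ingredients you already have.
\begin{enumerate}
\item Do Case (3) first. In a Siegel disk the orbit of $z_0$ lies on the compact curve $\psi^{-1}(\{|w|=|\psi(z_0)|\})\subset V$, so no limit is $\infty$.
\item Otherwise all limits are constant in $\widehat{\mathbb{C}}$. Every subsequence of $(F^n)$ then has a further subsequence converging uniformly on your compact curve $\gamma$ to a constant. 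Hence the spherical distance between $F^n(z_0)$ and $F^{n+1}(z_0)$ tends to $0$, and the accumulation set $L$ of the orbit in $\widehat{\mathbb{C}}$ is connected.
\item The finite points of $L$ are fixed points of $F$, which are isolated. A finite point of $L$ would therefore be relatively open and closed in $L$, so $L$ is a single point $w_0\in\widehat{\mathbb{C}}$.
\end{enumerate}
This handles Cases (1) and (2) at once: $w_0=\infty$ gives the Baker domain, and $w_0\in\mathbb{C}$ leads into your attracting/boundary analysis.

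Two smaller remarks. First, the boundary case is not the main obstacle, because the statement only asks for $w_0$ to be indifferent. An attracting $w_0$ would have a disc of Fatou points meeting the orbit, forcing $w_0\in V$. A repelling fixed point cannot attract the uncountably many points of $V$. The Snail Lemma, which gives multiplier exactly $1$, is correct but more than you need.

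Second, your argument yields a fixed point of $f^m$. In (i) it has exact period $m$, since $f^j(w_0)$ lies in the Fatou component containing $f^j(V)$, which is not $V$ for $0<j<m$. In (ii) the period can be a proper divisor of $m$: take a fixed point of multiplier $-1$ whose two petals are swapped by $f$. So ``period $m$'' there should be read as ``fixed by $f^m$''.
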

\noindent \large{We now give some results proved by I. N. Baker. These are some initial results that are crucial for understanding the dynamical properties of permutable  entire functions.}
\noindent \large{He proved the following result in 1958. This can be verified easily by taking an  entire function $f(z)$ and $g(z)=f^n(z)$ for any $z.$}
\begin{theorem}
\large{(\cite{Baker_german}, Theorem 7)
Let $f$ and $g$ be permutable entire functions. Then there exists $p \in \mathbb{N}$ and $R > 0$ such that $M(r, g) < M(r, f^p)$ for $r > R $ where $M(r,f)$ denotes the maximum modulus of $f$ in $\{z: |z|=r\}.$
}
\end{theorem}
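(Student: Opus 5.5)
We may assume $f$ is transcendental; otherwise the statement is the classical polynomial case, or it follows from the forms of $f$ and $g$ recorded after Theorem 1.1. If $g$ is a polynomial, the claim is immediate with $p=1$: $\log M(r,f)/\log r\to\infty$, so $M(r,f)$ eventually exceeds $M(r,g)=O(r^{\deg g})$. So the real case is $g$ transcendental, and the plan is to compare the two sides of $f\circ g=g\circ f$ using a lower bound and an upper bound for the maximum modulus of a composition.

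The two ingredients are the following.
\begin{itemize}
\item \emph{Upper bound (trivial).} For every $r$,
\[
M(r,f\circ g)\le M\bigl(M(r,g),f\bigr).
\]
\item \emph{Lower bound (Pólya's lemma, already used for the Corollary to Theorem \ref{per_1}).} There is an absolute constant $c\in(0,1)$ such that, for $r$ large,
\[
M(r,g\circ f)\ge M\bigl(cM(r/2,f),g\bigr).
\]
\end{itemize}
Since $g\circ f=f\circ g$, combining them gives the key inequality
\[
M\bigl(cM(r/2,f),g\bigr)\le M\bigl(M(r,g),f\bigr)\qquad (r\ge r_0). \tag{$*$}
\]
Read as "the growth of $g$ at the larger radius $R=cM(r/2,f)$ is controlled by one application of $f$ to the growth of $g$ at $r$", $(*)$ is a recursion that should propagate a bound of the form $M(\cdot,g)\le M(\cdot,f^p)$ outward.

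Concretely, I will fix $r_0$ large and define radii by $r_{n+1}=cM(r_n/2,f)$. These increase to $\infty$ extremely fast, because $f$ is transcendental.
\begin{itemize}
\item \emph{Choice of $p$.} Since $M(r,f^k)\to\infty$ very rapidly in $k$ for fixed large $r$ (the iterates of a transcendental function escape on $|z|=r$ once $r$ is large), I will choose $p$ so that $M(r,g)< M(r/4,f^{p})$ for all $r\in[r_0,r_1]$.
\item \emph{Inductive step.} Assume $M(r,g)\le M(r/4,f^p)$ on $[r_{n},r_{n+1}]$. For $R\in[r_{n+1},r_{n+2}]$, write $R=cM(r/2,f)$ with $r\in[r_n,r_{n+1}]$. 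Then $(*)$ gives
\[
M(R,g)\le M\bigl(M(r/4,f^p),f\bigr)=M\bigl(r/4,f^{p+1}\bigr)
\]
(using $M(M(s,h),f)\ge M(s,f\circ h)$ in the right direction, or simply Hadamard convexity).
\item \emph{Converting back to radius $R$.} It remains to show $M(r/4,f^{p+1})\le M(R/4,f^{p})$, i.e.\ that one extra application of $f$ at radius $r/4$ is dominated by $f^p$ at radius $R/4=\tfrac c4 M(r/2,f)$. Writing $f^{p+1}=f^p\circ f$, this reduces to showing $M(r/4,f)\le \tfrac c4 M(r/2,f)$ in the appropriate iterated sense. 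That holds for large $r$ because $\log M(r,f)$ is convex in $\log r$ and grows faster than any multiple of $\log r$, so $M(r/2,f)/M(r/4,f)\to\infty$.
\end{itemize}
Induction then covers all $r\ge r_0$.

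The step I expect to be the main obstacle is this bookkeeping of the constants $c$, $1/2$ and $1/4$. The losses must not accumulate over infinitely many steps of the induction, which is why I build the slack ($r/4$ against $r/2$) into the inductive hypothesis itself and recover it at each step from the superpolynomial growth of $f$. The inequality $M(r/4,f^{p}\circ f)\le M(\tfrac c4M(r/2,f),f^p)$ needs care, since it uses monotonicity of $M(\cdot,f^p)$ together with the fact that $f(\{|z|=r/4\})$ lies in $\{|w|\le M(r/4,f)\}$. If the constants refuse to close, the fallback is to replace $p$ by $p+1$ once at the start, using $M(r,f^{p+1})\ge M(cM(r/2,f^p)\ldots)$ to absorb a fixed multiplicative loss.
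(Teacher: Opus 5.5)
\textbf{There is a genuine gap, in the inductive step.} From $(*)$ and the inductive hypothesis you correctly get $M(R,g)\le M\bigl(M(r/4,f^p),f\bigr)$. You then replace the right-hand side by $M(r/4,f^{p+1})$, and that replacement is not justified.

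The identity $M(M(s,h),f)=M(s,f\circ h)$ is false in general. For $f(z)=-e^z$ one has $M(s,f^2)<\exp(e^s)=M(M(s,f),f)$. The inequality that does hold, $M(s,f\circ h)\le M(M(s,h),f)$, says that $M(r/4,f^{p+1})$ is the \emph{smaller} of the two quantities, so it gives no upper bound for $M(R,g)$. Hadamard convexity does not help either: the point where $|f^p|$ is maximal on $|z|=r/4$ may be sent to a point where $|f|$ is far below its maximum modulus. Your ``converting back'' step, $M(r/4,f^{p+1})\le M(M(r/4,f),f^p)\le M(R/4,f^p)$, is correct, but it bounds the wrong quantity.

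What the induction actually needs is
\[
M\bigl(M(s,f^p),f\bigr)\le M\bigl(\tfrac c4 M(2s,f),f^p\bigr)\qquad (s\ge s_0,\ s_0 \text{ independent of } p).
\]
The left side is only controlled by the iterated maximum modulus $M^{p+1}(s,f)$. So you need a lower bound for the true maximum modulus $M(\cdot,f^p)$ in terms of iterated maximum modulus, with thresholds uniform in $p$. That is the missing idea; the $r/4$ versus $r/2$ slack does not supply it.

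\textbf{How to supply it with your own tools.} Put $\mu(t)=cM(t/2,f)$.
\begin{enumerate}
\item Pólya's lemma with \emph{inner} function $f$ gives $M(t,F\circ f)\ge M(\mu(t),F)$ for every entire $F$ and $t\ge R_1$, where $R_1$ depends only on $f$. Applying this successively to $F=f^{p-1},f^{p-2},\dots,f$ gives $M(t,f^p)\ge\mu^p(t)$ for all $t\ge R_1$ and all $p$.
\item Convexity of $\log M(e^x,f)$, whose slope tends to $\infty$, gives for fixed $K>2$ that $\mu(Kt)\ge KM(t,f)$ for $t\ge t_0$. Hence $\mu^j(Kt)\ge KM^j(t,f)$ for all $j$.
\end{enumerate}
These two facts prove the displayed inequality, since $\tfrac c4M(2s,f)\ge KM(s,f)$ for large $s$, and your induction then closes.

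It is simpler to telescope $(*)$ directly. For $r\in[r_0,\mu(r_0)]$ this gives $M(\mu^n(r),g)\le M^n(B,f)$, where $B=M(\mu(r_0),g)$, enlarged if necessary so that $B\ge t_0$. Choose $p$ with $\mu^p(r_0)\ge KB$. Then every $R=\mu^n(r)\ge r_0$ satisfies
\[
M(R,f^p)\ge\mu^{n+p}(r)\ge\mu^n(KB)\ge KM^n(B,f)>M(R,g).
\]

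\textbf{Your opening reduction is also off.} For linear $f$ the statement is false: $f(z)=z$ commutes with $g(z)=e^z$, yet $M(r,f^p)=r<e^r$. The theorem sits in Section~3, where all functions are transcendental, so you should simply assume $f$ and $g$ transcendental rather than appeal to a ``polynomial case''.
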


\noindent\large {The next  result of Baker was foundational in understanding the dynamical analysis of commuting entire functions. It help us understand how the iterative behaviour of one function is related to another function.}

\begin{theorem}\label{sec3,thm1'}
\large{(\cite{Baker_wander}, Lemma 4.4)
If $f$ and $g$ are permutable  entire functions and if $\infty$ is not a limit function of any subsequence of $\{f^n \}_{n\in\N}$ in any component of $F(f)$, nor of a subsequence of $\{g^n\}_{n\in\N}$ in any component of $F(g)$, then $J(f) = J(g).$
} 
\end{theorem}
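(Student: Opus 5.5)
By symmetry of the hypotheses, it suffices to prove $J(g)\subset J(f)$, or equivalently $F(f)\subset F(g)$. The swap of roles then gives the reverse inclusion. So I fix a component $U$ of $F(f)$ and aim to show that $\{g^n\}$ is normal on $U$. The natural tool is the commutation relation $f^n\circ g^k = g^k\circ f^n$, which holds for all $n,k$. It lets information about $\{f^n\}$ on $g^k(U)$ be transferred back to $U$.

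\textbf{Step 1: $g(F(f))\subset F(f)$.} Take $z_0\in U$. By hypothesis every limit function of $\{f^n\}$ on $U$ is finite, so on a small disc $D$ about $z_0$ every subsequence of $f^n$ has a further subsequence $f^{n_j}$ converging locally uniformly to a finite holomorphic limit (possibly a constant). Then $f^{n_j}\circ g = g\circ f^{n_j}$ converges locally uniformly on $D$, since $g$ is continuous on $\C$ and the convergence is to finite values. Hence $\{f^n\}$ is normal on $g(D)$, at least near points of the open set $g(D)$ when $g$ is nonconstant. So $g(U)$ lies in a Fatou component of $f$. The finiteness hypothesis is exactly what rules out $f^{n_j}\to\infty$, a case in which composing with $g$ would give no control. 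The same argument gives $f(F(g))\subset F(g)$.

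\textbf{Step 2: normality of $\{g^n\}$ on $U$.} Iterating Step 1, every $g^k(U)$ lies in some Fatou component of $f$. I would then use density of repelling periodic points in $J(g)$, recalled earlier in the paper, and argue by contradiction. Suppose $U$ meets $J(g)$. Then $U$ contains a repelling periodic point $w$ of $g$, say of period $p$. Since $\{g^n\}$ is not normal near $w$, by Montel's theorem the images $g^n(V)$ of any small neighbourhood $V\subset U$ of $w$ cover all of $\C$ except at most one point, as $n$ ranges. But each $g^n(V)$ lies in some Fatou component of $f$, and $J(f)$ is infinite. To reach a contradiction I need the sets $g^{np}(V)$ to lie in a single component. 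This holds because $g^{p}(U)$ and $U$ both contain the point $w$ and both lie in components of $F(f)$, so $g^{p}(U)\subset U$. Hence $\bigcup_n g^{np}(V)\subset U$ omits $J(f)$, which contains at least two points. By Montel's theorem $\{g^{np}\}$ is normal on $V$, and therefore so is $\{g^n\}$, since each $g^n$ equals $g^{r}\circ g^{np}$ with $0\le r<p$. This contradicts $w\in J(g)$. So $U\cap J(g)=\emptyset$, that is, $F(f)\subset F(g)$.

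The main obstacle is Step 1, specifically the case where the limit function of $f^{n_j}$ is constant or the iterates escape along some subsequence while staying bounded along others. One must check that normality transfers to $g(D)$ as an open neighbourhood of each of its points. I would handle this with the local normality criterion: a family is normal at $g(z_0)$ if it is equicontinuous near $g(z_0)$. Equicontinuity follows from writing a neighbourhood of $g(z_0)$ as $g(D)$ with $g$ an open map, and noting that $f^{n}\circ g$ is locally uniformly bounded on $D$ along subsequences. This step is where the exclusion of $\infty$ as a limit function is essential.
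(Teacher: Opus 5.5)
Your proof is correct, and Step 1 is the heart of the standard argument. Step 2, however, takes a long detour. The paper only cites Baker. The usual proof, which is also the mechanism the paper uses to show $F(f\circ g)\subset F(f)\cap F(g)$, ends right after Step 1. Since $g^n(F(f))\subset F(f)$ for every $n$, the family $\{g^n\}$ on the open set $F(f)$ omits every value of $J(f)$, and $J(f)$ has at least two points, so Montel's theorem gives $F(f)\subset F(g)$ at once.

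Your own sentence ``each $g^n(V)$ lies in some Fatou component of $f$, and $J(f)$ is infinite'' already contradicts the blow-up property you had just invoked. You do not need the sets $g^{np}(V)$ to lie in a single component. So the repelling periodic point, the invariance $g^p(U)\subset U$, and the passage from $\{g^{np}\}$ back to $\{g^n\}$ are all superfluous. Your route gains nothing and costs an appeal to Baker's density of repelling periodic points, a much deeper theorem than Montel.

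If you keep your route, the step ``$g^n=g^r\circ g^{np}$, hence normal'' needs justification. Composing with the transcendental map $g^r$ does not preserve locally uniform convergence to $\infty$. It works here only because $g^{np}(w)=w$ rules out $\infty$ as a limit on $V$. More simply, $|(g^{np})'(w)|\to\infty$ already contradicts normality of $\{g^{np}\}$ at $w$.

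Two smaller remarks. First, the worry in your last paragraph disappears once you note that the hypothesis makes $\{f^n\}$ locally uniformly bounded on $F(f)$. Indeed, a subsequence unbounded on a compact set would have a further subsequence converging either to a finite function or to $\infty$, and both are impossible. Take a disc $D$ with $\overline{D}\subset F(f)$ and set $R=\sup_{n,\,z\in D}|f^n(z)|$. Then $|f^n|\le\max_{|\zeta|\le R}|g(\zeta)|$ on the open set $g(D)$, and Montel applies.

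Second, the symmetric half needs $J(g)$ to contain at least two points, i.e.\ $g$ must be non-linear. The paper's standing assumption in Section 3 is that all functions are transcendental. ``$g$ nonconstant'' is not enough: the commuting pair $f(z)=\sin z$, $g(z)=-z$ from the paper satisfies the hypotheses, but $J(g)=\emptyset\neq J(f)$.
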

\noindent\large{ We illustrate above result with an example. Let us consider the  one parameter family of exponential maps,  $h(z) = \mu e^z , \mbox{ for }\mu\in \left(0,\frac{1}{e}\right).$ It is known  that Fatou set of $h,  F(h)$ consists of a simply connected completely invariant component $V$ (which is an attracting basin and the dynamics is attracted to the attracting fixed point contained inside $V$, see \cite{dev}). Using Theorem \ref{bak1}, if $g$ commutes with $h,$ then $g=h^n$ for some $n\in\N.$ Hypothesis of above theorem is clearly satisfied and $F(h)=F(h^n)=F(g).$\\
\noindent For a polynomial $P$ with degree at least two, there is always a Fatou component, namely the basin of attraction of $\infty$ where the iterates $P^n\to\ity$ as $n\to\ity.$ The hypothesis of the above theorem is not true. However, the  Julia set of two permutable polynomials are known to be the same as already discussed in the first section.\\

\noindent\large{Baker \cite[ Lemma 4.5]{Baker_wander}  established that for two permutable entire functions $f$ and $g$ with $g=f+d$ where $d$ is some constant, $J(f)=J(g).$  Poon and Yang in 1998 generalised this as follows:}
\begin{theorem}\label{yang}
\large{(\cite{Yang}, Lemma 2.1)
Suppose $f$ and $g$ are  entire functions such that $g(z)=af(z)+b, \ a,b \in \mathbb{C}, a\neq 0$. If $g$ permutes with $f$, then $J(f)=J(g).$ In addition, $|a|=1.$ 
}
\end{theorem}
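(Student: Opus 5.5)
The plan is to reduce everything to a single affine map that commutes with $f$. Write $L(w)=aw+b$, so $g=L\circ f$, and the hypothesis $f\circ g=g\circ f$ reads $f\circ L\circ f=L\circ f\circ f$. Thus $f\circ L$ and $L\circ f$ agree on $f(\C)$. Since $f$ is a transcendental entire function, $f(\C)$ omits at most one point, so it has accumulation points. By the identity theorem, $f\circ L=L\circ f$ on all of $\C$. It then follows that $L$ also commutes with $g$, because $g\circ L=L\circ f\circ L=L\circ L\circ f=L\circ g$. By induction we also get $g^n=L^n\circ f^n$ and $f^n=L^{-n}\circ g^n$.

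\emph{Step 1: $|a|=1$.} Suppose $a\neq 1$. Then $L$ has the fixed point $c=b/(1-a)$. Conjugating by the translation $z\mapsto z+c$ (which changes neither the commutation relation nor the form of the functions), we may assume $L(w)=aw$, so that $f(az)=af(z)$. Write $f(z)=\sum_k c_k z^k$ and compare coefficients: this gives $c_k(a^k-a)=0$ for all $k$. If $|a|\neq 1$, then $a^{k-1}\neq 1$ for every $k\ge 2$, so $c_k=0$ for all $k\ge 2$. Then $f$ is linear, which contradicts transcendence. If $a=1$ there is nothing to prove. Hence $|a|=1$ in all cases. (This step is needed only for the statement $|a|=1$; the equality of Julia sets below does not use it.)

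\emph{Step 2: $L$ preserves $F(f)$ and $F(g)$.} From $f^n\circ L=L\circ f^n$ we get $f^n=L\circ f^n\circ L^{-1}$, and post-composition with the fixed affine map $L$ preserves normality. Hence $\{f^n\}$ is normal near $z$ if and only if it is normal near $L^{-1}(z)$. So $L(F(f))=F(f)$ and $L(J(f))=J(f)$. The same argument applies to $g$, since $L$ commutes with $g$ as well.

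\emph{Step 3: $F(f)\subset F(g)$ via Montel.} I expect this to be the main obstacle. The direct approach, passing convergent subsequences from $f^{n}$ to $g^n=L^n\circ f^n$, runs into trouble when $a=1$: then $L^n(w)=w+nb$ drifts to infinity, and one cannot simply pass to a subsequence. So I would avoid limits and use Montel's theorem instead. Take $z\in F(f)$ and a disc $V\ni z$ inside $F(f)$. Then $f^n(V)\subset F(f)$ by invariance of the Fatou set, and by Step 2, $g^n(V)=L^n(f^n(V))\subset F(f)$. Consequently every $g^n$ omits $J(f)$ on $V$. For a transcendental entire function $J(f)$ is infinite, in particular it contains at least two points, so Montel's theorem shows that $\{g^n\}$ is normal on $V$. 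Hence $F(f)\subset F(g)$.

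For the reverse inclusion, use $f=L^{-1}\circ g$, where $L^{-1}$ commutes with $g$, and note that $g$ is also transcendental, so $J(g)$ is infinite. The symmetric argument gives $F(g)\subset F(f)$. Therefore $F(f)=F(g)$, and so $J(f)=J(g)$.
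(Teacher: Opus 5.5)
Your proof is correct and complete. Note that the paper does not prove this statement at all: it is quoted from Poon--Yang without argument, so there is no in-paper proof to compare against, and your argument supplies a self-contained one. Its ingredients do match tools the paper uses elsewhere:
\begin{itemize}
\item Your iterate formula $g^n=L^n\circ f^n$ is exactly the case $q(z)=z$ of the Wang--Yang lemma reproduced later, $g^n=a^nf^n+b(a^{n-1}+\cdots+1)$.
\item Your Step 3 is the same Montel device as the paper's proof of $F(f\circ g)\subset F(f)\cap F(g)$. For a disc $V\subset F(f)$, the images $g^n(V)$ stay in the $L$-invariant set $F(f)$. So the $g^n$ omit the (at least two-point) set $J(f)$ on $V$.
\end{itemize}
Compared with the special case $g=f+d$ that the paper attributes to Baker, your argument handles an arbitrary affine $L$ in one stroke. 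It treats uniformly the cases $a=1$, where $L^n$ drifts to $\infty$, and $|a|=1$, $a\neq 1$, where the $L^n$ are rotations about $b/(1-a)$, with no bookkeeping of limit functions. You are right that a naive limit-passing argument breaks down when $a=1$. Obtaining $|a|=1$ by moving the fixed point of $L$ to the origin and comparing Taylor coefficients in $f(az)=af(z)$ is also clean, and you correctly note that the Julia-set equality does not depend on it.

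Your use of transcendence, or at least non-linearity, in Step 1 is genuinely needed and consistent with the section's standing assumption. For linear $f$ the conclusion $|a|=1$ fails: $f(z)=2z$ and $g=2f$ commute, with $a=2$. The whole argument also goes through verbatim for polynomials of degree at least two.
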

\noindent\large{The conclusion of  Theorem \ref{sec3,thm1'} may still hold even if hypothesis is relaxed. For instance, consider $f(z)=z+1+e^{-z}$ and $g(z)=f(z)+2\pi i.$ There is a Baker domain for $f$ containing right half plane (\cite{d1} Example 3.1) } and $F(f)=F(g),$ \cite[Lemma 4.5]{Baker_wander}.

\begin{remark}
Equality of Fatou sets of entire functions $f$ and $g$ may still hold even if they are not permutable. Consider $f(z)= \sin z$ and $g(z)=f(z)+2\pi.$ Note that, $f\circ g \neq g\circ f.$ However, $F(f)=F(g)$ since $g^n(z)=f^n(z)+2\pi$ for each $z.$
\end{remark}

It was shown in (\cite{hua1} Theorem 1) that for two permutable  entire functions in Eremenko-Lyubich class $\mathcal{B}, $ their Fatou sets(respectively Julia sets) are same. In particular, $f, g\in \mathcal S$ commuting, then $J(f)=J(g)$ and hence, $F(f)=F(g).$


\noindent \large{ For two commuting  entire functions $g$ and $h$ with $F(g) = F(h),$ Baker posed a problem that if $V$ is
 a component of $F(g)$ then will $V$ be also a component of $F(h)$ of the same type. This problem has an affirmative answer when $g$ and $h$ are of finite type, see \cite{Ren_Li}. We discuss this in the next result which was proved by Ren and Li in 1997 and which discussed some features of permutable  functions in class $\mathcal{S}$.}

\begin{theorem} \label{S_permu}
\large{\cite{Ren_Li}
Let $f, g \in \mathcal{S}$ and are permutable. Then 

\begin{enumerate}
\item[(i)] {$J(f) = J(g)$}
\item[(ii)] {If $D$ is an (super)attracting domain, a parabolic domain or a Siegel disk of $f$, then $D$ is also a (super)attracting , a parabolic domain or a Siegel disk of $g$, respectively.} 
\end{enumerate}
} 
\end{theorem}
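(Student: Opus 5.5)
For part (i) I would not build a new argument. Since $\mathcal S\subset\mathcal B$, the result of \cite{hua1} quoted just before the statement already gives $J(f)=J(g)$ for permutable functions in $\mathcal B$. If I wanted a self-contained route, I would instead show that $g(J(f))\subset J(f)$ and $g(F(f))\subset F(f)$. The step $g(F(f))\subset F(f)$ is the nontrivial inclusion. It needs the finite-type hypothesis: functions in $\mathcal S$ have no wandering domains and no Baker domains (Eremenko--Lyubich, Goldberg--Keen), so every Fatou component of $f$ is eventually periodic of one of types (i)--(iii) in Theorem \ref{sec3, thms}. Given that, one checks the hypothesis of Theorem \ref{sec3,thm1'}: in no component of $F(f)$ or of $F(g)$ is $\infty$ a limit function. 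Theorem \ref{sec3,thm1'} then yields $J(f)=J(g)$.

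For part (ii), set $F=F(f)=F(g)$ and let $D$ be a component of $F$ that is an attracting, parabolic or Siegel domain of $f$. Since $f$ and $g$ commute, $g(D)$ lies in some Fatou component $D'$. Moreover, $f^n\circ g=g\circ f^n$ on $D$, so the limit functions of $\{f^n\}$ on $D'$ are the images under $g$ of those on $D$. Concretely, if $f^{nm}\to w_0$ on $D$ (with $w_0$ a periodic point in $\overline D$), then $f^{nm}\to g(w_0)$ on $D'$. So $D'$ is a component of the same kind for $f$, and $g$ permutes the finitely many components of the relevant $f$-cycles and basins.

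The key step is to show that $D$ is $g$-periodic, not merely $g$-preperiodic or wandering. I would argue as follows. The periodic point $w_0$ is attracting or parabolic, and $g$ maps the set of such periodic points of $f$ (of a fixed period) to itself. In $\mathcal S$ there are only finitely many nonrepelling cycles, since each attracts a singular value. Hence some iterate $g^k$ fixes $w_0$, and then also fixes the component containing it (or, in the parabolic case, adjacent to it). For a Siegel disk, use the Siegel centre instead of $w_0$, and use that $g$ maps $f$-invariant Siegel disks to $f$-invariant rotation domains. Once $g^k(D)\subset D$, the domain $D$ is a periodic component of $F(g)$, and its type for $g$ is read off from the dynamics of $g^k$ on $D$.

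To identify the type, observe that $g^k$ commutes with the first-return map $f^m$ on $D$.
\begin{itemize}
\item \emph{Attracting case.} $g^k$ fixes $w_0$. The family $\{g^{kn}\}$ restricted to $D$ has limits that are constant or univalent. Commutation with $f^m$, which contracts everything to $w_0$, forces any nonconstant limit to be a local automorphism commuting with $f^m$. This should be excluded by Ritt-type linearization arguments unless $g^k$ is itself attracting at $w_0$, using that $|(g^k)'(w_0)|<1$ is forced by the fact that $g$ cannot have a Siegel disk or Baker domain containing a point attracted by $f$.
\item \emph{Parabolic case.} Handled similarly, with Fatou coordinates.
\item \emph{Siegel case.} Conjugate by $\psi$: $g^k$ becomes a self-map of $\mathbb D$ commuting with an irrational rotation, hence itself a rotation $z\mapsto cz$. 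Using $J(g)=J(f)=\partial$-data, one gets $|c|=1$ with $c$ not a root of unity, so $D$ is a Siegel disk of $g$.
\end{itemize}

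The main obstacle I expect is the attracting and parabolic dichotomy: ruling out that $D$ is a Siegel disk or parabolic domain of $g$ while being attracting for $f$. I would first try to compare multipliers at the common fixed point $w_0$. Differentiating $f^m\circ g^k=g^k\circ f^m$ at $w_0$ gives no information, since the two sides agree trivially. So I would instead use the fact that $g^k$ maps the linearizing (Koenigs or Fatou) coordinate of $f^m$ to a multiple of itself, and deduce the type from that.
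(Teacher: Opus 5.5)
The paper gives no proof of this theorem; it only cites Ren and Li. Your part (i) is fine. You can either quote the class $\mathcal B$ result stated just before the theorem, or argue directly: $f,g\in\mathcal S$ have no wandering domains, Baker domains or Herman rings, so $\infty$ is never a limit function on a Fatou component, and Theorem~\ref{sec3,thm1'} applies. Part (ii), however, has two genuine gaps.

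\emph{First gap: periodicity of $D$ under $g$.} The step ``hence some iterate $g^k$ fixes $w_0$'' does not follow. You have shown that $g$ maps the finite set of attracting (resp.\ parabolic, Siegel) cycles of $f$ into itself. But a self-map of a finite set is only eventually periodic. If $g$ identified two attracting cycles of $f$, some attracting cycle would be missed by $g$ altogether. Its basin components would then be strictly preperiodic components of $g$, and (ii) would fail. So injectivity is the real content here, and finiteness does not supply it. (Also, $g$ sends points of period $p$ to points of period dividing $p$, so ``a fixed period'' is not preserved a priori.)

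What is missing is a surjectivity argument, which goes as follows.
\begin{enumerate}
\item Take a component $D$ in an attracting cycle of $f$. Pick $z\in D$ not omitted by $g$, and $w$ with $g(w)=z$. Then $w\in F(g)=F(f)$ lies in some Fatou component $W$.
\item Since $f$ has no wandering domains, $f^n(W)$ lies in a periodic component $V$ for some $n$. Let $m$ be a common multiple of the relevant periods.
\item Passing to the limit in $g\circ f^{mn_j}=f^{mn_j}\circ g$ on $V$ shows that the limit function on $V$ is a constant $y$, with $g(y)$ in the cycle of $D$.
\item $V$ cannot be a Siegel disk, since its limit functions would be nonconstant. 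It cannot be parabolic, since then $y\in J$ while $g(y)\in F$.
\end{enumerate}
Hence $g$ induces a surjection, and therefore a permutation, of the finitely many attracting cycles of components. Comparing cycle lengths then shows that $g$ permutes the components themselves. The same argument works for parabolic and Siegel cycles. Only after this do you get $g^k(D)\subset D$ and $g^k(w_0)=w_0$.

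\emph{Second gap: identifying the type for $g$.} This is not actually carried out, and in the attracting case the local route you propose cannot work. In the Koenigs coordinate of $f^m$, the map $g^k$ is just $w\mapsto cw$. The case $|c|=1$ with $c$ not a root of unity is perfectly compatible with commutation, so linearization alone decides nothing. Invoking that $g$ cannot have a Siegel disk containing a point attracted by $f$ is circular.

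A global argument is needed, and it is short. Since $g^k(w_0)=w_0\in D$, the component $D$ is either an attracting basin or a Siegel disk of $g^k$. In the latter case, conjugate $f^m$ by the Siegel coordinate: it fixes $0$ and commutes with an irrational rotation, so it equals $z\mapsto (f^m)'(w_0)z$. Then $f^m(\overline D)$ is a compact subset of $D$, which contradicts $\emptyset\ne f^m(\partial D)\subset J(f)$. This also covers the superattracting case, where there is no Koenigs coordinate.

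In the Siegel case, the same computation only gives $g^k$ as $z\mapsto cz$ with $|c|\le 1$, not a rotation. You must exclude $|c|<1$ by the same boundary argument, using $J(g)=J(f)$. You must exclude roots of unity because then $g^{kq}$ would be the identity.

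In the parabolic case, Fatou coordinates are unnecessary. If $D$ were attracting or Siegel for $g^k$, with fixed point $p\in D$, then $f^m(p)$ would be a $g^k$-fixed point in $D$, hence equal to $p$. That is impossible in a parabolic basin of $f^m$.
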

\begin{remark}
We now provide an example of permutable entire functions $g$ and $h$ outside class $\mathcal{S}$ in which Fatou component does not have similar dynamical properties for $g$ and $h.$ 
\end{remark}
\begin{example}
Suppose $g(z)=-\sin z+z, \, h(z)=-\sin z+z+2\pi.$ Clearly, $g\circ h =h\circ g.$
Using \cite[ Lemma 4.5]{Baker_wander} , $F(g)=F(h).$ For \(m\in \mathbb{N}\), $g(2m\pi)=2m\pi$ and $g'(2m\pi)=0.$
Thus, \(\{2m\pi : m\in\mathbb{N}\}\) are attracting fixed points of $g$ for $m \in \N$.
Let \(V_{2m\pi}\) be the attracting domain containing the point \(2m\pi\).
As \(F(g)=F(h)\), \(V_{2m\pi}\) is also a component of \(F(h)\).
Observe that \(V_{2m\pi} \cap V_{2m'\pi}= \phi\) for $m \neq m'$ and
\begin{align*}
	h(2m\pi)
	&=2m\pi+2\pi \\
	& =2(m+1)\pi ,
\end{align*}
which gives
\[
h(V_{2m\pi})=V_{2(m+1)\pi}.
\]
Hence, \(V_{2m\pi}\) is a wandering domain of \(h\).
This shows that the dynamical behaviour of two commuting entire functions can be very different.

\end{example}
\noindent \large{Next theorem was proved by Langley in  1999 which provides a relation between wandering domains of permutable entire functions and equality of their Julia sets.}
\begin{theorem}\cite{Langley} 
Let $f$ and $g$ be permutable entire functions of finite order. If $f$ and $g$ have no wandering domains, then $J(f)=J(g).$ 

\end{theorem}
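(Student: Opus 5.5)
The plan is to prove $F(f)\subset F(g)$; the reverse inclusion follows by symmetry, and complements give $J(f)=J(g)$. By Theorem \ref{sec3,thm1'} (Baker's Lemma 4.4), it suffices to show that $\infty$ is not a limit function of $\{f^n\}$ on any component of $F(f)$, and likewise for $g$. Hence the strategy is: use the hypotheses (finite order, no wandering domains) to exclude Baker domains, and also to exclude the pre-images of Baker domains, on which iterates could tend to $\infty$. Theorem \ref{sec3,thm1'} then applies directly.

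First I reduce to periodic components. Let $U$ be a component of $F(f)$ with $f^{n_k}\to\infty$ locally uniformly on $U$. Since $f$ has no wandering domains, $U$ is eventually periodic, so $f^{q}(U)\subset V$ for some periodic component $V$ of period $m$. By the classification in Theorem \ref{sec3, thms}, if $V$ is an attracting domain, a parabolic domain or a Siegel disk, then the limit functions on $V$ are finite constants or non-constant functions. Therefore $\infty$ can be a limit function only if $V$ is a Baker domain (the transcendental case (iv)). So it is enough to show that a permutable pair of finite-order functions without wandering domains cannot produce a Baker domain for which $J(f)\neq J(g)$.

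The alternative, more robust route avoids fully excluding Baker domains. It uses the commutation relation $f\circ g=g\circ f$, which gives $g(F(f))\subset F(f)$ up to the standard argument. Concretely, $g$ maps $F(f)$ into $F(f)$ whenever $g^{-1}$ of a Fatou component does not meet $J(f)$; one shows normality of $\{f^n\circ g\}=\{g\circ f^n\}$ on $F(f)$. On a component where $f^n$ has only finite limit functions this is immediate. On a Baker domain $V$ of $f$, $g\circ f^{n}$ could fail to be normal only through growth of $g$ along the escaping orbit. Here finite order enters. Via Baker's growth comparison $M(r,g)<M(r,f^p)$ and Langley's use of Wiman--Valiron/minimum-modulus estimates for finite-order functions, one shows that $g$ maps the absorbing domain of $V$ into a Fatou component of $f$, namely $f^j(V)$ for some $j$. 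Then $\{f^n\circ g\}$ is normal on $V$. Normality of $\{f^n\circ g\}$ at $z$ implies that $g(z)\in F(f)$ unless $g(z)$ is one of at most two exceptional values, and density of $J$ handles these. So $g(F(f))\subset F(f)$. Next I upgrade this to $F(f)\subset F(g)$. A component $U$ of $F(f)$ is eventually periodic, and $g$ permutes the finitely many components of its cycle up to finitely many steps. Consequently $\{g^n\}$ on $U$ takes values in a fixed finite union of components of $F(f)$, which omit $J(f)$, a set of at least two points. Montel's theorem then gives normality of $\{g^n\}$ on $U$.

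The main obstacle is the step where $g$ maps the absorbing domain of a Baker domain of $f$ back into the $f$-cycle. There $g$ could a priori send $V$ into a wandering or different component, and ruling this out needs the finite-order growth estimates. I would try first to show that on the Baker domain the points $f^n(z)$ grow at most like $|f^n(z)|\le C|f^{n-1}(z)|$. This is a standard estimate in Baker domains, since the hyperbolic metric gives linear growth. Finite order of $g$ then bounds $|g(f^n(z))|$ polynomially in $\log$-terms, which keeps the family $\{g\circ f^n\}$ from oscillating and forces normality.
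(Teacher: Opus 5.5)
\emph{Verdict: there is a genuine gap.} The architecture of your second route is right, but the step carrying the whole content of the theorem is asserted, not proved, and the mechanism you sketch for it would not work.

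\emph{The first route cannot work.} Take $f(z)=z+1+e^{-z}$, which has order $1$, no wandering domains and a Baker domain, together with $g=f$ (or $g=f+2\pi i$). This pair satisfies all the hypotheses. So finite order plus absence of wandering domains cannot exclude Baker domains, and Theorem~\ref{sec3,thm1'} is unavailable. Your sentence ``it is enough to show \dots'' merely restates the theorem.

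\emph{What is correct in the second route.} Components on which all limit functions of $\{f^n\}$ are finite are handled correctly. Once $g(F(f))\subset F(f)$ is known, Montel finishes at once, because $g^n(F(f))\subset F(f)$ omits $J(f)$. Your claim that $g$ permutes the components of an $f$-cycle is unjustified at that stage, but it is also unnecessary.

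\emph{The gap is the (pre-)Baker case.} ``One shows that $g$ maps the absorbing domain into $f^j(V)$'' is an appeal to Langley, i.e.\ to the statement being proved. Your heuristic does not replace it. The hyperbolic estimate $\log|f^n(z)|=O(n)$ plus $\rho(g)<\infty$ gives $\log\log|g(f^n(z))|=O(n)$, but that is only an upper bound, and an upper bound says nothing about normality of $\{g\circ f^n\}$ on $V$. Non-normality at $z$ means precisely $g(z)\in J(f)$. To exclude that, you need a \emph{lower} growth bound for points lying densely in $J(f)$.

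\emph{The missing idea is the fast escaping set.} Two facts supply the lower bound:
\begin{itemize}
\item $J(f)\subset\overline{A(f)}$ (Bergweiler--Hinkkanen);
\item $A(f)\subset T(f)$, because $\log M(r,f)/\log r\to\infty$; this is the same computation as the paper's proof of $I_0\subset T$.
\end{itemize}
Now suppose the open set $g(U)$ met $J(f)$, for a non-wandering component $U$ of $F(f)$. Then there is $z_1\in U$ with $g(z_1)\in A(f)$. Hence $f^n(g(z_1))=g(f^n(z_1))$ satisfies $\log\log|f^n(g(z_1))|/n\to\infty$. This contradicts your bound $\log\log|g(f^n(z_1))|=O(n)$. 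With this added your route is complete. It uses finite order of $g$ here, and finite order of $f$ for the symmetric inclusion.

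\emph{The paper's material gives an even shorter proof.} It needs neither the case analysis nor finite order. By Baker's inequality $M(r,g)<M(r,f^p)$, the condition $|g(f^{n+L}(z_1))|>M^n(R,f)$ forces $|f^{n+L}(z_1)|>M^{n-p}(R,f)$ for large $n$. So $g(z_1)\in A(f)$ implies $z_1\in A(f)$; this is the heart of Theorem~\ref{not_fast_escape}. When $f$ has no wandering domains, $A(f)\subset J(f)$, so $z_1\in U\subset F(f)$ is impossible. Hence $g(F(f))\subset F(f)$, and Montel plus symmetry give $J(f)=J(g)$. This is the Bergweiler--Hinkkanen corollary quoted in the paper, of which Langley's theorem is a special case.
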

\noindent\large{Functions of Speiser class $\mathcal{S}$ do not have wandering domains \cite{EL}. There are functions outside Speiser class $\mathcal{S}$ that also do not having wandering domains. For instance, the function $f(z)=z+1+e^{-z}$ is infact, outside $\mathcal{B}$ and has no wandering domains.}
\\
%

\noindent \large{Moving on, the results that were proved by T. W. Ng in 1999 showed that most of the permutable entire functions  have their Julia sets equal. 
The method developed in his paper is useful in solving functional equations.
He provided affirmative answers to Baker's \textbf{Problem A} for a large class of entire functions
including $e^z+P(z)$ and $\sin z+P(z)$ where P is a non-constant polynomial. In fact, he 
proved that for any non-linear entire function $h$ which commutes with $g$ in this class,
$h(z) = cg^n(z) + d$ for some $0\neq c, \, d\in\C.$ Observe that $g^n\circ h=h\circ g^n.$ Using Theorem \ref{yang} we obtain $J(g)=J(g^n)=J(h).$}
\\
\noindent\large{Recall that an entire function $G$ is prime in the entire sense if  $G=g\circ h$ for some entire functions $g, h$ implies either $g$ or $h$ is linear. Also, an entire function $G$ is left-prime in the entire sense if whenever $G=g\circ h$ for some entire functions $g, h$ then  $g$  is linear. The following result of Gross and Yang \cite{Gross1} shows that $e^z+z$ is a prime function.
\begin{theorem}\label{gross-yang}\cite[Corollary]{Gross1}
Suppose $p(z)$ and $q(z)$ are two polynomials satisfying $q(z)\not \equiv 0$ and $p(z)$ is non-constant. Then $q e^z+p(z)$ is prime.
\end{theorem}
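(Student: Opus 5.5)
Suppose $qe^z+p(z)=f\circ g$ with $f,g$ entire. We must show that $f$ or $g$ is linear. The plan is to split according to whether $f$ and $g$ are polynomials or transcendental, and to rule out every case except the two linear ones.

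\emph{Case 1: $f$ and $g$ are both transcendental.} The composition $f\circ g$ then has infinite order or grows very fast. A P\'olya-type growth argument (as in \cite{polya}) shows that $f(g)$ of finite order with $g$ transcendental forces $f$ to have order zero. But $G=qe^z+p$ has order $1$. Nevanlinna theory makes this precise. Write $G'-G=(q'e^z+p')-p=q'e^z+p'-p$. Better, note that $G-p$ has only finitely many zeros (those of $q$), so $f(g)-p$ has finitely many zeros. The key tool is the Nevanlinna second main theorem applied to $f\circ g$ with the small functions $p$ and $p+c$. For $w$ with $f(w)=c$ we get $G=c$ on $g^{-1}(w)$, and $G=c$ has roughly $T(r,G)\sim r|\text{lead}|/\pi$ solutions. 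The transcendental inner function $g$ forces the counting function of the zeros of $G-c$ to dominate $T(r,G)$ times an unbounded factor, unless $f$ omits values. I would rather run the standard Gross--Yang argument. Since $G-p$ has finitely many zeros, every zero of $f-a$ gives preimages under $g$. Picking Picard-type values, the map $f$ has at most one finite Picard exceptional value, so $f-a$ has infinitely many zeros $w_k$ for generic $a$. Each equation $g=w_k$ then has infinitely many solutions, giving $N(r,1/(G-a))$ growing faster than $T(r,G)=O(r)$. This is a contradiction, handled via a comparison $T(r,f\circ g)/T(r,g)\to\infty$ (Clunie's lemma).

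\emph{Case 2: $f$ is transcendental and $g$ is a polynomial of degree $d\ge2$.} Then $G(z)=f(g(z))$ is invariant under the local branches $z\mapsto \omega$ satisfying $g(\omega)=g(z)$. Take a large $z$ and a nontrivial branch $\omega(z)=\zeta z+O(1)$ with $\zeta^d=1$, $\zeta\neq1$. From $q(\omega)e^{\omega}+p(\omega)=q(z)e^z+p(z)$ on a sector, compare growth along rays where $\operatorname{Re} z\to+\infty$ but $\operatorname{Re}(\zeta z)$ is bounded or negative. The left side is polynomial-sized while the right side is exponentially large, which is a contradiction.

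\emph{Case 3: $f$ is a polynomial of degree $d\ge2$ and $g$ is transcendental.} Then $G-p=f(g)-p$ has finitely many zeros. Compare $T(r,G)=d\,T(r,g)+O(\log r)$, so $g$ has order $1$. Factor $f(w)-f(w_0)$ for a generic value and use that $G-c$ has zeros with counting function $\sim T(r,G)$. More decisively, $G'=(q+q')e^z+p'$ equals $f'(g)g'$. The polynomial $f'$ has a zero $w_0$, and at points where $g=w_0$ we have $G'=0$ and $G=f(w_0)$. These must be simultaneous zeros of $G-f(w_0)$ and $G'$. Eliminating $e^z$ from these two equations gives $(q+q')(f(w_0)-p)+qp'=0$, a polynomial equation. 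So such points are finitely many unless this polynomial vanishes identically. Hence either $g$ omits $w_0$ up to finitely many points, or the polynomial is identically zero. If $g=w_0+Pe^{h}$, then $h$ is linear by order considerations. Substituting into $f(g)=qe^z+p$ and comparing exponential terms $e^{kh}$, with $k=1,\dots,d$ and $d\ge2$, forces $\deg f=1$, a contradiction. In the identically-zero subcase, degree counting gives $f(w_0)-p=-qp'/(q+q')$, which is impossible for nonconstant $p$ unless $p$ is constant. This uses the hypothesis that $p$ is nonconstant.

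I expect Case 1 to be the main obstacle. Beyond that, the remaining cases amount to bookkeeping of exponential terms.
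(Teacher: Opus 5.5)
Your Cases 2 and 3 essentially work; the paper only quotes this result from Gross--Yang, so there is no proof there to compare against. Case 1 (both $f$ and $g$ transcendental), which you flag as the main obstacle, contains no valid argument.

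The contradiction you propose cannot occur. By the first main theorem, $N(r,1/(G-a))\le T(r,G)+O(1)$ for every $a$. So infinitely many solutions of each equation $g=w_k$ can never make $N(r,1/(G-a))$ outgrow $T(r,G)$. Clunie's $T(r,f\circ g)/T(r,g)\to\infty$ only gives $T(r,g)=o(r)$, which is not contradictory either. More fundamentally, your Case 1 sketch only uses growth data ($G$ of order $1$, $\rho(f)=0$ by P\'olya, Clunie), and these are compatible with a genuine factorization. Take $f(w)=\prod_{k\ge1}(1+w/2^k)$, which has order $0$, and $g(z)=\cos\sqrt z$. Evaluating at $z=-r$ gives $\log M(r,f\circ g)=\log M(\cosh\sqrt r,f)\sim r/(2\log 2)$, so $f\circ g$ has order exactly $1$. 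Hence some specific feature of $qe^z+p$ must enter, and in your Case 1 it never does.

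The missing step is the elimination you already carry out in Case 3:
\begin{itemize}
\item By P\'olya, $\rho(f)=0$, so $f'$ is transcendental of order zero. By Hadamard it therefore has infinitely many distinct zeros $w_1,w_2,\dots$.
\item At any $z$ with $g(z)=w_k$ we have $G(z)=f(w_k)$ and $G'(z)=0$. So $z$ is a root of $(q+q')\bigl(f(w_k)-p\bigr)+qp'$.
\item This polynomial is not identically zero, since its two summands have degrees $\deg q+\deg p$ and $\deg q+\deg p-1$. This is where $p$ nonconstant is used.
\item Hence the transcendental function $g$ takes each of infinitely many values $w_k$ only finitely often, contradicting Picard's theorem.
\end{itemize}

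One correction in Case 3: comparing exponential terms does not force $\deg f=1$. For example, $w^2$ composed with $e^{z/2}$ is $e^z$. What the comparison actually gives is that the coefficients of $e^{0\cdot z}$ must agree, i.e.\ $p\equiv f(w_0)$. The contradiction again comes from $p$ being nonconstant.
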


\begin{theorem}\label{tw1}
\large({\cite{tw}, Theorem 1)
Let $f$ be an  entire function which satisfies the following conditions:
\begin{enumerate}[label=(\roman{*})]
\item{$f$ is not of the form $H \circ Q$ where $H$ is periodic entire and $Q$ is a polynomial}
\item{$f$ is left prime in entire sense}
\item{$f'$ has at least two distinct zeros}
\item{For each critical value $w,$ the number of points $z$ such that $f(z)=w$ and $f'(z)=0$ is finite.}
\item{The orders of zeros of $f'(z)$ are bounded by $M$ for some $M \in \mathbb{N}$.}
\end{enumerate}
Let $g$ be a non-linear entire function which permutes with $f.$ Then $g(z)=af^n(z)+b,$ where $a^k=1$ for some $k$ and $b \in \mathbb{C}.$ Consequently, $J(f)=J(g).$
} 
\end{theorem}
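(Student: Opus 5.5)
The plan is to exploit the critical structure of $f$, using the functional equation $f\circ g=g\circ f$ together with conditions (i)--(v). We then reduce to the case where $g$ is a linear function composed with an iterate of $f$. Once $g=af^n+b$ is established, the final assertion $J(f)=J(g)$ follows as indicated before the statement. Since $g$ commutes with $f$, it commutes with $f^n$. Theorem \ref{yang}, applied to the pair $f^n$ and $g=af^n+b$, gives $J(g)=J(f^n)$, and $J(f^n)=J(f)$ is standard.

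For the main claim, differentiate the commutation relation to get $f'(g(z))g'(z)=g'(f(z))f'(z)$. First, I would show that $g$ maps the critical set of $f$ into critical points of $f$, up to controlled exceptions, and that $g$ preserves the set of critical values of $f$. The tools are the equation $f'(g)g'=g'(f)f'$, condition (iv), which gives finitely many critical points over each critical value, and condition (v), which bounds the multiplicities. Condition (iii) guarantees at least two distinct critical points, so the critical set is rich enough. The aim is to show that $f$ and $g$ have a common right factor in a suitable sense.

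Next, write $g=G\circ f^m$ with $m$ maximal, i.e.\ $G$ is not of the form $G_1\circ f$. Such a maximal $m$ exists because of the growth comparison in Baker's theorem: $M(r,g)<M(r,f^p)$ for large $r$, so $m\le p$. Commutation then gives $f\circ G\circ f^m=G\circ f^{m}\circ f$. Since $f^m$ is surjective onto $\C$ up to at most one omitted value, this yields $f\circ G=G\circ f$. So we may assume $g$ itself does not factor through $f$ on the right.

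The key step, and the main obstacle, is showing that such a $g$ must be linear. From $f\circ g=g\circ f$, the composite $g\circ f$ factors with $f$ as its left factor. Since $f$ is left prime by (ii), we want to compare the two factorizations $f\circ g$ and $g\circ f$ and invoke uniqueness of factorization. Condition (i) is needed here: it excludes the periodic-polynomial ambiguity, where nontrivial relations $H\circ Q$ allow non-unique factorizations. I expect to use the critical-point data from (iii)--(v) to show that $g$ permutes the fibres $f^{-1}(w)$ over critical values. That would force $g$ to be either linear or of the form $L\circ f$. The second case is excluded by the maximality of $m$, so $g=L$ is linear.

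Finally, a linear $L(z)=az+b$ commuting with the transcendental $f$ is analysed as in the discussion after Baker and Iyer's result: it must have $a^k=1$. Indeed, if $|a|\neq1$, iterating the identity $f(az+b)=af(z)+b$ would force polynomial growth of $f$. Moreover, $a$ must be a root of unity, since the finite critical fibres from (iv) are permuted by $L$. Combining this with $g=G\circ f^m=L\circ f^m$ gives $g=af^m+b$ with $a^k=1$. The hardest and least certain part is the factorization and uniqueness step; there I would first try Ng's technique of counting preimages of critical values with multiplicity.
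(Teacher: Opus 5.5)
Your deduction of $J(f)=J(g)$ from $g=af^n+b$ is the same as the paper's: Theorem \ref{yang} applied to the commuting pair $f^n,g$, together with $J(f^n)=J(f)$. The paper does not prove the representation $g=af^n+b$ itself; it cites Ng.

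Your reduction to $g=G\circ f^m$ with $m$ maximal and $f\circ G=G\circ f$ is sound. Bounding $m$ needs a P\'olya-type lower estimate for $M(r,G\circ f^m)$ alongside Baker's upper bound. The last step is also fine, but the clean reason is coefficient comparison. For $a\neq 1$, conjugate $L$ to $z\mapsto az$; then $f(az)=af(z)$ gives $c_k(a^k-a)=0$ for every Taylor coefficient. This handles $|a|\neq 1$ and irrational rotations at once, and condition (iv) plays no role there.

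\textbf{The gap.} The core step is that a nonlinear $G$ commuting with $f$ must have $f$ as a right factor. You give no argument for it, and the route you sketch aims at false intermediate statements.

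\emph{No uniqueness of factorization.} There is no such theorem for entire functions to invoke; for example $z^2\circ e^z=e^z\circ 2z$. Left primeness is a statement about factorizations of $f$. It says nothing by itself about the two decompositions $F=f\circ G=G\circ f$.

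\emph{Your first step fails on the theorem's own conclusion.} If $g=af^n+b$ with $n\ge 1$, then $g'=a(f^n)'$ vanishes at every critical point of $f$, and $g$ collapses each fibre $f^{-1}(w)$ to a single point. It does not ``permute'' fibres. That $g$ maps $f$-fibres into $f$-fibres is automatic from $f\circ g=g\circ f$ and carries no information.

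\emph{A concrete counterexample.} Take $f(z)=z+e^z$, which satisfies (i)--(v), and $g=f$.
The critical points are $(2k+1)\pi i$ and the critical values are $-1+(2k+1)\pi i$.
At every critical value, $f'=1-e^{-1}\neq 0$.
The map $f$ sends each critical value to a point with real part $-1-e^{-1}$.
So $g$ sends no critical point of $f$ to a critical point of $f$, and $g$ maps the set of critical values of $f$ disjointly from itself. Both claims in your first step fail.

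\emph{What to aim for instead.} In $f'(g)g'=g'(f)f'$, the statement to prove is $\operatorname{crit}(f)\subset\operatorname{crit}(g)$. Ultimately you need that $g$ is constant on the fibres of $f$.

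What is missing is a lemma producing a nonlinear common right factor: $f=f_1\circ h$ and $G=G_1\circ h$ with $h$ nonlinear. Given this, condition (ii) forces $f_1$ to be linear. Then $G=G_1\circ f_1^{-1}\circ f$, which contradicts the maximality of $m$ unless $G$ is linear.

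Producing such an $h$ from (i) and (iii)--(v) is the real content of Ng's theorem. The natural data are the two descriptions of the critical set of $F=f\circ G=G\circ f$:
\[
\operatorname{crit}(F)=\operatorname{crit}(G)\cup G^{-1}(\operatorname{crit} f)=\operatorname{crit}(f)\cup f^{-1}(\operatorname{crit} G),
\]
combined with the finiteness over critical values in (iv) and the bounded multiplicities in (v). Your proposal leaves this step entirely open.
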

\noindent\large{The proof of the last part is easy. This is because $J(g)=J(f^n)$ by Theorem \ref{yang} and $J(f^n)=J(f).$}\\
\noindent\large{
 M. Ozawa \large({\cite{ozawa} Theorem 1), showed that if $f$ is of finite order and if for each  $b\in\C,$ the simultaneous equations $f(z)=b, f'(z)=0$ have only a finite number of solutions, then $f$ is left prime in the entire sense. This means that for functions of finite order, the hypothesis (ii) is redundant in Theorem \ref{tw1}.
 We now show that for any non-constant polynomial $P,$ $e^z+P(z)$ and $\sin z+P(z)$ satisfies conditions  (i)-(v). For this we require an application of Borel's Lemma (\cite[Theorem 1.7]{chuang}) to conclude that  the derivatives of $e^z+P(z)$ and $\sin z+P(z)$ have infinitely many zeros. This gives condition (iii). Conditions (ii), (iv) and (v) can be easily checked. It only remains to check condition (i). Suppose on the contrary, $e^z+P(z)$ or $\sin z+P(z)$  is of the form $H\circ Q$ where $H$ is periodic entire function and $Q$ is a polynomial. If $Q$ is linear, then $H\circ Q$ will be periodic. But $e^z+P(z)$ and $\sin z+P(z)$  cannot be  periodic as $P$ is non-constant. Hence, $Q$ has degree greater than $1.$ This implies that order of $H\circ Q$} will also be greater than $1.$ This contradicts the fact that order of $e^z+P(z)$ and $\sin z+P(z)$ is $1.$ Thus, $e^z+P(z)$ and $\sin z+P(z)$ satisfy conditions (i)-(v).
\\
\noindent\large{Observe that, conditions (i)-(v) are satisfied by a large class of entire functions as mentioned below.} 
\begin{theorem}\label{tw2'}
\large{(\cite{tw}, Theorem F)
Let $f$ be an entire function. There exists a countable set $E_f \subset \mathbb{C}$ such that, for any $a \not \in E_f, f_a(z)=f(z)+az$ satisfies the following conditions:
\begin{enumerate}[label=(\roman{*})]
\item{$f_a$ is not periodic}
\item{$f_a$ is prime in the entire sense}
\item{For any $c\in \mathbb{C}$, the simultaneous equations $f_a(z)=c, \ f_a'(z)=0$ have at most one solution}
\item{The orders of zeros of ${f_a}'(z)$ are equal to 1.}
\end{enumerate}
}
\end{theorem}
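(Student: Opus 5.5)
The plan is to treat the four conditions separately. For each one we identify a countable \emph{bad set} of parameters $a$, and we take $E_f$ to be the union of these sets. The degenerate case $f''\equiv 0$, where $f$ is linear, is disposed of directly, since then $f_a$ is linear. So assume $f''\not\equiv 0$.

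Conditions (iv) and (i) are elementary.

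For (iv), we have $f_a'=f'+a$. A multiple zero of $f_a'$ at $z_0$ forces $f''(z_0)=0$ and $a=-f'(z_0)$. The zeros of $f''$ are discrete, hence countable, so the bad set is $\{-f'(z_0): f''(z_0)=0\}$, which is countable.

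For (i), suppose $f_a(z+p)=f_a(z)$ with $p\ne 0$. Then $f(z+p)-f(z)\equiv -ap$ is constant. The set $\Pi$ of all $p$ for which $f(z+p)-f(z)$ is constant is an additive group, and it consists of periods of $f'$. If $\Pi$ were non-discrete, $f'$ would be constant, which is excluded. So $\Pi$ is discrete, hence countable, and each $p\in\Pi$ determines at most one bad value $a=-(f(z+p)-f(z))/p$.

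For (iii), we want to avoid two distinct critical points $z_1\ne z_2$ of $f_a$ with the same critical value. Eliminating $a=-f'(z_1)$, such a pair lies on the analytic set
\[V=\{(z_1,z_2)\in\C^2:\ f'(z_1)=f'(z_2),\ z_1\neq z_2\},\]
where it is a zero of
\[\Phi(z_1,z_2)=f(z_1)-f(z_2)-f'(z_1)(z_1-z_2).\]
The key observation is the following. Away from the countable set of critical values of $f'$, a critical point of $f_a$ moves holomorphically as $z(a)$. Its critical value $c(a)=f(z(a))+az(a)$ then satisfies $c'(a)=z(a)$.

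Hence if two branches $z_1(a)\ne z_2(a)$ had $c_1\equiv c_2$ on an open set, differentiating would give $z_1\equiv z_2$, a contradiction. So $\Phi$ does not vanish identically on any irreducible component of $V$. Its zero set on $V$ is therefore discrete in each of the countably many local branches, and projecting to $a=-f'(z_1)$ gives a countable bad set.

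The main obstacle is (ii), primality. We use (iii) and (iv), already secured outside a countable set, to force the structure of any factorisation. Suppose $f_a=g\circ h$ with $g,h$ both nonlinear.

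First consider a zero $w$ of $g'$. If $h$ takes the value $w$ at two distinct points, these are two distinct critical points of $f_a$ with the common critical value $g(w)$, contradicting (iii). Hence every zero of $g'$ is a value taken by $h$ at most once.

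If $h$ is transcendental, Picard's theorem then forces $g'$ to have at most one zero $w$, and $w$ must be an omitted value of $h$. If moreover $g$ is a polynomial, then $g(z)=\alpha(z-w)^{m}+\beta$ with $m\ge 2$, so $f_a-\beta=\alpha(h-w)^m$ is zero-free. I would aim to show that $f+az-\beta$ omitting a value can happen only for countably many $a$, by applying the same ``derivative in $a$'' trick to a one-parameter family of omitted values. This step is still unverified.

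If $g$ is transcendental with $g'$ zero-free or with a single zero, I would try Nevanlinna's second main theorem on $f_a=g\circ h$. The goal is to show that $f_a$ then has too few simple critical points compared with $f_a'=f'+a$, where the latter has generically many simple zeros. This is the step I expect to need Ng's full machinery rather than a short argument.

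Finally, if $h$ is a polynomial of degree at least $2$, then $h'$ has a zero $z_0$, and (iv) forces it to be simple with $g'(h(z_0))\ne 0$. The other preimages of $h(z_0)$ under $h$ are then non-critical, which is consistent. So the contradiction must instead come from $g$ transcendental. Here $g'$ has infinitely many zeros, each attained by $h$ at two or more points, which contradicts (iii), unless $g'$ has finitely many zeros; that remaining subcase I would again attack by a growth comparison.
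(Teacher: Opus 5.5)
The survey only quotes this result (as Theorem F of Ng's paper) and gives no proof, so your argument has to stand on its own.

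Your treatment of (i), (iii) and (iv) is correct. The identity $c'(a)=z(a)$ is a neat way to see that $\Phi$ cannot vanish on a whole branch of $V$. (For linear $f(z)=\lambda z+\mu$ you must still put $a=-\lambda$ into $E_f$, since $f_{-\lambda}$ is constant.)

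The genuine gap is (ii), which is the real content of the theorem and which you do not prove. Your plan of getting a contradiction from (iii) and (iv) cannot succeed on its own. The functions $e^{z^2}=\exp\circ z^2$ and $\exp(\lambda z+e^{z})$ with $\lambda\notin\mathbb{Q}$ are non-periodic, have only simple critical points with pairwise distinct critical values, and are composite. So any proof must use that $f_a'=f'+a$ runs through a one-parameter family, and none of your sketches does.

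There are also two local errors:
\begin{itemize}
\item $w$ need not be omitted by $h$ (e.g.\ $h=ze^{z}$ takes $0$ exactly once). So you only get that $f_a-\beta$ has at most one zero.
\item For polynomial $h$ you may not assume $g'$ has infinitely many zeros. In fact (iii) and (iv) force $g'$ to be zero-free there: a zero $w$ of $g'$ would need $h-w=c(z-z_1)^d$, making $z_1$ a zero of $f_a'$ of order at least $2d-1$.
\end{itemize}

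For transcendental $f$ the gap can be closed at the cost of at most three more exceptional values.

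If $h$ is a polynomial, then $f'+a=g'(h)h'$ has only the $d-1$ zeros of $h'$. By Picard's theorem for $f'$, this happens for at most one $a$.

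If $g$ is a polynomial, your computation gives that $f+az-\beta$ has at most one zero. Two such pairs with $a_1\ne a_2$ give $P_1e^{\phi_1}-P_2e^{\phi_2}=L$ with $L\not\equiv 0$ linear. Apply the second main theorem to $P_1e^{\phi_1}/L$, which, like $P_1e^{\phi_1}/L-1$, has finitely many zeros and poles. This forces $f$ to be a polynomial.

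The case you leave open, $g$ and $h$ both transcendental, is where the missing idea is needed. There $g'$ has at most one zero, so
\[
N\bigl(r,1/(f'+a)\bigr)\le N(r,1/h')+O(\log r)\le T(r,h)+S(r,h)+O(\log r).
\]
Since $g'\circ h=(f'+a)/h'$, we have $T(r,g'\circ h)\le T(r,f')+T(r,h')+O(1)$. Because $g'$ is transcendental, Clunie's theorem gives $T(r,h)=o\bigl(T(r,g'\circ h)\bigr)$. Hence $T(r,h)=o\bigl(T(r,f')\bigr)$ outside a set of finite measure, and so $N\bigl(r,1/(f'+a)\bigr)=o\bigl(T(r,f')\bigr)$ there. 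The second main theorem for $f'$ with the values $\infty,-a_1,-a_2$ then shows this can happen for at most one $a$.

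For nonlinear polynomial $f$, a shorter route suffices. Apply your observation about zeros of $g'$ to a critical point of the polynomial $g$; together with (iv), this already gives primality.
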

\noindent\large{We only require to check condition (i) of Theorem \ref{tw1} that $f$ is not of the form $H \circ Q$ where $H$ is periodic entire and $Q$ is a polynomial. This will be done using conditions (i) and (ii) mentioned in above hypothesis. As discussed in above paragraph, $Q$ has degree greater than $1.$ But this means that $f_a$ is not prime in the entire sense which violates condition (ii). Hence, the assertion.}


\noindent\large{Theorem \ref{tw1} applied to $f_a$ given in Theorem \ref{tw2'} gives rise to the following result.}

\begin{theorem}
\large({\cite{tw}, Theorem 2)
Let $f$ be an entire function and define a family of functions $f_a(z)=f(z)+az.$ Then there exists a countable set $E_f \subset \mathbb{C}$ such that for each $a \not \in E_f,$ any non-linear entire function $g$ that permutes with $f_a$ is of the form $g(z)=df^{n}_a(z)+e,$ where $d^k=1$ for some $k\in\N$, and $e \in \mathbb{C}$. Hence, $J(f_a)=J(g)$. 
}
\end{theorem}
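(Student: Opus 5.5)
The plan is to combine Theorem \ref{tw2'} with Theorem \ref{tw1}, and then use Theorem \ref{yang} for the Julia set equality. First fix $f$ entire and take $E_f$ to be the countable exceptional set supplied by Theorem \ref{tw2'}. Let $a\notin E_f$. Before doing anything else I will make sure $f_a$ is transcendental; if it is not, then $g$ cannot be non-linear and permutable, by the Baker--Iyer result quoted in the introduction. I will then check the five hypotheses (i)--(v) of Theorem \ref{tw1} for $f_a$, one at a time.

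The translations are as follows.
\begin{enumerate}[label=(\roman{*})]
\item Condition (ii) of Theorem \ref{tw1} follows from (ii) of Theorem \ref{tw2'}, since a prime function is in particular left-prime.
\item Condition (iv) of Theorem \ref{tw1} follows from (iii) of Theorem \ref{tw2'}, since each critical value then has at most one critical preimage.
\item Condition (v) of Theorem \ref{tw1} follows from (iv) of Theorem \ref{tw2'}, with $M=1$.
\item Condition (i) of Theorem \ref{tw1} is handled as in the paragraph after Theorem \ref{tw2'}. Suppose $f_a=H\circ Q$ with $H$ periodic and $Q$ a polynomial. If $\deg Q=1$, then $f_a$ is periodic, contradicting (i) of Theorem \ref{tw2'}. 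If $\deg Q\ge 2$, then $f_a=H\circ Q$ with neither factor linear ($H$ is non-linear because $f_a$ is transcendental), contradicting primality.
\end{enumerate}

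The main obstacle is condition (iii) of Theorem \ref{tw1}: $f_a'$ must have at least two distinct zeros. Theorem \ref{tw2'} does not provide this directly. I would handle it by enlarging $E_f$ by a further countable set. Note that $f_a'(z)=f'(z)+a$, and $f_a'$ has at most one zero exactly when $f'$ takes the value $-a$ at most once. For a transcendental $f'$, Picard's great theorem shows that $f'$ assumes every value infinitely often except for at most one value. So at most one $a$ fails, and we may add $-c$ to $E_f$, where $c$ is the Picard exceptional value of $f'$ if one exists. If $f$ is a polynomial, then $f_a$ is a polynomial, the transcendence issue above arises, and this case must be treated separately or excluded. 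In either case $E_f$ stays countable.

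Once all hypotheses hold, Theorem \ref{tw1} gives $g=d f_a^n+e$ with $d^k=1$ and $e\in\C$. For the Julia sets, note that $g$ commutes with $f_a^n$ because it commutes with $f_a$. Applying Theorem \ref{yang} to the pair $(f_a^n,g)$ gives $J(g)=J(f_a^n)$. Finally, $J(f_a^n)=J(f_a)$, which completes the argument.
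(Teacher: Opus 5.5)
Your proposal is correct and follows the paper's route. You apply Theorem \ref{tw1} to $f_a$ via Theorem \ref{tw2'}, obtain hypothesis (i) from non-periodicity plus primality, and get $J(f_a)=J(g)$ from Theorem \ref{yang} applied to the commuting pair $f_a^n,g$; your Picard argument, which removes the single extra value $a=-c$, also supplies hypothesis (iii) of Theorem \ref{tw1}, a point the paper skips when it says only condition (i) needs checking. One correction: your aside that a polynomial $f_a$ admits no non-linear commuting $g$ is false, since Baker--Iyer only excludes transcendental $g$ (e.g.\ $g=f_a^2$ when $\deg f\ge 2$), but this is irrelevant here because $f$, and hence $f_a$, is transcendental by the standing convention of Section 3.
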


\noindent\large{The method developed in \cite{tw} is helpful in solving functional equations. It is also useful in proving the next result.}
\begin{theorem}
\large({\cite{tw}, Theorem 3)
Let q be an entire function (non-constant) and $P$  a polynomial with at least two distinct zeros. Suppose $f$ is of the form $f(z)=P(z)e^{q(z)}$  is prime in the entire sense. Then any non-linear entire function $g$ which commutes with $f$ is of the form $g(z)=cf^n(z)+d$, where $c^k=1$ for some $k\in\N,$ and $d\in \mathbb{C}$. Hence, $J(f)=J(g)$.
} 
\end{theorem}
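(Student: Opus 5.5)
The natural route is to reduce the statement to Theorem \ref{tw1}: check that $f(z)=P(z)e^{q(z)}$ satisfies hypotheses (i)--(v) of that theorem. The conclusion $g=cf^n+d$ then follows, and $J(f)=J(g)$ comes from Theorem \ref{yang} together with $J(f^n)=J(f)$. Where some hypothesis cannot be verified in full generality (for instance (iv) or (v) when $q$ is transcendental), I would rerun Ng's argument from \cite{tw} directly for this $f$.

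Hypothesis (ii) is immediate, since $f$ is assumed prime in the entire sense, and prime implies left prime. For (i), suppose $f=H\circ Q$ with $H$ periodic of period $\omega$ and $Q$ a polynomial. Then $f$ takes equal values on the level sets $Q(z)=Q(z_0)+k\omega$. If $\deg Q=1$, $f$ is periodic, and I will rule that out via zeros: the zero set of $f$ is exactly the zero set of $P$, which is finite and nonempty, whereas a periodic function with one zero has infinitely many. If $\deg Q\ge2$, primeness forces $H$ to be linear, which contradicts periodicity (a nonconstant periodic function is never linear). So (i) holds.

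For (iii), compute
\[f'(z)=\bigl(P'(z)+P(z)q'(z)\bigr)e^{q(z)},\]
so the critical points are the zeros of $P'+Pq'$. Since $P$ has at least two distinct zeros $z_1,z_2$, I would use Rolle-type reasoning on the logarithmic derivative $f'/f=P'/P+q'$. This function has simple poles at each zero of $P$, all with positive residues. If $q$ is a polynomial, $f'/f$ is rational with at least two poles, and it should be forced to have at least two zeros unless $q'$ is degenerate; the argument principle applied on large circles would settle this. If $q$ is transcendental, Nevanlinna theory (a Borel/Hayman-type result) should give infinitely many zeros of $P'+Pq'$. For (iv): a point with $f(z)=w$ and $f'(z)=0$ satisfies $Pe^q=w$ and $P'+Pq'=0$. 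When $w=0$ the point is a multiple zero of $P$, so there are finitely many. When $w\neq0$, I would argue that infinitely many such points would make $f-w$ have infinitely many multiple zeros on a set that contradicts primeness, following Ozawa's criterion used in reverse.

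The main obstacle is (iv)--(v) for general transcendental $q$, since the multiplicities of zeros of $P'+Pq'$ need not be bounded a priori. My fallback is therefore not to invoke Theorem \ref{tw1} as a black box. Instead I would imitate its proof: from $f\circ g=g\circ f$, use that $f$ has only the finitely many zeros of $P$, so $f(g(z))=0$ means $g(z)$ lies in the finite zero set of $P$. Combined with $g(f(z))$, this forces $g$ to permute the zeros of $f$ up to the structure $g=cf^n+d$, via the factorisation argument of \cite{tw} applied to $P(g)e^{q(g)}=g(Pe^q)$. The final step, $J(f)=J(g)$, is then routine as above.
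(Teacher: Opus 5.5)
Your reduction to Theorem \ref{tw1} fails at hypothesis (iii), which does not follow from the hypotheses of the statement. Take $P(z)=z^2-1$ and $q(z)=iz$. Then $P$ has two distinct zeros, and $f(z)=(z^2-1)e^{iz}$ is prime. This follows from the criterion quoted right after the statement, since $q$ is linear. It can also be checked directly. If $f=F\circ G$ with $G$ a polynomial of degree $d\ge 2$, then $F$ has order $1/d<1$ and finitely many zeros, so $F$ is a polynomial, which is impossible. If $G$ is transcendental, Picard and P\'olya force $F$ to be a polynomial with a single zero, and the simple zeros of $f$ then make $F$ linear. But
\[
f'(z)=\bigl(iz^2+2z-i\bigr)e^{iz}=i(z-i)^2e^{iz},
\]
so $f'$ has only one distinct zero. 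This is exactly where your heuristic breaks: the argument principle counts zeros of $f'/f=P'/P+q'$ with multiplicity and cannot separate them. For transcendental $q$ your appeal to Nevanlinna theory is also unfounded. With $P=z^2-1$, pick an entire $h$ with $e^{h(1)}=2$ and $e^{h(-1)}=-2$, and set $q'=(e^h-2z)/(z^2-1)$. Then $P'+Pq'=e^h$ has no zeros at all. Likewise, (iv) cannot be obtained by using Ozawa's criterion ``in reverse'': that criterion is only a sufficient condition for left-primeness, and primeness does not imply it.

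Everything therefore rests on your fallback, and that is not an argument. Saying that commutation ``forces $g$ to permute the zeros of $f$ up to the structure $g=cf^n+d$ via the factorisation argument of \cite{tw}'' simply restates the conclusion. Even the intermediate claim is unjustified. For $a\in P^{-1}(0)$ you get $f(g(a))=g(f(a))=g(0)$, so $g$ maps the zeros of $f$ into $f^{-1}(g(0))$, which equals $P^{-1}(0)$ only if $g(0)=0$. What commutation actually gives is $g(f^{-1}(w))\subset f^{-1}(g(w))$ and $g^{-1}(P^{-1}(0))=f^{-1}(g^{-1}(0))$. The paper does not derive this statement from Theorem \ref{tw1} either; it quotes it as Ng's Theorem 3, proved in \cite{tw} by applying his method directly to $Pe^q$. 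To close the gap you would have to carry out that factorisation argument for this $f$. That means showing, from primeness and the finite zero set $P^{-1}(0)$ with at least two points, that $g=L\circ f^n$ with $L$ linear, and this is precisely the step your proposal leaves blank. Your checks of (i) and (ii) are correct. Your final step, $J(g)=J(f^n)=J(f)$ via Theorem \ref{yang}, is the one the paper uses.
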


\noindent \large{It is known that if $q$ is a polynomial and $P$ and $q$ do not have a non-linear common right factor, then $f(z)=P(z)e^{q(z)}$ is prime in the entire sense \cite{tw}.

\noindent \large{The escaping set of an entire function $f$ denoted by $I(f)$ was introduced by Eremenko \cite{Erem}  and  is defined as $I(f)= \{ z \in \mathbb{C} : \ |f^n(z)| \rightarrow \infty \ \text{as} \ n \rightarrow \infty \}$. The three important subsets of $I(f)$ namely,  the fast escaping set $A(f)$ introduced by Bergweiler and Hinkkanen \cite{Bergweiler2} in 1999, $I_0(f)$ and $T(f)$ are defined as follows:\\
\begin{align}
&A(f)=\{z \in \mathbb{C}:\exists \ L \in N \ \text{such that} \hspace{4 pt} |f^{n+L}(z)| > M^n(R,f) \ \text{for all} \ n\geq 0 \} \label{A)}\\
&I_0(f)= \{ z \in \mathbb{C}:  \ \frac{\operatorname{\log}|f^{n+1}(z)|}{\operatorname{log}|f^n(z)|} \rightarrow \infty \ \text{as} \ n \rightarrow \infty \} \label{I_0}\\
&T(f)=\{z \in \mathbb{C}: \frac{\operatorname{\log}\operatorname{\log}|f^{n}(z)|}{n} \rightarrow \infty \ \text{as} \ n \rightarrow \infty \} \label{T}
\end{align}   

\noindent \large{
 We now show that the sets $A(f), I_0(f)$ and $T(f)$ are all subsets of $I(f).$ 
Suppose $z\in A(f).$ In order to show the inclusion $A(f)\subset I(f),$ observe that the iterated maximum modulus $M^n(R,f) $ tends to infinity. This is because as $f$ is transcendental entire, $M(r,f)>r$ for large $r.$ Hence, $|f^n(z)|\to\infty.$\\
Using Wiman and Valiron theory, Eremenko \cite{Erem} proved that for an entire function $h,$ $I(h)\neq \emptyset.$ The proof was based on the behaviour of entire functions near points of maximum modulus. He showed that there exists $z \in I(h)$ such that $|h^{n+1}(z)| \sim M(|h^n(z)|,h)$ as $n \rightarrow \infty$. Also, for entire function $h$,  $\frac{\log M(r,h)}{\log \hspace{2 pt} r} \rightarrow \infty$ as $r \rightarrow \infty$. On taking $r=|h^n(z)|,$ we deduce that $\frac{\log \hspace{2 pt} |h^{n+1}(z)|}{\log \hspace{2 pt} |h^{n}(z)|} \rightarrow \infty$ as $n \rightarrow \infty.$ This shows that $I_0(h) \neq \emptyset$ and $I_{0}(h) \subset I(h).$ 

\noindent Also, for a point $z \in I_0(h),$ it immediately follows from the fact that  $\frac{\log \hspace{2 pt} |h^{n+1}(z)|}{\log \hspace{2 pt} |h^{n}(z)|} \rightarrow \infty$ implies that
 $\frac{\log \hspace{2 pt} \log \hspace{2 pt} |h^{n+1}(z)|}{n} \rightarrow \infty$ as $n \rightarrow \infty.$ This shows that $T(h) \neq \emptyset$ and $I_0(h) \subset T(h).$\\
We now show that $I_0(h) \subset T(h)$. To this end,
let $z_0 \in I_0(h)$. Then
\[
  \frac{\log |h^{n+1}(z)|}{\log |h^n(z)|} \to \infty.
\]

Put $b_n = \log |h^n(z)|$.
Then $\dfrac{b_{n+1}}{b_n} \to \infty$. We want to show
$\dfrac{\log b_{n+1}}{n} \to \infty$.

As $\dfrac{b_{n+1}}{b_n} \to \infty$, for each $A > 0$, there exists $M$ such that
$\forall\, n \geq M$,
\[
  b_{n+1} \geq L\, b_n.
\]

On iterating $k$ times from $M$ onwards we get
\[
  b_{M+k} \geq L^k\, b_M \quad \forall\, k \geq 0.
\]

Taking $\log$, this gives $\log b_{M+k} \geq k \log L + \log b_M$.

Let $n = M + k$, i.e., $k = n - M$. Then
\[
  \log b_n \geq (n - M)\log L + \log b_M.
\]
\[
  \Rightarrow \quad \frac{\log b_n}{n} \geq \frac{(n - M)}{n}\log L + \frac{\log b_M}{n}
  = \left(1 - \frac{M}{n}\right)\log L + \frac{\log b_M}{n}.
\]

The right hand side tends to $\log L$ as $n \to \infty$.

\[
  \Rightarrow \quad \frac{\log b_n}{n} \geq \log L \quad \text{as } n \to \infty.
\]

As $L$ was arbitrary, we get
\[
  \liminf_{n \to \infty} \frac{\log b_n}{n} \geq \log L \quad \text{for each } L > 0.
\]

On letting $L \to \infty$,
\begin{equation}
  \frac{\log b_n}{n} \to \infty. \tag{1}
\end{equation}

Observe that
\[
  \frac{\log b_{n+1}}{n} = \frac{(n+1)}{n} \cdot \frac{\log b_{n+1}}{n+1} \to \infty
  \quad \text{(using (1))}.
\]

\[
  \Rightarrow \quad \frac{\log b_{n+1}}{n} \to \infty.
\]

Substituting $b_n = \log |h^n(z)|$, we obtain
\[
  \frac{\log \log |h^{n+1}(z)|}{n} \to \infty,
\]
and hence the result.

Using fast escaping set, Bergweiler and Hinkkanen  proved the following theorem regarding equality of Julia sets for permutable functions.}
\begin{theorem}
(\large{\cite{Bergweiler2}, Corollary of Theorem 2)
If $f$ and $g$ are permutable entire functions such that $A(f) \subset J(f)$ and $A(g) \subset J(g),$ then $J(f)=J(g)$. In particular, the conclusion  holds if $f$ and $g$ have no wandering domains. 
}
\end{theorem}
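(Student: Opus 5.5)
The plan is to show that $A(f)\subset J(f)$ forces $J(f)=\overline{A(f)}$, that $A(f)$ is completely invariant under any $g$ permuting with $f$, and then to run the same argument with the roles of $f$ and $g$ exchanged.

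\textbf{Step 1: $J(f)=\overline{A(f)}$.} Here I would use two standard facts from Bergweiler--Hinkkanen. First, $A(f)\neq\emptyset$, and more precisely $\partial A(f)=J(f)$. Second, $J(f)$ is the closure of the backward orbit of any non-exceptional point. Since $A(f)$ is completely invariant under $f$ and contains more than one point, $\overline{A(f)}\supset J(f)$. Combined with the hypothesis $A(f)\subset J(f)$ and the closedness of $J(f)$, this gives $J(f)=\overline{A(f)}$. The same holds for $g$.

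\textbf{Step 2: $g(A(f))\subset A(f)$.} This is the heart of the argument. Take $z\in A(f)$ with $|f^{n+L}(z)|>M^n(R,f)$, and set $w=g(z)$. Permutability gives $f^{n}(w)=g(f^{n}(z))$. So I need a lower bound for $|g(\zeta)|$ at points $\zeta=f^{n+L}(z)$ of huge modulus, and a modulus bound alone is not available. The way around this is to use the fact that $A(f)$ can be characterised as $\bigcup_n f^{-n}(A_R(f))$, with $A_R(f)$ the "level" set $\{z:|f^n(z)|\ge M^n(R,f)\ \forall n\}$, whose components are unbounded closed sets. On such a component, $g$ either maps into $A(f)$ or the image stays in a region where the orbit under $f$ is slower. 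To rule out the second case I would use Baker's growth comparison stated above: there are $p$ and $R$ with $M(r,g)<M(r,f^p)$ for $r>R$. Applied together with $f^n\circ g=g\circ f^n$, this shows that the $f$-orbit of $g(z)$ cannot eventually stay below the iterated maximum modulus while the $f$-orbit of $z$ races above it. Concretely, if $g(z)\notin A(f)$, then the relation $f^n(g(z))=g(f^n(z))$ forces $g$ to be small on a sequence of points in the fast-escaping "spider's web"/annuli structure, contradicting the minimum-modulus-type behaviour given by Bergweiler--Hinkkanen on $A(f)$. I expect this step to be the main obstacle. What I would try first is the argument of \cite{Bergweiler2}, Theorem 2 itself, namely that $g(A(f))\subset A(f)$ holds for commuting $f,g$. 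I would attempt it by comparing $|f^{n+L}(g(z))|$ with $M^{n}(R',f)$ for a smaller $R'$, after adjusting $L$ by $p$.

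\textbf{Step 3: Conclusion.} From Steps 1 and 2 we get $g(J(f))=g(\overline{A(f)})\subset\overline{A(f)}=J(f)$. Hence $J(f)$ is a closed, forward $g$-invariant set with more than two points. By Montel's theorem, the family $\{g^n\}$ is normal on $\C\setminus J(f)$, but only after we also have backward invariance, so I would upgrade to complete invariance. The same argument as in Step 2, applied to preimages, gives $g^{-1}(A(f))\subset A(f)$. Then $g^n$ omits $J(f)$ on each component of $F(f)$, so $\{g^n\}$ is normal there, and $F(f)\subset F(g)$. By symmetry $F(g)\subset F(f)$, hence $J(f)=J(g)$.

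For the final assertion: if $f$ has no wandering domains, then every Fatou component is preperiodic. By the classification in Theorem \ref{sec3, thms}, the only components on which orbits escape are Baker domains and their preimages, and in these orbits escape at most like $O(|z|)$ iterates, not fast. Hence $A(f)\cap F(f)=\emptyset$, i.e.\ $A(f)\subset J(f)$, and likewise for $g$, so the first part of the result applies.
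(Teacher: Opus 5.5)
There is a genuine gap, and it is where you suspected: Step~2. You give no proof of $g(A(f))\subset A(f)$, and the tool you propose cannot supply one. Baker's comparison $M(r,g)<M(r,f^p)$ is an \emph{upper} bound for $|g|$. Inserted into $f^{n}(g(z))=g(f^{n}(z))$, it bounds $|f^{n}(g(z))|$ from above by $M^{p}(|f^{n}(z)|,f)$. So it can only show that fast escape of $g(z)$ forces fast escape of $z$, never the converse. The converse would need a lower bound for $|g|$ at the points $f^{n+L}(z)$, and nothing in your sketch gives one: $A_R(f)$ need not be a spider's web, and there is no minimum-modulus control of $g$ along an $f$-orbit. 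Step~3 then gets the backward inclusion ``by the same argument as in Step~2'', so as written neither inclusion is proved. You also never explain how an inclusion for $A(f)$ under $g^{-1}$ passes to one for $J(f)=\overline{A(f)}$.

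The repair is that forward invariance is not needed at all. Montel only uses $g(F(f))\subset F(f)$, i.e.\ $g^{-1}(J(f))\subset J(f)$. This follows from the \emph{easy} inclusion $g^{-1}(A(f))\subset A(f)$, which is exactly what Baker's lemma gives. Suppose $|f^{n+L}(g(w))|>M^n(R,f)$ for all $n\ge 0$. For large $n$, $|f^{n+L}(w)|$ exceeds the radius in Baker's lemma, since $M^n(R,f)\to\infty$. Hence
\[
M^n(R,f)<|g(f^{n+L}(w))|\le M(|f^{n+L}(w)|,f^p)\le M^p(|f^{n+L}(w)|,f).
\]
Therefore $|f^{n+L}(w)|>M^{n-p}(R,f)$, and $w\in A(f)$ after enlarging $L$.

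Since $g$ is an open map, $g^{-1}(\overline{A(f)})=\overline{g^{-1}(A(f))}\subset\overline{A(f)}=J(f)$ by your Step~1, which is fine. Hence $g^n(F(f))\subset F(f)$, the family $\{g^n\}$ omits $J(f)$ on $F(f)$, and $F(f)\subset F(g)$; symmetry gives equality. With Step~2 deleted and this substituted, your proof is complete. The paper itself only cites the result. Your repaired argument matches the Benini--Rippon--Stallard result quoted later in the paper (their Proposition 3.2), which follows at once from $g^{-1}(A(f))\subset A(f)$ and $J(f)=\partial A(f)$.

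In your last paragraph, replace ``$O(|z|)$ iterates'' by the standard estimate $\log|f^n(z)|=O(n)$ in an invariant Baker domain. Alternatively, simply use the fact, recorded in the paper, that a Fatou component meeting $A(f)$ is wandering.
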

\noindent\large{ Note that if a transcendental entire function $h$ does not have wandering domains, then $A(h)\subset J(h).$The conclusion $J(f)=J(g)$ for two permutable maps $f$ and $g$  having no wandering domains was obtained by Langley \cite{Langley} under additional growth restrictions on $f$ and $g.$}
  
\noindent \large{In 2002, Wang and Yang established some results for Fatou components of permutable functions and showed some progress on escaping sets of entire functions. They took a non-constant polynomial $Q$ and permutable entire functions $f$ and $g$ satisfying the relation $Q(g) = aQ(f) + b, \ a (\neq 0), \ b \in \mathbb{C}.$ The results are discussed in the following theorems.}
\begin{theorem}
\large{(\cite{Wang_Yang}, Theorem 1)
Let $f$ and $g$ be two  permutable  entire functions and
$Q$ be a non-constant polynomial. Suppose that $Q(g) = aQ(f) + b, \ a (\neq 0), \ b \in \mathbb{C}$. Then following are true:
\begin{enumerate}
\item[(i)] {$I_0(f) = I_0(g)$}
\item[(ii)] {$T (f) = T(g)$}
\item[(iii)] {$J (f ) = J (g)$ where $I_0(f), T(f)$ are defined as in \eqref{I_0}, \eqref{T} respectively.}
\end{enumerate} 
}
\end{theorem}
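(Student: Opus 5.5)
The plan is to turn the polynomial relation into a comparison of the orbits of $f$ and $g$, in both directions. Write $d=\deg Q\ge 1$. Then $|Q(w)|\sim |c|\,|w|^d$ as $w\to\infty$, and so
\[
\log|Q(w)| = d\log|w| + O(1), \qquad |w|\to\infty.
\]
Combined with $Q(g)=aQ(f)+b$ this gives two-sided estimates: $|g(z)|\le C|f(z)|+C$, and conversely $|f(z)|\le C|g(z)|+C$. More precisely, $\log|g(z)|=\log|f(z)|+O(1)$ whenever one of $|f(z)|,|g(z)|$ is large.

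Next I iterate. Permutability gives $Q(g\circ h)=aQ(f\circ h)+b$ for any $h$. In particular, with $h=f^{n-1}$ I get $g(f^{n-1}(z))$ comparable to $f^{n}(z)$. I also need a mixed-word argument. Since $f$ and $g$ commute, $g^n=g\circ g^{n-1}$. The relation applied at the point $g^{n-1}(z)$ gives $|g^n(z)|$ comparable to $|f(g^{n-1}(z))|$. But $f(g^{n-1}(z))=g^{n-1}(f(z))$ is just a $g$-orbit started at $f(z)$, so this alone does not close the argument.

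A cleaner route is to use the \emph{semiconjugacy-like} structure. Since $Q\circ g = aQ\circ f + b$, one is tempted to write $Q\circ g^n = a^n Q\circ f^n + b_n$. Checking this: $Q(g^n)=Q(g\circ g^{n-1})=aQ(f\circ g^{n-1})+b$, and $f\circ g^{n-1}=g^{n-1}\circ f$. The same step can be repeated on $Q(f\circ g^{n-1})$, whose inside is $g^{n-1}\circ f$. Hence $Q(g^n)=aQ(g^{n-1}(f))+b$. Inducting (the inside always stays a $g$-power composed with an $f$-power) yields
\[
Q(g^n)=a^nQ(f^n)+b\,(1+a+\dots+a^{n-1}).
\]

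The main obstacle is that the constants $a^n$ and $b_n=b(1+\dots+a^{n-1})$ may grow geometrically. I handle this with the log-scale estimates the sets demand. If $z\in I_0(f)$, then $\log|f^n(z)|$ grows superexponentially, since $b_{n+1}/b_n\to\infty$ in the notation of the earlier argument. Then $|a^nQ(f^n)|$ dominates $|b_n|$ (which is at most $C^n$), and
\[
d\log|g^n(z)| = d\log|f^n(z)| + O(n).
\]
An $O(n)$ error is negligible against the ratio condition defining $I_0$, and also against $\log\log|f^n(z)|/n\to\infty$ defining $T$. So $I_0(f)\subset I_0(g)$ and $T(f)\subset T(g)$. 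The reverse inclusions follow by symmetry, because $Q(f)=a^{-1}Q(g)-a^{-1}b$ has the same form. I would double check that points of $T(f)$ also satisfy $\log|f^n|\gg n$, which holds since $\log\log|f^n|/n\to\infty$.

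For (iii), the identity $Q\circ g^n=a^nQ\circ f^n+b_n$ is used on Fatou components. Normality of $\{f^n\}$ near $z_0$ should transfer to normality of $\{g^n\}$. On a disk in $F(f)$, a subsequence $f^{n_k}$ converges, either locally uniformly or to $\infty$. Because $Q$ is proper, the factor $a^n$ is the troublesome part here. I would instead argue via the fact that $J$ is the closure of the escaping-type sets. Known results (Eremenko, and Bergweiler–Hinkkanen) give $J(f)=\partial I_0(f)$, or at least $J(f)=\partial T(f)$. Then (i) or (ii) immediately gives $J(f)=J(g)$. This boundary characterization is the step I expect to need the most care, and I would cite it rather than reprove it.
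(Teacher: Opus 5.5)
Your proposal is correct and follows the paper's route. The only ingredient the paper supplies for this theorem is exactly your identity $Q(g^n)=a^nQ(f^n)+b(a^{n-1}+\dots+a+1)$ (Lemma 1), proved by the same induction via $f\circ g^{n-1}=g^{n-1}\circ f$. Your $O(n)$ logarithmic error estimate, together with the symmetric relation $Q(f)=a^{-1}Q(g)-a^{-1}b$, correctly gives (i) and (ii).

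For (iii), Eremenko and Bergweiler--Hinkkanen prove the boundary identity only for $I(f)$ and $A(f)$, not for $I_0(f)$ and $T(f)$, but it carries over by the same argument:
\begin{itemize}
\item complete invariance, nonemptiness, and density of repelling periodic points in $J(f)$ give $J(f)\subset\partial I_0(f)$ and $J(f)\subset\partial T(f)$;
\item Harnack's inequality for the positive harmonic functions $\log|f^n|$ on an escaping Fatou component shows that every Fatou component lies entirely inside or entirely outside $I_0(f)$ (resp.\ $T(f)$), which gives the reverse inclusions.
\end{itemize}
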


\begin{theorem}
\large({\cite{Wang_Yang}, Theorem 2)\label{th1,}
Let $f$ and $g$ be two distinct permutable  entire functions and
$Q$ be a non-constant polynomial. Suppose that $Q(g) = aQ(f ) + b, \ a (\neq 0), \ b \in \mathbb{C}$. Then
\begin{enumerate}
\item[(i)] {if $g$ has at least one fixpoint, then $A(f) \subset A(g)$}
\item[(ii)] {$A(f^2) = A(g^2)$}
\item[(iii)] {$A(f ) \subset A(g)$ or $A(g) \subset A(f )$} \cite{Zheng_Yang}.
\end{enumerate}
}
\end{theorem}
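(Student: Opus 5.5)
Throughout, write $M(r,\cdot)$ for the maximum modulus and use the relation $Q(g)=aQ(f)+b$ in one way only: it makes $M(r,g)$ and $M(r,f)$ comparable. Write $d=\deg Q\ge 1$. Since $|Q(w)|\sim |c_d||w|^d$ as $|w|\to\infty$, the identity $Q(g(z))=aQ(f(z))+b$ gives $|g(z)|^d = |a|\,|f(z)|^d(1+o(1))$ whenever $|f(z)|$ or $|g(z)|$ is large. Hence there are constants $0<c<C$ with
\[
c\,|f(z)|\le |g(z)|\le C\,|f(z)|
\]
wherever either side is large. In particular $M(r,g)\le C\,M(r,f)$ and $M(r,f)\le c^{-1}M(r,g)$ for large $r$. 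The standard fact that transcendental entire functions satisfy $M(2r,h)\ge M(r,h)^{k}$ for any fixed $k$ and large $r$ (log-convexity together with $\log M(r,h)/\log r\to\infty$) lets us absorb these multiplicative constants into a shift of the radius.

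\textbf{Part (i).} Take $z\in A(f)$, so $|f^{n+L}(z)|>M^n(R,f)$ for all $n$, with $R$ large. We want to transfer this to $g$. Permutability gives $g^n\circ f^{L}=f^{L}\circ g^n$, but the cleaner route is comparison along orbits. By induction, $g^n$ and $f^n$ have comparable size: $|g^{n}(w)|\ge c|f(g^{n-1}(w))|$, and $f\circ g^{n-1}=g^{n-1}\circ f$. This yields the chain
\[
|g^{n}(f^{L}z)| = |g^{n-1}(g(f^L z))|\ge\cdots
\]
It is more natural to bound $|g^{n}(w)|$ below by $|f^{n}(w)|$ up to constants via $|g(h)|\ge c|f(h)|$ applied to $h=g^{k}f^{n-1-k}(w)$, interpolating between $f^n$ and $g^n$ one factor at a time. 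Each step multiplies by $c$, so the constants compound to $c^n$. That loss is harmless against iterated maximum modulus: it is dominated because $M^n(R,f)$ grows like a tower. Concretely, we compare with $M^n(R',g)$ using $M(r,f)\ge C^{-1}M(r,g)$, which gives $M^n(R,f)\ge M^n(R',g)$ for a suitable $R'$ with $R'\ge R_0(g)$, $R'$ small relative to $R$. The fixpoint hypothesis on $g$ should enter in choosing the base point. For $z\in A(f)$ one uses $g^{n}(f^{L}(z))=f^{L}(g^n(z))$, and the definition of $A(g)$ allows a shift $L$. We need $|g^{n+L'}(z)|$ large, and the fixpoint $\zeta$ of $g$ is what guarantees that $R'$ can be chosen with $M(r,g)>r$ for all $r\ge R'$ (so that the iterates $M^n(R',g)\to\infty$ monotonically) when comparing. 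I expect this bookkeeping to be the main obstacle, and I am least certain precisely how the fixpoint is needed.

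\textbf{Part (ii).} Apply the same comparison to $f^2$ and $g^2$. From $Q(g)=aQ(f)+b$ and permutability,
\[
Q(g^2)=aQ(f\circ g)+b=aQ(g\circ f)+b=a\bigl(aQ(f^2)+b\bigr)+b,
\]
so $f^2$ and $g^2$ satisfy a relation of the same type. In addition, $g^2$ has a fixpoint as soon as $g$ is transcendental, since a transcendental entire function $h$ with $h\circ h$ fixed-point-free does not exist (Fatou/Bergweiler). The same holds for $f^2$. Applying (i) in both directions gives $A(f^2)=A(g^2)$.

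\textbf{Part (iii).} If $g$ has a fixpoint, (i) gives $A(f)\subset A(g)$. Otherwise $g(z)=z+e^{h(z)}$ for some entire $h$. In that case $f$ must have a fixpoint, for otherwise both would be of this form, and we would check that the relation $Q(g)=aQ(f)+b$ still forces one of them to have one. Then (i) with the roles of $f$ and $g$ swapped gives $A(g)\subset A(f)$, since the relation inverts as $Q(f)=a^{-1}Q(g)-a^{-1}b$.
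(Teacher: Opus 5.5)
\textbf{Verdict: genuine gaps.} Your central mechanism is the right one: compare orbits with an exponential loss, and let the tower growth of $M^n(R,\cdot)$ absorb that loss. The survey gives no proof of this theorem beyond Theorem~\ref{lem1}. That result, $Q(g^n)=a^nQ(f^n)+b(a^{n-1}+\dots+1)$, gives your factor-by-factor interpolation in one stroke.

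Part (iii) rests on an unproved assertion. When $g$ is fixpoint-free you say $f$ "must" have a fixpoint and that "we would check" the relation forces this. The case where neither $f$ nor $g$ has a fixed point is simply not treated, and for general $Q$ the claim is not evident.

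Part (i) is not actually carried out; the bookkeeping you flag as the main obstacle is its whole content. Three problems:
\begin{enumerate}
\item Your account of the fixpoint is wrong: $M(r,g)>r$ for all large $r$ holds for every transcendental $g$, with or without fixed points.
\item The ``standard fact'' $M(2r,h)\ge M(r,h)^k$ is false (take $h=e^z$, $k=3$). Log-convexity gives only $M(2r,h)/M(r,h)\to\infty$, which is what you need.
\item Iterating $M(r,g)\le CM(r,f)$ naively makes $C$ reappear inside $M$ at every step, so ``absorbing constants into a shift of the radius'' needs an induction with slack.
\end{enumerate}
Part (ii) is a valid reduction to (i), since $f\circ f$ and $g\circ g$ have fixed points (e.g.\ Theorem~\ref{Fixedpt2}).

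\textbf{How to close (i).} The argument below never uses the fixpoint.
\begin{enumerate}
\item From $Q(g)=aQ(f)+b$, get $T>0$ and $0<c\le 1\le C$ with $|g(w)|\ge c|f(w)|$ when $|f(w)|\ge T$, and $|g(w)|\le C|f(w)|$ when $|g(w)|\ge T$. Hence $M(r,g)\le CM(r,f)$ for large $r$.
\item Your interpolation through $g^kf^{N-k}(z)$, using $f\circ g^k=g^k\circ f$, gives $|g^N(z)|\ge c^N|f^N(z)|$ whenever $c^N|f^N(z)|\ge T$.
\item Choose $R_0$ so that for $r\ge R_0$: $M(r,g)>r$, $M(r,f)\ge r^2$, $M(r,g)\le CM(r,f)$ and $2CM(r/2,f)\le M(r,f)$. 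Induction then gives $M^n(R_0,g)\le \tfrac12 M^n(2R_0,f)$ for all $n$. The factor $\tfrac12$ is the slack that stops $C$ from compounding.
\item If $z\in A(f)$, shift $L$ so that $|f^{n+L}(z)|>M^n(2R_0,f)$ for all $n$. Since $M^{n-1}(2R_0,f)$ grows doubly exponentially, for $n\ge n_0$ we get $|g^{n+L}(z)|>c^{n+L}M^n(2R_0,f)\ge c^{n+L}\bigl(M^{n-1}(2R_0,f)\bigr)^2\ge M^{n-1}(2R_0,f)\ge M^{n-1}(R_0,g)$. Hence $z\in A(g)$.
\end{enumerate}

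The relation inverts as $Q(f)=a^{-1}Q(g)-a^{-1}b$, so the same argument gives $A(g)\subset A(f)$. Thus $A(f)=A(g)$, which gives (i), (ii) and (iii) at once. Both the appeal to fixed points of $f\circ f$ in (ii) and the unproved claim in (iii) become unnecessary.

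For the iterated-maximum-modulus definition of $A(f)$ used in this survey, this argument even yields the equality $A(f)=A(g)$ that the survey lists as a conjecture of Wang and Yang. The fixpoint hypothesis plays no role in your method.
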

\noindent\large{ Wang and Yang further conjectured that if $f$ and $g$ satisy the conditions of Theorem \ref{th1,}, then $A(f)=A(g).$ In the case when $f$ and $g$ have a special form, they further obtained:}
\begin{theorem}
\large({\cite{Wang_Yang}, Corollary 1)
Let $f$ and $g$ be two distinct permutable  entire functions of
finite order such that $f$ is of the following form: \\
$f(z) = P_0(z) + P_1(z)e^{Q_1(z)}$ or  $f(z)=P_0(z) + P_1(z)e^{Q_1(z)} + P_2(z)e^{Q_2(z)}$ \\
where $Q_i(z) \ (i = 1, 2)$ and $P_i(z) \ (i = 0, 1, 2)$ are polynomials, $P_0(z)$ is not a constant. Then $J(f)= J(g), \  T(f) = T(g), \ I_0(f ) = I_0(g)$ and $A(f^2) = A(g^2)$.
} 
\end{theorem}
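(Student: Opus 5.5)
The plan is to reduce the Corollary to Theorem 1 and Theorem 2(ii) of Wang and Yang. Those results give $J(f)=J(g)$, $T(f)=T(g)$, $I_0(f)=I_0(g)$ and $A(f^2)=A(g^2)$ as soon as there are a non-constant polynomial $Q$ and constants $a\neq 0$, $b$ with $Q(g)=aQ(f)+b$. So everything comes down to producing such a relation for the given $f$ and any distinct permutable $g$ of finite order. The natural source is the form-determining results of Section 2. In the spirit of Theorem 3 of Zheng--Zhou and of Ng's Theorem \ref{tw1}, we aim to show that $g=cf^n+d$, or more simply $g=cf+d$ with $c$ a root of unity. Then $Q(z)=z$, or a suitable power composed with an affine map, gives the needed relation.

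First, write $f=P_0+P_1e^{Q_1}$, or with the extra term $P_2e^{Q_2}$. The $Q_i$ must be non-constant, since otherwise $f$ would be a polynomial. Then $f$ has finite positive lower order equal to $\max\deg Q_i$. Moreover, $f$ satisfies a linear differential equation with polynomial coefficients: one of order one in the first case, and one of order two obtained by eliminating $e^{Q_1}$ and $e^{Q_2}$ in the second case. The inhomogeneous term comes from $P_0$.

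Second, apply Theorem \ref{Diff_Zheng} to conclude that $g$ satisfies a linear differential equation of the same order with polynomial coefficients. Solutions of such equations of finite order have the form $g=R_0+\sum R_je^{S_j}$ with polynomials $R_j, S_j$. Substitute this into $f\circ g=g\circ f$ and compare growth and exponential terms using Borel's lemma (the uniqueness of representations of exponential polynomials). This should force $\deg S_j=\deg Q_i$ and $S_j=Q_i\circ L$ for an affine $L$. It should also yield $R_0=cP_0+d$. The hypothesis that $P_0$ is non-constant is what pins down the polynomial part, because comparing the polynomial parts forces $R_0\circ P_0=P_0\circ R_0$ up to the constant shift. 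By Ritt-type arguments and the form $P_1(z)=e^{2s\pi i/p}z+d$ for polynomials commuting with transcendental maps, this commutation gives an affine relation. The conclusion should be $g=cf+d$ with $|c|=1$, possibly after passing to an iterate.

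Third, take $Q(z)=z$, $a=c$, $b=d$ and invoke the theorems above to obtain all four equalities. The main obstacle is the second step, namely the exponential-polynomial comparison in the three-term case, where cancellations between $e^{Q_1}$ and $e^{Q_2}$ after composition are delicate. There I would first compare the dominant growth along rays where $\operatorname{Re}Q_1$ is maximal. I would then use Nevanlinna's second main theorem to rule out spurious identities among the exponentials.
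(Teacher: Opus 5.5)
Your overall architecture is the right one. The survey gives no proof of this corollary; it presents it as a consequence of Wang--Yang's Theorems 1 and 2(ii) once a relation $Q(g)=aQ(f)+b$ is known. So your first and third steps are fine. The gap is your second step, which carries all the content.

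\textbf{The structural claim is false.} Finite-order entire solutions of linear differential equations with polynomial coefficients need not have the form $R_0+\sum R_je^{S_j}$.
\begin{itemize}
\item $\cos\sqrt z$ solves $4zw''+2w'+w=0$ and has order $1/2$, while such exponential polynomials have integer order.
\item Already in first order, $w=e^{z^2}\int_0^z e^{-t^2}\,dt$ solves $w'-2zw=1$ and has order $2$. It is not an exponential polynomial: multiply by $e^{-z^2}$, differentiate, and apply Borel's lemma, and you would need a polynomial $R_0$ with $R_0'-2zR_0=1$.
\end{itemize}
Theorem \ref{Diff_Zheng} only says that $g$ satisfies \emph{some} equation with unknown polynomial coefficients, so it gives you no structural information about $g$.

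\textbf{The comparison step also fails.} Even granting such a form for $g$, it is not a Borel-lemma computation. $f\circ g$ contains $e^{Q_1\circ g}$, which has infinite order, so neither side of $f\circ g=g\circ f$ is an exponential polynomial, and ``polynomial parts'' are not defined. Moreover, $R_0\circ P_0=P_0\circ R_0$ would not force $R_0=cP_0+d$, since the iterates of $P_0$ commute with $P_0$. Commuting polynomials are governed by Ritt's theorem; the Baker--Iyer result concerns a polynomial commuting with a transcendental map.

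\textbf{How to close the gap.} For the one-exponential form $f=P_0+P_1e^{Q_1}$ none of this is needed. Theorem 3 of Zheng--Zhou, quoted in Section 2, says directly that any non-linear finite-order $g$ commuting with $f$ is $g=Af+B$ with $A^{\deg Q_1}=1$. Here $g$ is non-linear because it is transcendental, as throughout Section 3. Then $Q(z)=z$ and your third step finish the proof. For the two-exponential form you need a genuine analogue of that form theorem. Your sketch (dominant rays plus the second main theorem) is too vague to supply it, so that case remains unproved.

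\textbf{Two smaller points.}
\begin{itemize}
\item In the two-term form only one of $Q_1,Q_2$ need be non-constant. If one is constant, or if $Q_1-Q_2$ is constant, you are back in the one-exponential case.
\item ``Passing to an iterate'' is both unnecessary and unhelpful. By P\'olya's theorem, $cf^n+d$ with $n\ge 2$ has infinite order. Also, a relation between $g$ and $f^n$ would give $A(f^{2n})=A(g^2)$ rather than $A(f^2)=A(g^2)$.
\end{itemize}
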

\noindent\large{Here $P_1, P_2$ can be constant. As $f$ is transcendental, so $P_1, Q_1$ cannot be constant simultaneously.}
\begin{theorem}\label{lem1}
\large{(\cite{Wang_Yang}, Lemma 1)
Let $f$ and $g$ be two distinct permutable  entire functions and
$q$ be a non-constant polynomial. Suppose that $q(g) = aq(f) + b, \ a (\neq 0), b \in \mathbb{C}$. Then, for $ n \geq 1$ and $z \in \mathbb{C},$ 
$ q(g^n(z))= a^n q(f^n(z))+ b(a^{n-1}+ a^{n-2}+a+1).$
}
\end{theorem}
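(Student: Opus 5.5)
The plan is to argue by induction on $n$, using only the relation $q(g)=aq(f)+b$ and the permutability $f\circ g=g\circ f$. For $n=1$ the claimed identity is exactly the hypothesis $q(g(z))=aq(f(z))+b$, where the geometric factor $a^{n-1}+\cdots+a+1$ reduces to $1$.

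For the inductive step, assume
\[
q(g^n(z))=a^nq(f^n(z))+b(a^{n-1}+\cdots+a+1)\quad\text{for all } z\in\C.
\]
We write $g^{n+1}(z)=g(g^n(z))$ and apply the hypothesis at the point $w=g^n(z)$, which gives $q(g^{n+1}(z))=a\,q(f(g^n(z)))+b$. The key observation is that permutability gives $f\circ g^n=g^n\circ f$; this follows by a trivial induction from $f\circ g=g\circ f$. Hence $q(f(g^n(z)))=q(g^n(f(z)))$.

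Now we apply the inductive hypothesis at the point $f(z)$ in place of $z$. This gives
\[
q(g^n(f(z)))=a^nq(f^{n+1}(z))+b(a^{n-1}+\cdots+1).
\]
Substituting, we get
\[
q(g^{n+1}(z))=a^{n+1}q(f^{n+1}(z))+b(a^n+\cdots+a)+b,
\]
which is the required form for $n+1$.

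There is no real obstacle. The only point needing care is to use permutability to move $f$ past $g^n$, so that the inductive hypothesis, which is valid for all $z$, can be applied at $f(z)$. The distinctness of $f$ and $g$ and the condition $a\neq0$ are not needed for this identity.
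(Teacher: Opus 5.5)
Your proof is correct and is essentially the paper's induction: apply $q(g)=aq(f)+b$ at $g^{n}(z)$, use $f\circ g^{n}=g^{n}\circ f$, and invoke the inductive hypothesis at $f(z)$. The only cosmetic difference is that the paper rewrites the relation as $q(f)=(q(g)-b)/a$ and so divides by $a$, whereas your direct multiplication bears out your remark that $a\neq 0$ (and distinctness of $f,g$) is not needed for the identity.
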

\noindent\large {We give an outline of the proof of above Theorem \ref{lem1} as mentioned in \cite{Wang_Yang}.\\
We  prove it by induction. The result clearly holds for $n = 1.$ Assume  $n > 1$ and suppose the assertion is true for $n-1,$ that is, 
$q \left(g^{n-1} (z)\right) = a^{n-1}q(f^{n-1}(z))+ b(a^{n-2} + a^{n-3}+ \cdots  +a+1).$ 
Thus,
$q \left(g^{n-1} \circ f\right) = a^{n-1}q(f^{n})+ b(a^{n-2} + a^{n-3}+ \cdots  +a+1).$
and hence, 
$q \left(f \circ g^{n-1} \right) = a^{n-1}q(f^{n})+ b(a^{n-2} + a^{n-3}+ \cdots  +a+1).$ \\
From the assumption of this lemma, we can get that $(q(g) - b)/a = q(f)$, hence\\
$\frac{q(g^n)-b}{a} = \frac{q(g)-b}{a} \circ g^{n-1}= q(f \circ g^{n-1})= a^{n-1}q(f^{n})+ b(a^{n-2} + a^{n-3}+ \cdots  +a+1).$ \\
Therefore $q(g^n(z))= a^n q(f^{n}(z))+ b(a^{n-1} + a^{n-2}+ \cdots  +a+1).$
}
\\
\noindent\large{ In the case when the polynomial $q(z)=z,$ Wang and Yang further answered a  question of Baker raised in the discussion preceding Theorem \eqref{S_permu}.}
\begin{theorem}
\large({\cite{Wang_Yang}, Theorem 3)
Let $f$ and $g$ be two  permutable  entire functions where
$g = af + b.$\\
Case 1. If $a = 1$ (and $b \neq 0$), then
	\begin{enumerate}
	\item[(i)] {$f(z)$ and $g(z)$ have no multiply connected Fatou components}
	\item[(ii)] {If $D$ is a (pre-)attractive domain, a (pre-)parabolic domain or a (pre-)Siegel disk of $f$ (or of $g$), then $D$ is a wandering domain or a (pre-)Baker domain of $g$ (or of $f$)}	
	\item[(iii)]
	{If $D$ is a (pre-)Baker domain of $f$ , then $D$ is a (pre-)Baker domain or a wandering domain of $g$.}	
	\end{enumerate}
Case 2. If $a \neq 1,$ then the respective Fatou components of $f$ and $g$ belongs to the same classes.
}
\end{theorem}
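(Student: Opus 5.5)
The plan is to first turn permutability into a conjugacy-type identity. Put $L(z)=az+b$, so $g=L\circ f$. Then $f\circ g=g\circ f$ reads $f(L(w))=L(f(w))$ for all $w\in f(\C)$. Since $f(\C)$ is open and nonempty, the identity theorem gives $f\circ L=L\circ f$ on all of $\C$. Consequently
\[ g^n=L^n\circ f^n=a^nf^n+b(a^{n-1}+\cdots+a+1), \]
which is the case $q(z)=z$ of Theorem \ref{lem1}. By Theorem \ref{yang}, $F(f)=F(g)$ and $|a|=1$. Moreover $L$ maps $F(f)$ onto itself, since it conjugates $f$ to itself; so $J(f)$ is invariant under $L$. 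All comparisons of component types will come from comparing limit functions of $f^{n_k}$ and $g^{n_k}$ on a common component $D$.

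\textbf{Case 1} ($a=1$, $b\neq0$). Here $g^n=f^n+nb$.

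For (ii), suppose $D$ is a (pre-)attracting, (pre-)parabolic or (pre-)Siegel component of $f$. Then along some subsequence $f^{n_k}$ has a finite limit on $D$, constant or a nonconstant bounded function. Hence $g^{n_k}=f^{n_k}+n_kb\to\ity$ locally uniformly on $D$. If $D$ were eventually periodic of one of these three types for $g$, then every limit function of $\{g^n\}$ on $D$ would be finite, which is a contradiction. By the classification in Theorem \ref{sec3, thms}, $D$ must therefore be wandering or (pre-)Baker for $g$. The roles of $f$ and $g$ are symmetric, because $f=g-b$ is again a translate.

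(iii) then follows from (ii) with roles swapped. If a (pre-)Baker domain $D$ of $f$ were (pre-)attracting, (pre-)parabolic or (pre-)Siegel for $g$, then by (ii) applied to $g$ it would be wandering or (pre-)Baker for $f$. That is consistent, so more care is needed. Instead I argue directly: if $D$ were (pre-)periodic of one of the three bounded types for $g$, then (ii) applied to $g$ shows that $D$ is wandering or Baker for $f$. We only have to exclude that it is of a bounded type, and this holds because $f^n\to\ity$ on $D$. So $D$ is (pre-)Baker or wandering for $g$.

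For (i), let $D$ be multiply connected for $f$. By Baker's theorem, $D$ is wandering, $f^n\to\ity$ on $D$, and for large $n$ the set $f^n(D)$ contains a round annulus $\{R_n<|z|<R_n'\}$ about $0$ whose width $R_n'-R_n$ exceeds $|b|$. The bounded complementary region of this annulus meets $J(f)$, say at $z_0$. By translation invariance, $z_0+kb\in J(f)$ for all $k\in\Z$. These points have consecutive spacing $|b|$ and tend to $\ity$, so some of them lie in the annulus. That contradicts the annulus lying in $F(f)$. Since $F(f)=F(g)$ and the argument is symmetric, neither $f$ nor $g$ has multiply connected components.

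\textbf{Case 2} ($a\neq1$). Conjugate by a translation to the fixed point $b/(1-a)$ of $L$, so that we may take $b=0$. Then $g=af$, $f(az)=af(z)$, $g^n=a^nf^n$, and $|a|=1$.

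If $a^N=1$, then $g^N=f^N$, and the component types for $f$ and $g$ coincide at once.

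If $a$ is not a root of unity, then $J(f)$ is invariant under the dense group of rotations $z\mapsto a^kz$. Hence $J(f)$ is invariant under all rotations about $0$. Every Fatou component is then rotation invariant, being a disk or a round annulus about $0$. So $g^n(D)=a^nf^n(D)=f^n(D)$ as sets, and $D$ is periodic, preperiodic or wandering for $f$ exactly when it is for $g$. For the type, choose subsequences with $a^{n_k}\to\alpha$, $|\alpha|=1$. Then the limit functions of $g^{n_k}$ are $\alpha$ times those of $f^{n_k}$. Multiplying by $\alpha$ preserves being a finite constant, being $\ity$, and being nonconstant. To separate attracting from parabolic, look at the multiplier of the limit point: since rotation invariance forces that limit point to be the fixed point $0$, and the derivative identities are $g'(0)=af'(0)$, the value of $|\cdot|$ is unchanged.

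The main obstacle I expect is the bookkeeping in Case 2. One must show that the attracting versus parabolic distinction, and the Siegel case, transfer exactly, including the preperiodic components. In Case 1 (i), the delicate point is making precise the annulus-width estimate from Baker's theorem on multiply connected wandering domains.
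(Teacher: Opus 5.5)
The survey quotes this result from Wang--Yang without proof, so your proposal has to stand on its own. Several parts are sound: the reduction to $f\circ L=L\circ f$ and $g^n=L^n\circ f^n$, the limit-function argument for Case~1(ii), and the translation argument for Case~1(i). For (i), the round annuli of modulus tending to $\infty$ inside $f^n(D)$ come from the later theorem of Zheng and of Bergweiler--Rippon--Stallard, not from Baker's, but with that citation the step is fine.

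The genuine gap is Case~1(iii). Your comparison of limit functions only runs one way. A finite limit of $f^{n_k}$ forces $g^{n_k}=f^{n_k}+n_kb\to\infty$. But $f^{n_k}\to\infty$ says nothing about $g^{n_k}$: behaviour like $f^n(z)\approx w_0-nb$ on $D$ is perfectly compatible with $g^n\to w_0$. Your ``direct'' argument reapplies (ii) to $g$, which, as you noticed, is consistent with $D$ being Baker for $f$. It then excludes a bounded type using $f^n\to\infty$, but that only excludes a bounded type for $f$, which was never in doubt. So nothing in the proposal rules out $D$ being (pre-)attracting, (pre-)parabolic or a (pre-)Siegel disk of $g$.

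The missing ingredient is that translation by $b$ permutes the Fatou components and commutes with both maps. Suppose $D$ is not wandering for $g$, so it is eventually periodic for both maps. Let $U_n$ and $V_n$ be the Fatou components containing $f^n(D)$ and $g^n(D)$, and let $N$ be a common multiple of the two eventual periods. From $g^n(D)=f^n(D)+nb$ you get $V_n=U_n+nb$. For large $n$, the relations $U_{n+N}=U_n$ and $V_{n+N}=V_n$ then give $V_n+Nb=V_n$. Since $g\circ L=L\circ g$, also $g^N(z+Nb)=g^N(z)+Nb$.
\begin{itemize}
\item If $V_n$ were attracting or parabolic, with $g^{Nk}\to w$ on $V_n$, then for $z\in V_n$ the sequence $g^{Nk}(z+Nb)$ would tend both to $w$ (as $z+Nb\in V_n$) and to $w+Nb$. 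This is absurd because $b\neq0$.
\item If $V_n$ were a Siegel disk with center $w$, then $w$ and $w+Nb$ would be two distinct fixed points of $g^N$ in $V_n$. This is impossible, since $g^N|_{V_n}$ is conjugate to an irrational rotation.
\end{itemize}
Hence $V_n$ is a Baker domain of $g$, which proves (iii). By the symmetry $f=g-b$, the same argument shows that in (ii) the domain $D$ must in fact be wandering for $g$.

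Finally, the subcase of Case~2 you flagged as the main obstacle never occurs. After conjugating so that $b=0$, the identity $f(az)=af(z)$ gives $c_n(a^n-a)=0$ for the Taylor coefficients of $f$. Since $f$ is non-linear, $c_n\neq0$ for some $n\ge2$, hence $a^{n-1}=1$. So $a$ is a root of unity, which also yields $|a|=1$ without Theorem~\ref{yang}, and $g^N=L^N\circ f^N=f^N$ settles Case~2 completely. The rotation-invariance discussion can be dropped. As written, it also asserted $g^n(D)=f^n(D)$ as sets, which you only know for the containing components.
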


\noindent \large{In 2016, Benini, Rippon and Stallard in their paper showed some progress on when $J(f)=J(g)$ will be satisfied for permutable  entire functions. They have considered the existence of fast escaping sets and wandering domains. Their results are mentioned in the following theorems:}

\begin{theorem} \label{not_fast_escape}
\large{(\cite{Rippon}, Proposition 3.2)
Let $f$ and $g$ be permutable  entire functions and let $U$ be a Fatou component of $f$ that is not contained in the fast escaping set of $f.$ Then $g(U) = V$, where V is a Fatou component of $f$ that is not contained in the fast escaping set of $f.$ 
}
\end{theorem}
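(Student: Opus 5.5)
The plan is to reduce everything to one growth estimate: \emph{$g$ cannot map a point outside $A(f)$ into $A(f)$.} Combined with the known identity $J(f)=\partial A(f)$ of Bergweiler and Hinkkanen (equivalently, $J(f)\subset\overline{A(f)}$), this should force $g(U)$ into a single Fatou component of $f$. I also use the fact of Rippon and Stallard that a Fatou component of $f$ meeting $A(f)$ lies inside $A(f)$. Since $U\not\subset A(f)$, this gives $U\cap A(f)=\emptyset$.

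\emph{Step 1: $g^{-1}(A(f))\subset A(f)$.} By Baker's theorem (Theorem 7 of \cite{Baker_german}) there are $p\in\N$ and $R_0>0$ with $M(r,g)<M(r,f^p)\le M^p(r,f)$ for $r>R_0$. Suppose $g(z)\in A(f)$, so that $|f^{n+L}(g(z))|>M^n(R,f)$ for all $n\ge 0$. Using permutability,
\[
M^n(R,f)<|f^{n+L}(g(z))|=|g(f^{n+L}(z))|\le M\bigl(|f^{n+L}(z)|,g\bigr)\le M^p\bigl(|f^{n+L}(z)|,f\bigr),
\]
where the last inequality holds once $|f^{n+L}(z)|>R_0$. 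A bounded orbit cannot satisfy the displayed inequality, so the orbit of $z$ is unbounded. Since $M(\cdot,f)$ is increasing, the inequality gives $|f^{n+L}(z)|>M^{n-p}(R,f)$ for large $n$, hence $z\in A(f)$ with a new constant $L'=L+p$. I should check that the possible failure of $|f^{n+L}(z)|>R_0$ for small $n$ does no harm; the definition of $A(f)$ only needs the bound for $n\ge 0$ after shifting $L$, and we may enlarge $R$ if necessary. This step gives $g(\C\setminus A(f))\subset \C\setminus A(f)$.

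\emph{Step 2: $g(U)\subset F(f)$.} Suppose the open set $g(U)$ meets $J(f)$. Since $J(f)\subset\overline{A(f)}$, the set $g(U)$ then meets $A(f)$. So some $z\in U$ has $g(z)\in A(f)$, and Step 1 gives $z\in A(f)$, contradicting $U\cap A(f)=\emptyset$. Since $g(U)$ is connected, it lies in a single component $V$ of $F(f)$. Moreover $g(U)\cap A(f)=\emptyset$ by Step 1, so $V\not\subset A(f)$ (again using that a component meeting $A(f)$ lies in it).

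\emph{Step 3: $g(U)=V$. This is where I expect the real difficulty.} Suppose $w\in V\cap\partial g(U)$. Choose $z_k\in U$ with $g(z_k)\to w$. Passing to a subsequence, either $z_k\to\zeta\in\partial U\subset J(f)$ or $z_k\to\infty$.

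In the first case $g(\zeta)=w\in V$, so a small disc $N$ about $\zeta$ satisfies $g(N)\subset V$, hence $g(N)\cap A(f)=\emptyset$. But $N$ contains points of $J(f)$ arbitrarily near $\zeta$. I would apply Step 2 to the Fatou components of $f$ adjacent to $\zeta$, and try to show that $g$ maps a neighbourhood of a point of $J(f)$ into $F(f)$ only if that point lies in $F(f)$. Concretely: at $\zeta$, the iterates $f^n$ blow up $N$ by Montel's theorem, while $f^n\circ g=g\circ f^n$ stays normal on $N$ because $g(N)\subset V\subset F(f)$. I would try to derive a contradiction from this, using that $g$ is non-constant and $f^n(N)$ eventually covers all of $\C$ except at most one point.

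In the second case $w$ is an asymptotic-type value of $g$ along a path in $U$. Here I would try to use that $f^n(U)$ and $f^n(V)$ avoid $A(f)$, together with the same normality comparison, to rule out $w\in V$. This boundary analysis, rather than the inclusion $g(U)\subset V$, is the main obstacle. If it resists, I would settle for proving the inclusion $g(U)\subset V$, which is what the later applications actually use.
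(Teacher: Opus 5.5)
Your Steps 1 and 2 are correct, and together they prove that $g(U)\subset V$ for a Fatou component $V$ of $f$ with $V\not\subset A(f)$. The ingredients are the backward invariance $g^{-1}(A(f))\subset A(f)$ from Baker's growth comparison, then $J(f)\subset\overline{A(f)}$, the open mapping theorem, and the fact that a Fatou component meeting $A(f)$ lies in $A(f)$. This is the standard argument for the result; the survey itself gives no proof, only the citation.

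One bookkeeping repair is needed in Step 1. ``The orbit is unbounded'' is not what you need. Instead, note that $|g(f^{n+L}(z))|>M^n(R,f)>M(R_0,g)$ for $n\ge N_0$ forces $|f^{n+L}(z)|>R_0$ for $n\ge N_0$. Then, with $N_1=\max\{N_0,p\}$, you get $|f^{m+N_1+L}(z)|>M^{m+N_1-p}(R,f)\ge M^m(R,f)$ for all $m\ge 0$. So $z\in A(f)$ with constant $L+N_1$.

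The genuine gap is Step 3, and it cannot be closed, because the equality $g(U)=V$ is false in general. Take $f=g=\lambda e^z$ with $0<\lambda<1/e$, the exponential example the survey itself mentions. Then $F(f)$ is a single completely invariant attracting basin $V$. It contains the real interval $(-\infty,q)$, where $q$ is the repelling real fixed point, so $0\in V$. Also $U=V$ is not contained in $A(f)$, since it contains the attracting fixed point. But $0$ is an omitted value of $g$, so $g(U)=V\setminus\{0\}\neq V$. This is exactly your Case 2: $z_k=-k\in U$, $z_k\to\infty$, and $g(z_k)\to 0\in V$.

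Your Case 1, by contrast, can be disposed of. Suppose $g(\zeta)\in F(f)$. Then $\{g\circ f^n\}=\{f^n\circ g\}$ is normal near $\zeta$. Along any subsequence on which $g\circ f^{n}$ converges locally uniformly, the $f^{n}$ eventually omit one of two sets, each containing at least two points:
\begin{itemize}
\item the open set $\{|g|>K\}$, if the limit is finite;
\item the set $\{|g|\le K\}$ with $K>|g(0)|$, if the limit is $\infty$.
\end{itemize}
Montel's theorem then makes $\{f^n\}$ normal near $\zeta$. Hence $g(J(f))\subset J(f)$, and Case 1 never occurs. Asymptotic values of $g$ along paths in $U$ are the only obstruction, and the example shows they really occur.

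So the statement should read $g(U)\subset V$, which is the form in which the cited proposition of Benini, Rippon and Stallard is stated; the ``$=$'' in the survey is a misquotation. Only the inclusion is used downstream: Theorem \ref{anna} follows by applying Montel's theorem to $g^n(U)\subset F(f)$. Your fallback is therefore the correct theorem, and Step 3 should simply be dropped.
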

\noindent\large{ Another important contribution of \cite{Rippon} to permutable entire functions is the following result: }
\begin{theorem} \label{multiply_connec_WD}
\large{(\cite{Rippon}, Proposition 3.3)
Let $f$ and $g$ be permutable entire functions and let $U$ be a multiply connected wandering domain of $f$. Then $g(U)$ is a multiply connected wandering domain of $f$.
}
\end{theorem}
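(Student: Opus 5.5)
We need to prove: if $f,g$ are permutable entire functions and $U$ is a multiply connected wandering domain of $f$, then $g(U)$ is a multiply connected wandering domain of $f$.

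The plan is to combine Theorem \ref{not_fast_escape} (in contrapositive form) with known structure results for multiply connected wandering domains (Baker; Rippon--Stallard). The facts we use are:
\begin{enumerate}
\item[(a)] Every multiply connected Fatou component of a transcendental entire function is a wandering domain, is bounded, and lies in $A(f)$.
\item[(b)] Baker's theorem: if $U$ is such a component, then for large $n$ the set $f^n(U)$ contains a round annulus $\{r_n<|z|<R_n\}$ with $R_n/r_n\to\infty$. These annuli surround $0$ and exhaust the plane.
\end{enumerate}

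\textbf{Step 1: $g(U)$ lies in a single Fatou component $V$ of $f$.} Permutability gives $g(F(f))\subset F(f)$; this is the same invariance used in Theorem \ref{not_fast_escape}. Indeed, on $U$ the family $f^n\circ g=g\circ f^n$ is normal, because $g$ is continuous on $\widehat{\C}$-valued limits away from $\infty$. One checks that the limit $\infty$ is also handled by Baker's argument. Since $g(U)$ is connected, it lies in one component $V$ of $F(f)$.

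\textbf{Step 2: $V$ is multiply connected.} This is the main obstacle. By (b), $U':=f^n(U)$ contains a large annulus $\{r_n<|z|<R_n\}$. Take a round circle $\gamma\subset U'$ that separates the origin from $\infty$ and also separates points of $J(f)$ (for instance, points of $J(f)$ lying in the bounded complementary component of $U'$). Because $g\circ f^n=f^n\circ g$, we have $g(U')=f^n(g(U))\subset f^n(V)$.

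We want $g(\gamma)$ to be a closed curve in $F(f)$ that is not null-homotopic in $F(f)$. Suppose instead that $g(\gamma)$ were contractible in the Fatou component containing it. Then by the maximum principle applied to $f^k$ on the region bounded by $g(\gamma)$, together with $|f^k|\to\infty$ uniformly on $U'$, we would get a contradiction with $J(f)$ being nonempty near the inside. The cleanest route is to use that $g$ is transcendental or a polynomial, so $|g|$ is large on $\gamma$ when $|z|=r$ is large and chosen outside a small exceptional set. Concretely, choose $r$ with $\min_{|z|=r}|g(z)|$ comparable to $M(r,g)$; this is possible in the long annuli by a Harnack/Hadamard argument applied to $\log|g|$ on a harmonic annulus free of zeros. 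Then $g(\gamma)$ winds around the disc $\{|w|<\min_\gamma|g|\}$, and this disc contains points of $J(f)$. Hence $g(\gamma)$ is not contractible in $F(f)$, so the component containing it is multiply connected. That component is $f^n(V)$'s component, and pulling back (a preimage of a multiply connected component is multiply connected, or else use $n=0$ directly with the annulus inside $U$ itself, for $U$ far out) shows $V$ is multiply connected.

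\textbf{Step 3: conclude.} By (a), $V$ is a multiply connected wandering domain of $f$. It remains to show $g(U)=V$ rather than $g(U)\subsetneq V$. Properness of $g$ between Fatou components (via the boundary map) together with Theorem \ref{not_fast_escape}-type arguments should give this. I expect the winding estimate in Step 2 to need the most care.
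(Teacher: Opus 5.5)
The gap is in Step~1, and every later step depends on it. You claim that permutability gives $g(F(f))\subset F(f)$ because $\{f^n\circ g\}=\{g\circ f^n\}$ is normal on $U$. That reasoning works only when the limit functions of $\{f^n\}$ on $U$ are finite. On a multiply connected wandering domain $f^n\to\infty$ locally uniformly (your own fact (a) gives $U\subset A(f)$), so $\infty$ is the \emph{only} limit function. Since $g$ is transcendental (a polynomial commuting with $f$ is linear, a trivial case), nothing prevents $g\circ f^{n_k}$ from taking small values on $U$. Baker's lemma (Theorem~\ref{sec3,thm1'}) excludes $\infty$ as a limit function for exactly this reason. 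Theorem~\ref{not_fast_escape} concerns components \emph{not} contained in $A(f)$, so its contrapositive says nothing about $U$.

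In fact, if $g(F(f))\subset F(f)$ held for every permutable pair, Montel's theorem applied to $\{g^k\}$ on $F(f)$ (whose images would then avoid $J(f)$) would give $F(f)\subset F(g)$. By symmetry this would settle Baker's Problem~A outright. The exclusion of simply connected fast escaping wandering domains in Theorem~1.3 of \cite{Rippon} is there because this invariance is not available in general. So ``one checks that the limit $\infty$ is also handled'' is the entire content of the proposition.

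Step~2 cannot supply it. Calling $g(\gamma)$ non-contractible ``in $F(f)$'' already presupposes $g(\gamma)\subset F(f)$. The choice of $r$ with $\min_{|z|=r}|g|$ comparable to $M(r,g)$ is also unfounded. $g$ may have zeros in these annuli, and ruling out small values of $g$ there is essentially what must be proved. On a zero-free annulus $\log|g|$ is not a positive harmonic function, so Harnack gives nothing; compare $e^z$, whose minimum modulus on $|z|=r$ is $e^{-r}$.

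What is missing is a lower bound for $|g|$ along orbits in $U$, and this is where multiple connectivity must enter. One way to get it:
\begin{enumerate}
\item[(1)] Fix $z\in U$, put $w=f^n(z)$, and take a small disc $D_w\subset f^n(U)$ about $w$ with $g(D_w)\subset D(g(w),1)$.
\item[(2)] The power-map behaviour of $f^n$ on multiply connected wandering domains (Bergweiler--Rippon--Stallard) implies that for large $N$ the set $f^N(D_w)$ contains a whole circle $\{|\zeta|=\rho\}$ with $\rho\ge|f^{n+N}(z)|^{c}$ for a fixed $c>0$.
\item[(3)] Then $M(\rho,g)\le\sup_{D_w}|g\circ f^N|=\sup_{D_w}|f^N\circ g|\le M^N(|g(w)|+1,f)$. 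Comparing with $|f^{n+N}(z)|\ge M^{n+N-L}(R,f)$ forces $|f^n(g(z))|=|g(f^n(z))|>M^{n-L-1}(R,f)-1$ for all large $n$.
\item[(4)] Thus $g(U)\subset A(f)$. Since $g(U)$ is open and repelling periodic points (never in $A(f)$) are dense in $J(f)$, we get $g(U)\subset\operatorname{int}A(f)\subset F(f)$.
\end{enumerate}
For simply connected fast escaping wandering domains the images contain no such circles, which is why the statement is special to the multiply connected case.

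After this, no minimum-modulus estimate is needed.
\begin{enumerate}
\item[(a)] $g(J(f))\subset J(f)$: a repelling periodic point $p$ of $f$ with $g'(p)\ne 0$ is sent to a repelling periodic point of $f$ with the same multiplier, and such $p$ are dense in $J(f)$.
\item[(b)] Suppose the component $V\supset g(U)$ were simply connected. Take a Jordan curve $\gamma\subset U$ surrounding a complementary component of $U$. Then $\partial g(\operatorname{int}\gamma)\subset g(\gamma)\subset V$, hence $g(\operatorname{int}\gamma)\subset V\subset F(f)$, because every component of $\mathbb{C}\setminus V$ is unbounded. This contradicts $g(J(f))\subset J(f)$, since $\operatorname{int}\gamma$ meets $J(f)$. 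So $V$ is multiply connected and therefore wandering.
\item[(c)] $U$ is bounded, so $\partial g(U)\subset g(\partial U)\subset J(f)$. Hence $g(U)$ is closed in $V$ and $g(U)=V$. This replaces your unspecified ``properness'' argument in Step~3.
\end{enumerate}
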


\begin{theorem} \label{anna}
\large({\cite{Rippon}, Theorem 3.1)
Let $f$ and $g$ be permutable entire functions. Then 
$F^*(f) \subset F(g)$ and $F^*(g) \subset F(f)$ where $F^*(f),F^*(g)$ represents union of those Fatou components which are not simply connected Fast Escaping wandering domains.
}
\end{theorem}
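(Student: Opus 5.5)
By symmetry it suffices to prove $F^*(f)\subset F(g)$. I will fix a component $U$ of $F(f)$ that is not a simply connected fast escaping wandering domain, and show that $\{g^n\}$ is normal on $U$. The key tool is permutability: $f^n\circ g = g\circ f^n$, and more generally $f^n\circ g^m=g^m\circ f^n$. I will split into two cases according to the type of $U$.

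\emph{Case 1: $U\not\subset A(f)$.} By Theorem \ref{not_fast_escape}, $g(U)$ lies in a Fatou component $V_1$ of $f$ with $V_1\not\subset A(f)$. Applying the theorem repeatedly, each $g^m(U)$ lies in a Fatou component $V_m$ of $f$ that is not fast escaping. In particular $g^m(U)\subset F(f)$ for all $m$. Since $J(f)$ is a perfect set containing infinitely many points, the functions $g^m$ on $U$ all omit at least two fixed points of $J(f)$ (for example two distinct repelling periodic points of $f$, which exist by \cite{Baker_permu2}). Montel's theorem then shows that $\{g^m\}$ is normal on $U$, so $U\subset F(g)$. The one point to verify is that $g^m(U)$ really avoids $J(f)$. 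Theorem \ref{not_fast_escape} says $g(U)=V$ is a Fatou component of $f$, which gives this directly. I will also note that complete invariance of $J(f)$ under $g$ (namely $g(J(f))\subset J(f)$) is not needed here.

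\emph{Case 2: $U\subset A(f)$ but $U$ is not a simply connected wandering domain.} A Fatou component contained in $A(f)$ cannot be preperiodic: periodic components of transcendental entire functions escape, if at all, at most at the rate of Baker domains, which is not fast. Hence $U$ is wandering. By hypothesis $U$ is then multiply connected. Theorem \ref{multiply_connec_WD} gives that $g(U)$, and by iteration each $g^m(U)$, is a multiply connected wandering domain of $f$, hence contained in $F(f)$. The Montel argument from Case 1 then shows $\{g^m\}$ is normal on $U$.

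The main obstacle is the claim in Case 2 that a fast escaping Fatou component must be wandering. I would argue it as follows. If $U$ were (pre)periodic inside $I(f)$, it would be eventually a Baker domain $W$. Known growth estimates give $\log|f^n(z)|=O(n)$ on a Baker domain, by a hyperbolic-metric comparison with a half-plane. That rate is far slower than $M^n(R,f)$, which contradicts fast escape. Alternatively, I would quote the standard fact from \cite{Bergweiler2} that the components of $F(f)$ meeting $A(f)$ are wandering. Both cases together give $F^*(f)\subset F(g)$, and exchanging the roles of $f$ and $g$ gives $F^*(g)\subset F(f)$.
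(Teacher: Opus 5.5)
Your proof is correct and is essentially the argument the paper indicates. For a component $U$ of $F^*(f)$, you use Theorem~\ref{not_fast_escape} when $U\not\subset A(f)$, and Theorem~\ref{multiply_connec_WD} when $U\subset A(f)$ (such a $U$ is necessarily wandering, hence multiply connected), to get $g^m(U)\subset F(f)$ for all $m$, and then conclude by Montel's theorem since the $g^m$ omit the infinite set $J(f)$ on $U$; the auxiliary fact that Fatou components in $A(f)$ are wandering is one the paper also takes for granted, and citing Bergweiler--Hinkkanen for it suffices.
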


\noindent \large{Theorem \ref{anna} can be proved using Theorems \ref{not_fast_escape}, \ref{multiply_connec_WD} and considering Montel's theorem. The next theorem  is again an immediate consequence of above theorem.}

\begin{theorem}
\large({\cite{Rippon}, Theorem 1.3)
Let $f$, $g$  be permutable entire functions such that $f$, $g$ do not have simply connected fast escaping wandering domains. Then $J(f)=J(g)$.
}
\end{theorem}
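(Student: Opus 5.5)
The plan is to obtain the statement directly from Theorem \ref{anna} by double inclusion of Fatou sets, $F(f)=F(g)$, and then pass to complements. Throughout, I will use the convention that $F^*(f)$ is the union of those components of $F(f)$ that are \emph{not} simply connected fast escaping wandering domains; the same convention applies to $g$.

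The first step is to observe that the hypothesis collapses the starred sets onto the full Fatou sets. Since $f$ has no simply connected fast escaping wandering domains, no component of $F(f)$ is excluded from $F^*(f)$, so $F^*(f)=F(f)$. The same reasoning gives $F^*(g)=F(g)$. Theorem \ref{anna} then yields
\[
F(f)=F^*(f)\subset F(g)\qquad\text{and}\qquad F(g)=F^*(g)\subset F(f).
\]
Hence $F(f)=F(g)$, and taking complements in $\C$ gives $J(f)=J(g)$.

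I do not expect a real obstacle here, because all the substance lies in Theorem \ref{anna}. If that theorem had to be unpacked, the key step would be to show that $g$ maps a component $U$ of $F^*(f)$ into $F(f)$ in a way that keeps $\{g^n\}$ normal on $U$. Theorems \ref{not_fast_escape} and \ref{multiply_connec_WD} guarantee that $g^n(U)$ always lies in a Fatou component of $f$ of the same non-excluded type. Normality of $\{g^n\}$ on $U$ would then follow from Montel's theorem, since $g^n(U)\subset F(f)$ omits $J(f)$, which contains at least two points. The only delicate point is the case where $U$ is fast escaping; such a $U$ must then be multiply connected, and that case is handled by Theorem \ref{multiply_connec_WD}. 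For the present statement, however, citing Theorem \ref{anna} suffices.
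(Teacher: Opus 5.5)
Your proposal is correct and follows the paper's route exactly: the hypothesis gives $F^*(f)=F(f)$ and $F^*(g)=F(g)$, so Theorem \ref{anna} yields $F(f)=F(g)$ and hence $J(f)=J(g)$, which is the ``immediate consequence'' the paper invokes. Your supplementary sketch of how Theorem \ref{anna} itself follows from Theorems \ref{not_fast_escape} and \ref{multiply_connec_WD} together with Montel's theorem is also consistent with the paper's remark.
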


\noindent \large{An important point  to be noted is that any Fatou component contained in $A(f)$ is always a wandering domain. As a result, a Baker domain can never be contained inside the fast escaping set. Also, given a fast escaping wandering domain, it  can either be simply connected or multiply connected. It is known that multiply connected wandering domains are always fast escaping \cite{rip1}. Moreover,  it has been observed that most known simply connected wandering domains are not fast escaping.}

\noindent \large{Now, we  mention some results about periodic entire functions. The following result is by Gross in 1966.

\begin{theorem}
\large{(\cite{Gross}, Theorem 3)
For two  non-linear permutable functions $f$ and $g$,   if their composition $h=f \circ g$ is of finite order, then $h$ is not  periodic.
}
\end{theorem}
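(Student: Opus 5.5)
The plan is to show that the hypotheses of the theorem can only be met when both functions are polynomials, and that $h$ is then a polynomial of degree at least $4$, which cannot be periodic. The periodicity assumption will enter only at the very end. The three ingredients are Pólya's growth lemma for compositions (already used above in the proof of Zheng's Corollary), the Baker--Iyer theorem that a polynomial commuting with a transcendental entire function has the form $e^{2s\pi i/p}z+d$, and a degree count.

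\textbf{Step 1 (both transcendental).} Suppose $f$ and $g$ are both transcendental. Pólya's lemma \cite{polya} gives constants $c\in(0,1)$ and $r_0$ such that
\[
M(r,f\circ g)\ \geq\ M\bigl(c\,M(r/2,g),f\bigr), \qquad r>r_0.
\]
Since $f$ is transcendental, for every $K>0$ we have $\log M(t,f)\geq t^{K}$ for all large $t$. Since $g$ is transcendental, $M(r/2,g)$ eventually exceeds every power of $r$. Combining these, $\log M(r,h)$ eventually exceeds every power of $r$, and indeed grows faster than $e^{r^{\epsilon}}$ for small $\epsilon$. Either way $\rho(h)=\infty$, contradicting the hypothesis that $h$ has finite order. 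I expect this to be the main technical step: one has to quote Pólya's inequality correctly and check that it really forces infinite order, as opposed to merely large finite order. The way to do this is to compare $\log\log M(r,h)$ with $\log r$ using the two superpolynomial growth facts above.

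\textbf{Step 2 (exactly one transcendental).} Suppose exactly one of $f,g$ is transcendental and the other is a polynomial. The two functions commute, so by the result of Baker \cite{Baker_german} and Iyer \cite{iyer} the polynomial must be of the form $e^{2s\pi i/p}z+d$. This is linear, which contradicts the assumption that both $f$ and $g$ are non-linear.

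\textbf{Step 3 (both polynomials).} By Steps 1 and 2, both $f$ and $g$ are polynomials. Each has degree at least $2$, so $h=f\circ g$ is a polynomial of degree $\deg f\cdot\deg g\geq 4$. Suppose $h(z+\omega)=h(z)$ for some $\omega\neq0$. Then $h(z)-h(0)$ vanishes at every point $k\omega$, $k\in\Z$, so this non-constant polynomial has infinitely many zeros, which is impossible. Hence $h$ is not periodic.

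Note that if the surrounding convention of the section is that $h$ is transcendental, then Step 3 cannot occur at all, and the contradiction is already reached in Step 1 or Step 2.
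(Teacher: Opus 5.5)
\textbf{There is a genuine gap: Step 1 rests on a false growth estimate, and that step is where the theorem has its content.}

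For transcendental $f$ one only knows $\log M(t,f)/\log t\to\infty$, i.e.\ $M(t,f)\geq t^{K}$ for large $t$. The inequality $\log M(t,f)\geq t^{K}$ for every $K$ would mean $\rho(f)=\infty$, and it already fails for $f=e^{z}$. With the correct estimate, P\'olya's inequality yields only $\log M(r,h)\geq K\log M(r/2,g)+K\log c$, which is compatible with finite order. What P\'olya's lemma really gives is the dichotomy the paper records right after this theorem. If $g$ is transcendental and $f\circ g$ has finite order, then $\rho(f)=0$. Since $h=f\circ g=g\circ f$, when both functions are transcendental both have order zero.

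This case is not empty. For $f(z)=\sum_{n\geq 0}2^{-n^{2}}z^{n}$ one has $\log M(r,f)=O((\log r)^{2})$. Take $g=f$, which is non-linear and trivially permutable with $f$. Then $\log M(r,f\circ g)\leq\log M(M(r,f),f)=O((\log r)^{4})$, so $\rho(h)=0$ even though neither function is a polynomial. This $h$ is of course not periodic, so the theorem is not threatened. But your claim in Steps 1--2, that the hypotheses force $f$ and $g$ to be polynomials, is false.

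Steps 2 and 3 are correct, but they only dispose of the trivial situations. The substance of Gross's theorem lies in the case you discarded: $f$ and $g$ transcendental of order zero with $h$ periodic. Then $1\leq\rho(h)<\infty$, since a non-constant periodic entire function has order at least $1$. This covers everything in Section 3, where all functions are transcendental; your closing remark, which would make the theorem vacuous there, should have been a warning sign. For instance, with $g=f$ the theorem contains the assertion that $f\circ f=e^{z}$ has no entire solution. P\'olya's lemma only tells you such an $f$ would have order zero.

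The example above shows that commuting non-linear transcendental pairs with $f\circ g$ of finite order exist. So any correct proof must use the periodicity of $h$, together with $f\circ g=g\circ f$, in the transcendental case. Your argument uses periodicity only in Step 3, i.e.\ only for polynomials. The survey itself gives no proof, only the citation to Gross. Its remark on P\'olya's dichotomy (case (ii): $g$ transcendental of finite order, $f$ of order zero) already shows that the reduction to polynomials cannot be right.
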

\begin{remark}
If we lift the restriction of finite order on $h$ in above theorem, then $h$ could be periodic. For instance, for $f(z)=e^z=g(z),$  $h(z)=e^{e^z}$  is periodic.
\end{remark} 
\noindent \large{By an old result of Polya \cite{polya}, there are only two choices  for $f$ and $g$ whenever $f\circ g$ is of finite order namely,  (i) either $g$ is a polynomial and $f$ is of finite order, or (ii) $g$ is not a polynomial  of finite order and $f$ is of zero order.}

\noindent \large{Now, we see some results for different subsets of Fatou and Julia sets as in how the subsets of the composite function are related to subsets of individual functions, how the points characterised by one function behave with respect to others etc. In this direction, some results have been proven by Baker, for instance, regarding repelling fixed points \cite{Baker_permu2}.}

\noindent \large{Fatou ({\cite{F2}, p. 354) showed the importance of the periodic points in the dynamics of entire function by establishing that every point in the Julia set is a limit point of periodic points. This result of Fatou was further strengthened by Baker \cite{Baker_permu2} who established that each point of Julia set is a limit point of repelling periodic points, in other words, Julia set equals the closure of the set of repelling periodic points.
 Following are some results  regarding the number of fixed points of composite entire functions and their properties. These results were proved by Bergweiler in 1991.}

\begin{theorem} \label{Fixedpt2}
\large({\cite{Bergweiler1}, Theorem 4)
Let $f$ and $g$ be  entire functions. Then $f \circ g$ has infinitely many repelling fixed points. 
}
\end{theorem}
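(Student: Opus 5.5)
The plan is to produce fixed points of $h=f\circ g$ as fixed points of univalent \emph{inverse branches} that map a domain compactly into itself. If $W$ is a Jordan domain and $\phi$ is a univalent branch of $h^{-1}$ with $\phi(\overline W)\subset W$, then by the Schwarz lemma (or the Denjoy–Wolff argument) $\phi$ has a unique attracting fixed point in $W$. That point is then a repelling fixed point of $h$. So it suffices to construct infinitely many pairwise disjoint such pairs $(W,\phi)$. Throughout I take $f$ and $g$ transcendental, which is the setting of this section.

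The tool I would use is the Ahlfors five islands theorem, applied to each factor. First, fix five Jordan domains $B_1,\dots,B_5$ with pairwise disjoint closures. By the five islands theorem, $g$ has infinitely many islands over one of them, say $B_1$: domains $V_k$, tending to $\infty$, mapped conformally by $g$ onto $B_1$. This gives univalent branches $\psi_k=(g|_{V_k})^{-1}\colon B_1\to V_k$.

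Second, I need $f$ to map part of $V_k$ univalently onto a domain containing $\overline{B_1}$. For this I would again use the five islands theorem for $f$, but now with respect to five small disks $D_1,\dots,D_5$, each slightly larger than a copy of $\overline{B_1}$ and placed near $B_1$. To avoid losing which target is hit, I would choose the $B_i$ symmetrically, so that for any $i$ a suitable $D_i$ contains $\overline{B_i}$ (for example, $D_i$ a slight enlargement of $B_i$). Concretely: apply the theorem to $g$ with targets $B_i$, and to $f$ with targets $D_i\supset\overline{B_i}$. The difficulty is that the island of $f$ must lie inside $V_k$, whereas $V_k$ is an arbitrary far-away domain.

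This is the main obstacle, and I would handle it by a rescaling argument in the spirit of Bergweiler. Because $f$ is transcendental, the five islands theorem holds uniformly: for any sequence of disks $\Delta_k=D(a_k,r_k)$ with $a_k\to\infty$ and $r_k/|a_k|$ bounded below, the rescaled functions $z\mapsto f(a_k+r_kz)$ cannot be uniformly non-normal in a way that avoids islands. So some $\Delta_k$ contains an island $U_k$ of $f$ over some $D_{i(k)}$. I would choose $V_k$ to contain such a disk $\Delta_k$, which requires controlling the size of $V_k$ via Koebe distortion applied to $\psi_k$.

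Next I would match the indices by a pigeonhole step over $i$ and over the finitely many choices of $j$. This yields, for infinitely many $k$, a domain $W_k$ with the following properties: $g$ maps $W_k$ univalently onto $U_k$, $f$ maps $U_k$ univalently onto $D_i$, and $W_k\subset B_i\subset\overline{B_i}\subset D_i$. The composed inverse branch $\phi_k=\psi_k\circ(f|_{U_k})^{-1}$ maps $\overline{D_i}$ into $B_i$, which is compactly contained in $D_i$. Hence, by the first paragraph, it gives a repelling fixed point of $h$ in $B_i$.

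Finally, the fixed points must be distinct. Since the $V_k$ tend to $\infty$ and are pairwise disjoint for different $k$, the points $g(z_k)$ lie in distinct $U_k\subset V_k$. Therefore the fixed points $z_k$ are distinct, giving infinitely many repelling fixed points of $f\circ g$. The step I expect to be hardest is the uniform five-islands statement for $f$ inside the far-out domains $V_k$. If the rescaling argument fails there, I would fall back on Nevanlinna theory: count the fixed points of $h$ via $N(r,1/(h(z)-z))$, and compare this with the count of fixed points whose multiplier has modulus at most $1$, as in Bergweiler's original proof.
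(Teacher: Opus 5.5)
The survey gives no proof of this theorem; it only cites Bergweiler. Your argument therefore has to stand on its own, and as written it does not. The maps you compose never close up into an inverse branch of $h=f\circ g$.

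An inverse branch of $h$ inverts $f$ first and then $g$. You need $f\colon U\to D$ and $g\colon W\to U'\supset U$ conformal with $\overline{W}\subset D$, so that $(g|_W)^{-1}\circ(f|_U)^{-1}$ maps $D$ into $W$. In your set-up the $g$-islands $V_k$ are \emph{preimages} of the fixed bounded domain $B_1$. So $g$ sends the far-out $V_k$ into $B_1$, and nothing sends a subset of $B_i$ onto the far-out $U_k\subset V_k$. This causes three failures:
\begin{itemize}
\item Since $g(\overline{B_i})$ is compact, the claim ``$g$ maps $W_k\subset B_i$ onto $U_k$'' fails for all large $k$.
\item $\phi_k=\psi_k\circ(f|_{U_k})^{-1}$ is not even defined. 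Its inner map takes values in $U_k\subset V_k$, which is disjoint from the domain $B_1$ of $\psi_k$ once $k$ is large.
\item If $U_k\subset V_k$ were an island of $f$ over $D_1\supset\overline{B_1}$, the self-map your data actually produce is $g\circ(f|_{U_k})^{-1}\colon D_1\to B_1$. Its fixed point $w$ satisfies $w=f(u)=g(u)$ with $u=(f|_{U_k})^{-1}(w)$. That is a coincidence point of $f$ and $g$, not a fixed point of $f\circ g$.
\end{itemize}
The conclusion is also self-contradictory. It places infinitely many distinct fixed points $z_k$ in the fixed bounded set $B_i$, whereas $h(z)-z\not\equiv 0$ has only finitely many zeros in $\overline{B_i}$.

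The missing idea is that the absorbing domain must move off to $\infty$. You need domains $D_k\to\infty$ with $\overline{W_k}\subset D_k$, and with $g\colon W_k\to U_k$ and $f\colon U_k\to D_k$ conformal. So $f$ must have islands over \emph{moving} targets, lying inside the huge sets $g(W_k)$, at the matching scale. This is the real content of Bergweiler's theorem. It needs scale-sensitive input, for instance Wiman--Valiron theory near maximum-modulus points of $g$ combined with a quantitative form of Ahlfors' theory, plus a careful choice of radii.

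Your ``uniform five islands'' principle does not supply this, and it is false as stated. For $f=e^z$ and $\Delta_k=D(-k,k/2)$ one has $|f|\le e^{-k/2}$ on $\Delta_k$. Hence $f(-k+\tfrac{k}{2}z)\to 0$ uniformly, and for large $k$ no $\Delta_k$ contains an island over any fixed disk. Koebe distortion cannot force $V_k$ to contain such disks either: for $g=e^z$ the islands $V_k$ are the translates $\operatorname{Log}B_1+2\pi i k$ of one bounded set. The Nevanlinna-theory fallback in your last sentence is a pointer, not an argument.
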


\noindent \large{In 1998, Bergweiler and Wang showed some dynamical properties of compositions of two entire functions taken in opposite orders. We will establish  analogous results for the bungee set, the filled Julia set and the escaping set later on. }
\begin{theorem} \label{Berg_Julia_compo}
\large({\cite{Bergweiler5} Theorem 1)
Let f and g be  entire functions and $z \in \mathbb{C}$. Then $z \in J(f\circ g)$ if and only if $g(z) \in J(g \circ f).$  
}
\end{theorem}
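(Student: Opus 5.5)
The key observation is the semiconjugacy identity $g\circ(f\circ g)=(g\circ f)\circ g$. Iterating it gives, for every $n\ge 1$,
\[
g\circ (f\circ g)^n=(g\circ f)^n\circ g .
\]
Write $F=f\circ g$ and $G=g\circ f$. Both are entire, so $J(F)$ and $J(G)$ are closed, completely invariant and defined through normality of iterates. The plan is to show first that $g(F(F))\subset F(G)$ and then that $g(J(F))\subset J(G)$. Together these give the stated equivalence $z\in J(F)\iff g(z)\in J(G)$, with the direction ``$\Leftarrow$'' being the contrapositive of the first inclusion. If $g$ is constant both sides are degenerate and can be handled directly, so assume $g$ is non-constant.

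\emph{Step 1: $g(F(F))\subset F(G)$.} Let $z_0\in F(F)$ and let $U$ be a small disc around $z_0$ on which $\{F^n\}$ is normal. Since $g$ is non-constant, $g(U)$ is an open neighbourhood of $g(z_0)$. We need normality of $\{G^n\}$ on $g(U)$. On $U$ we have $G^n\circ g=g\circ F^n$. For any subsequence there is a further subsequence with $F^{n_k}\to\phi$ locally uniformly on $U$, where $\phi$ is holomorphic or $\phi\equiv\infty$.
\begin{itemize}
\item If $\phi$ is holomorphic, then $g\circ F^{n_k}\to g\circ\phi$ locally uniformly.
\item If $\phi\equiv\infty$, we cannot conclude directly that $g\circ F^{n_k}$ converges, since $g$ is transcendental.
\end{itemize}
This second case is the \textbf{main obstacle}. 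To avoid it, I would instead use the characterization of $J$ via repelling periodic points together with Montel's theorem.

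\emph{Step 2: $g(J(F))\subset J(G)$ via periodic points.} If $w$ is a repelling periodic point of $F$ of period $p$, then $g(w)$ is periodic for $G$, because $G^p(g(w))=g(F^p(w))=g(w)$. By the chain rule, $(G^p)'(g(w))\,g'(w)=g'(F^p(w))\,(F^p)'(w)=g'(w)(F^p)'(w)$. So if $g'(w)\neq 0$, the point $g(w)$ has the same multiplier and is repelling for $G$. Critical points of $g$ are discrete. Repelling periodic points are dense in $J(F)$ (Baker, as quoted before Theorem \ref{Fixedpt2}), and $J(F)$ is perfect, so repelling cycles of $F$ avoiding critical points of $g$ are still dense in $J(F)$. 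By continuity, $g(J(F))\subset\overline{g(\text{Rep}(F))}\subset J(G)$.

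\emph{Step 3: the converse inclusion.} Suppose $g(z_0)\in J(G)$ but $z_0\in F(F)$, and take a disc $U$ as in Step 1. For the converse I would rule out this situation using blowing-up: $g(U)$ is a neighbourhood of a point of $J(G)$, so $\bigcup_n G^n(g(U))$ covers $\mathbb C$ minus at most one point. Hence $g\bigl(\bigcup_n F^n(U)\bigr)$ omits at most one point. Choose three points $a_1,a_2,a_3$ in $J(F)$ off the exceptional set. By Step 2, $g(J(F))\subset J(G)$; the aim is to derive that $\bigcup_n F^n(U)$ meets $J(F)$, which would contradict $U\subset F(F)$ and complete invariance. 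Concretely, I would pick $w\in J(G)\cap g(U)$ with $w$ a repelling periodic point of $G$. Symmetrically to Step 2, with $f$ playing the role of $g$ since $f\circ G=F\circ f$, the point $f(w)$ is repelling periodic for $F$ provided $f'(w)\ne 0$. Then $g(f(w))=G(w)$ lies in the cycle of $w$. I would try to show that $U$ contains a preimage under some $F^m$ of a point of $J(F)$. Making this precise, namely pulling the repelling cycle of $G$ near $g(z_0)$ back through $g$ to a point of $J(F)$ in $U$, is where I expect most of the work. The tool I would use is the symmetric inclusion $f(J(G))\subset J(F)$ combined with $F\circ f=f\circ G$ and the openness of $f$ and $g$.

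This proposal has a gap: Step 3 is only a sketch. Steps 1 and 2 (the latter being a proof of $g(J(F))\subset J(G)$) give the direction $z\in J(F)\Rightarrow g(z)\in J(G)$. The reverse direction $g(z)\in J(G)\Rightarrow z\in J(F)$ still requires either settling the $\phi\equiv\infty$ case in Step 1 or finishing the pull-back argument in Step 3.
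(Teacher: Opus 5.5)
\textbf{There is a genuine gap: the converse direction is not proved.} Your Step 2 is correct and proves one direction. For a repelling $p$-periodic point $w$ of $F$ with $g'(w)\neq 0$, the identity $(G^p)'(g(w))\,g'(w)=g'(w)\,(F^p)'(w)$ makes $g(w)$ a repelling periodic point of $G$. Since $J(F)$ is perfect and the critical points of $g$ are isolated, such $w$ are dense in $J(F)$, so $g(J(F))\subset J(G)$. (The paper gives no proof of this statement; it only quotes it from \cite{Bergweiler5}.) But, as you acknowledge, the converse $g(z)\in J(G)\Rightarrow z\in J(F)$ is not established, so the proposal is not yet a proof. Step 3 is also circular as sketched. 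From ``$g\bigl(\bigcup_n F^n(U)\bigr)$ meets $J(G)$'' you can conclude ``$\bigcup_n F^n(U)$ meets $J(F)$'' only if you already know $g^{-1}(J(G))\subset J(F)$, which is exactly the missing implication. Step 1 is not needed at all.

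The missing idea is one line, using what you already have. Step 2 is symmetric in $f$ and $g$. From $f\circ G=F\circ f$, repelling cycles of $G$ at which $f'\neq 0$ are dense in $J(G)$ and are mapped by $f$ to repelling cycles of $F$, so $f(J(G))\subset J(F)$. Now if $g(z_0)\in J(G)$, then $F(z_0)=f(g(z_0))\in J(F)$, and backward invariance of $J(F)$ under $F$ gives $z_0\in J(F)$. You do not need to pull a repelling cycle of $G$ back through $g$ into $U$: $z_0$ itself is the required $F$-preimage of a point of $J(F)$.

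Alternatively, your blowing-up observation in Step 3 finishes the argument if you apply $f$ instead of trying to invert $g$. Let $U$ be a disc about $z_0$ contained in the Fatou set $F(F)$, and suppose $g(z_0)\in J(G)$. Then $\bigcup_n F^{n+1}(U)=f\bigl(\bigcup_n G^n(g(U))\bigr)\supset f(\mathbb{C}\setminus E)$, where $E$ has at most one point. By Picard's theorem (trivially if $f$ is a polynomial), this set omits at most one value. Yet it lies in $F(F)$, so it misses the infinite set $J(F)$ (for non-linear $F$), a contradiction. This gives $g(F(F))\subset F(G)$ directly, which is what Step 1 was after, with no case analysis on $\phi\equiv\infty$. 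The other inclusion then follows from the same statement with $f$ and $g$ interchanged, plus complete invariance. This route avoids Baker's density theorem entirely.
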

\noindent\large{ Theorem \ref{Berg_Julia_compo} gives that $z$ is in the Fatou set of $f\circ g$ if and only if $g(z)$ is in the Fatou set of $g\circ f.$ The next result relates components of the composite function.}
\begin{theorem}\label{berg'}
\large({\cite{Bergweiler5}, Theorem 2)
Let $f$ and $g$ be  entire functions. Let $U_0$ be a component of $F(f \circ g)$ and let $V_0$ be the component of $F(g \circ f)$ that contains $g(U_0).$ Then 
\begin{enumerate}
\item[(i)] {$U_0$ is wandering if and only if $V_0$ is wandering. }
\item[(ii)] {If $U_0$ is periodic, then so is $V_0.$ Moreover, $V_0$ is of the same type according to the classification of periodic components as $U_0.$} 
\end{enumerate}
}
\end{theorem}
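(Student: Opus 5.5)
The plan is to use the semiconjugacy $g\circ(f\circ g)=(g\circ f)\circ g$ and the relation $(f\circ g)^{n+1}=f\circ(g\circ f)^n\circ g$. Write $F=f\circ g$ and $G=g\circ f$, so that $g\circ F^n=G^n\circ g$ and $F^{n+1}=f\circ G^n\circ g$. By Theorem \ref{Berg_Julia_compo}, $g(U_0)\subset F(G)$. Since $U_0$ is connected, $g(U_0)$ lies in a single component $V_0$ of $F(G)$, so $V_0$ is well defined. Symmetrically, $f$ maps $V_0$ into a component $U_1$ of $F(F)$. Because $f\circ g=F$, this component is $U_1=$ the component of $F(F)$ containing $F(U_0)$. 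Iterating, $g(U_n)\subset V_n$ and $f(V_n)\subset U_{n+1}$, where $U_n$ and $V_n$ are the components containing $F^n(U_0)$ and $G^n(V_0)$.

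For (i), suppose $V_0$ is not wandering, say $V_m=V_{m+p}$. Then $U_{m+1}\supset f(V_m)$ and $U_{m+p+1}\supset f(V_{m+p})=f(V_m)$. Two Fatou components that both contain the nonempty set $f(V_m)$ must coincide, so $U_0$ is not wandering. The converse is symmetric, with $g$ in place of $f$. Hence $U_0$ is wandering iff $V_0$ is. For the first part of (ii), if $U_0$ has period $p$, then $V_0\supset g(U_0)=g(U_p)$ and also $V_p\supset g(U_p)$. So $V_p=V_0$ and $V_0$ is periodic, with period dividing $p$. Applying the same argument back gives equal minimal periods.

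The main obstacle is matching the types in the classification of Theorem \ref{sec3, thms}. Replace $F,G$ by $F^p,G^p$, which still satisfy $g\circ F^p=G^p\circ g$; then $U_0$ and $V_0$ are invariant, and the limit functions of $F^{n}$ on $U_0$ and of $G^{n}$ on $V_0$ are related by $g$. Then I would go through the cases.

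\emph{Attracting or parabolic.} If $F^n\to w_0$ on $U_0$, then $G^n\circ g=g\circ F^n\to g(w_0)$ on $U_0$. By normality and the identity theorem, $G^n\to g(w_0)$ on all of $V_0$. The point $g(w_0)$ is fixed by $G$, and multipliers agree: $G'(g(w_0))=g'(f(g(w_0)))f'(g(w_0))=F'(w_0)$. So attracting goes to attracting and parabolic goes to parabolic. If $g'(w_0)=0$ there is no degeneracy, because the chain rule is symmetric in this product.

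\emph{Baker domain.} If $F^n\to\infty$ on $U_0$, then $f(G^{n}(g(z)))=F^{n+1}(z)\to\infty$. This forces $G^n\not\to$ any finite constant on $V_0$, since a finite limit $c$ would give $F^{n+1}\to f(c)$. By the classification, $V_0$ is then neither attracting nor parabolic. Running the case analysis symmetrically excludes the Siegel case for $V_0$, so $V_0$ is a Baker domain.

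\emph{Siegel disk.} If $U_0$ is a Siegel disk, then $F^{n_k}\to\mathrm{id}$ along a subsequence. Hence $G^{n_k}\circ g\to g$, so $G^{n_k}\to\mathrm{id}$ on the open set $g(U_0)$. By normality and the identity theorem this holds on $V_0$, making $V_0$ a rotation domain, which for entire functions means a Siegel disk.

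Since the type map is symmetric in $f,g$ and the four cases are exhaustive and exclusive, types coincide. The delicate step is the Baker case, where one must rule out Siegel-type behaviour; the identity-limit argument in the Siegel case, run from $V_0$ back to $U_0$, does this.
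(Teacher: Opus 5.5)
Your proof is correct. The survey gives no proof of this statement; it only cites Bergweiler--Wang, so there is no in-paper argument to compare against. Your route is the standard one and, as far as I recall, essentially the original Bergweiler--Wang argument: the semiconjugacies $g\circ F=G\circ g$ and $f\circ G=F\circ f$ track the component sequences $U_n,V_n$, and limit functions plus the multiplier identity match the types.

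One small tightening, not a gap. After passing to $F^p,G^p$, work with $\tilde f=F^{p-1}\circ f$ instead of $f$. This gives $\tilde f\circ g=F^p$, $g\circ\tilde f=G^p$ and $\tilde f(V_0)\subset U_0$. Then $(G^p)'(g(w_0))=(F^p)'(w_0)$ holds verbatim. The Baker case also becomes immediate: a finite limit $\phi$ of $G^{pn_k}$ on $V_0$ would give $F^{p(n_k+1)}=\tilde f\circ G^{pn_k}\circ g\to\tilde f\circ\phi\circ g$ on $U_0$. So you no longer need the detour through the reversed Siegel case, or the fact that the whole cycle of $U_0$ consists of Baker domains.
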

\noindent\large{ Theorem \ref{berg'} part (1) gives that $f\circ g$ has wandering domain if and only if $g\circ f$ has wandering domain.}

\noindent\large{We now consider an alternate  partition of the complex plane by considering the nature of the orbit of a point. By this we mean that we consider three different sets of points namely, $K(f), I(f)$ and $BU(f)$ defined below. The filled Julia set $K(f)$ is defined as $K(f)=\{ z \in \mathbb{C}:f^n(z)$ is bounded for all $n\in\N \}$ \cite{Osb1}}.  $I(f)$ is the escaping set already defined earlier in this section. The complement of $K(f)\cup I(f)$ is known as the bungee set of $f$ denoted by $BU(f)$ which was first defined by Osborne and Sixmith \cite{Osb}.  It is defined as $BU(f)$=$\{z\in\C: \exists$ at least two subsequences ${f^{m_k}}$ and ${f^{n_k}}$ with $m_k,\ n_k \rightarrow \infty$ as $k \rightarrow \infty$ and a constant $R>0$ such that $|{f^{m_k}(z)}|\leq R$ for $k=1,2,3 \ldots, $ and $|{f^{n_k}(z)}| \rightarrow \infty$ as $\ n_k \rightarrow \infty \}$.}

%

\noindent\large{As $K(f)$ and $I(f)$ are always completely invariant, it follows that $BU(f)$ is also completely invariant. An important result connecting Fatou component and the bungee set was proved in \cite{Osb}.}
\begin{theorem}\label{thm osb}
\large({\cite {Osb}, Theorem 1.1)
Let $g$ be an entire function such that $U\cap BU(g)\not=\emptyset$, where $U\subset F(g)$ is a Fatou component. Then 
\begin{enumerate}
\item [(i)] $U\subset BU(g)$ and U is a wandering domain of $g$;
\item [(ii)] $J(g)=\partial BU(g)$.
\end{enumerate}
} 
\end{theorem}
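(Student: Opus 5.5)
The two assertions need different tools: part (i) is a normal-families argument on $U$, and part (ii) combines (i) with the blowing-up property of $J(g)$.

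\emph{Step 1: all of $U$ lies in $BU(g)$.} Fix $z_0\in U\cap BU(g)$, with subsequences $m_k$, $n_k$ and a constant $R$ as in the definition. Since $\{g^n\}$ is normal on $U$, I pass to a further subsequence of $(g^{m_k})$ converging locally uniformly on $U$. Its limit $\phi$ is finite at $z_0$, because $|g^{m_k}(z_0)|\le R$. A locally uniform limit of holomorphic functions on a domain is either holomorphic or identically $\infty$, so $\phi$ is holomorphic on $U$. Hence for each $z\in U$ the values $g^{m_k}(z)$ stay bounded along this subsequence.

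Similarly, some subsequence of $(g^{n_k})$ converges locally uniformly on $U$. Its limit is $\infty$ at $z_0$, so it is identically $\infty$ on $U$. Thus every $z\in U$ has one bounded subsequence and one subsequence tending to $\infty$, that is, $U\subset BU(g)$.

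\emph{Step 2: $U$ is wandering.} Suppose not. Then $g^p(U)\cap g^q(U)\neq\emptyset$ for some $p<q$, and $U$ is eventually periodic, landing on a periodic component $V$ of some period $m$. Since $BU(g)$ is completely invariant, $V\subset BU(g)$. By the classification in Theorem \ref{sec3, thms}, $g^{nm}$ on $V$ either converges to a finite point (attracting or parabolic domain), is conjugate to an irrational rotation (Siegel disk, so orbits are bounded), or tends to $\infty$ (Baker domain).

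In the first two cases every orbit in $V$ is bounded, because the finitely many iterates $g,\dots,g^{m-1}$ map bounded sets to bounded sets. In a Baker domain every orbit escapes, since $g^{nm+j}=g^j\circ g^{nm}$ and each $g^j$ is bounded on bounded sets. Either way $V\cap BU(g)=\emptyset$, a contradiction.

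The delicate point is the Baker-domain case. To see that $g^{nm+j}\to\infty$ and not merely that $g^{nm}\to\infty$, I would apply the classification to each component $g^j(V)$ of the cycle; each of these is again a Baker domain in which $g^{m}$ tends to $\infty$.

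\emph{Step 3: $J(g)=\partial BU(g)$.} First I show $J(g)\subset\overline{BU(g)}$. Take $w\in J(g)$ and a small neighbourhood $N$ of $w$. By the blowing-up property (via Montel's theorem), $\bigcup_n g^n(N)$ covers $\mathbb{C}$ except at most one point, so it meets $U$. Pulling back and using complete invariance of $BU(g)$, we get $N\cap BU(g)\neq\emptyset$.

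Next, $J(g)$ also lies in the closure of the complement of $BU(g)$. Repelling periodic points are dense in $J(g)$ and lie in $K(g)$, so they are not in $BU(g)$. Hence $J(g)\subset\partial BU(g)$.

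Conversely, let $z\in F(g)$ and let $W$ be its Fatou component. By Step 1 applied to $W$, either $W\subset BU(g)$ or $W\cap BU(g)=\emptyset$. Either way $z$ is an interior point of $BU(g)$ or of its complement, so $z\notin\partial BU(g)$. Therefore $\partial BU(g)\subset J(g)$, which completes the proof.
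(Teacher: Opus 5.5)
Your proof is correct. The survey states this result without proof (it quotes \cite{Osb}), and your argument is essentially the standard one behind it. Normality on a Fatou component forces all of its points to have the same orbit type. The classification of periodic components then rules out preperiodic components. Finally, $J(g)=\partial BU(g)$ follows from complete invariance, Montel's theorem, density of repelling periodic points, and the dichotomy from your Step~1.

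Two minor comments. First, in the Baker-domain case the phrase ``each $g^j$ is bounded on bounded sets'' points the wrong way. Your repair also needs one more line to show that every component of the cycle is again a Baker domain. It is simpler to argue directly. If $z\in V\cap BU(g)$, then infinitely many of the indices $m_k$ for which $g^{m_k}(z)$ is bounded lie in a single residue class $j \bmod m$. Applying $g^{m-j}$ gives a bounded subsequence of $(g^{nm}(z))_n$, which contradicts $g^{nm}\to\infty$ on $V$.

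Second, in Step~3 you use the hypothesised component $U$ only to know that $BU(g)$ is non-empty. In fact $BU(g)\cap J(g)\neq\emptyset$ for every transcendental entire $g$: a point whose orbit is dense in the unbounded perfect set $J(g)$ lies in $BU(g)$, and such points exist by a Baire category argument. This is why (ii) holds unconditionally in \cite{Osb}, and once this fact is supplied your Step~3 proves it in that generality.
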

\noindent\large{The following result is an easy observation that deals with the pre-images of the bungee sets corresponding to commuting entire functions.}
\begin{lemma}
\large({\cite{AP}, Theorem 2.4)
Let $f$ and $g$ be permutable entire functions. Let $U \subset BU(f).$ If $g^{-1}(U)\neq \emptyset$, then $g^{-1}(U)\cap(I(f)\cup BU(f)) \neq \emptyset$.
}
\end{lemma}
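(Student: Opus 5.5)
We argue by contradiction, using only the definitions of $K(f)$, $I(f)$, $BU(f)$ and the relation $f\circ g=g\circ f$. The three sets $K(f)$, $I(f)$ and $BU(f)$ partition $\C$. Since $g^{-1}(U)\neq\emptyset$, it suffices to rule out $g^{-1}(U)\subset K(f)$. So suppose every $w\in g^{-1}(U)$ has a bounded $f$-orbit, and fix one such $w$, so that $g(w)=z\in U\subset BU(f)$.

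The key step is to transport the orbit of $w$ under $f$ to the orbit of $z$. Permutability gives $f^n\circ g=g\circ f^n$ for every $n$; this follows by a one-line induction from $f\circ g=g\circ f$, in the same spirit as the identity for $f\circ g^n\circ f^{n-1}$ noted in the introduction. Hence
\[
f^n(z)=f^n(g(w))=g(f^n(w)) \qquad (n\in\N).
\]
Since $w\in K(f)$, the points $f^n(w)$ lie in some closed disc $\overline{D}(0,R_0)$. Because $g$ is entire, it is continuous, so $|g|\le M(R_0,g)<\ity$ on that disc. Therefore $|f^n(z)|\le M(R_0,g)$ for all $n$, that is, $z\in K(f)$.

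This contradicts $z\in BU(f)$, since $BU(f)$ is disjoint from $K(f)$ by definition. Hence $w\notin K(f)$, so $w\in I(f)\cup BU(f)$, and therefore $g^{-1}(U)\cap\bigl(I(f)\cup BU(f)\bigr)\neq\emptyset$.

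No step is a genuine obstacle. The only points needing care are stating the commutation $f^n\circ g=g\circ f^n$ explicitly and noting that the partition $\C=K(f)\sqcup I(f)\sqcup BU(f)$ is exactly what converts ``not in $K(f)$'' into membership of $I(f)\cup BU(f)$. In fact the argument proves more than is stated: every point of $g^{-1}(U)$ lies in $I(f)\cup BU(f)$, not just one of them.
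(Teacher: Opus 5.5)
Your proof is correct and is essentially the intended argument. The paper gives no proof beyond calling the lemma an easy observation, and that observation is exactly your step $g(K(f))\subset K(f)$, which follows from $f^n\circ g=g\circ f^n$ and the bound $|g|\le M(R_0,g)$ on a disc; this is the same maximum-modulus device used in the proof of Theorem~\ref{DR1}. As you note, the argument in fact gives the stronger inclusion $g^{-1}(BU(f))\subset I(f)\cup BU(f)$.
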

Note that above result always  holds if $|U|\geq 2.$

\noindent \large{In  \cite{dk}, the relation between escaping sets of individual functions, their compositions was provided. So far we have observed that the Julia set of two permutable entire functions may not always coincide. However, the following is always true for two permutable entire functions.}

\begin{theorem} \label{Fatou_set_composition}
\large({\cite{dk} Lemma 3.2)
Let $f$ and $g$ be permutable  entire functions. Then $F(f \circ g) \subset F(f) \cap F(g)$.
}
\end{theorem}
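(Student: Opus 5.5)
We show that $F(f\circ g)\subset F(f)$; the inclusion $F(f\circ g)\subset F(g)$ then follows by symmetry, since $f\circ g=g\circ f$. Let $h=f\circ g$. Because $f$ and $g$ commute, $h^n=f^n\circ g^n$ for every $n$, and $h$ commutes with both $f$ and $g$. Fix $z_0\in F(h)$ and a disc $D$ about $z_0$ with $D\subset F(h)$. We must show that $\{f^n\}$ is normal on $D$.

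The natural route is Montel's theorem, as recalled earlier in the paper. We would show that every $f^n$ omits, on a suitable smaller disc, two fixed values. The cleanest source of omitted values is the density of repelling periodic points in Julia sets, together with the relation $h\circ f=f\circ h$. This relation gives the following. If $w\in D$ and $f^n(w)$ lies in $J(h)$, then, since $J(h)$ is completely invariant under $h$, the commutation shows that $f$ maps $F(h)$ into $F(h)$. Indeed, $h^k\circ f=f\circ h^k$. If $h^{k_j}\to\phi$ locally uniformly on $D$, then $h^{k_j}\circ f=f\circ h^{k_j}\to f\circ\phi$ locally uniformly on $D$, with the convention $f(\infty)=\infty$ handled via the standard argument that $f\circ h^{k_j}\to\infty$ whenever $h^{k_j}\to\infty$. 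So $\{h^k\}$ is normal on $f(D)$, and hence $f(F(h))\subset F(h)$. Iterating, $f^n(D)\subset F(h)$ for all $n$, so no $f^n$ takes values in $J(h)$ on $D$.

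Since $h$ is transcendental (or a polynomial of degree at least two; the linear cases are trivial to exclude separately), $J(h)$ is infinite. Indeed, by Theorem \ref{Fixedpt2}, $h=f\circ g$ has infinitely many repelling fixed points, and these lie in $J(h)$. Choose two distinct points $a,b\in J(h)$. Every $f^n$ omits $a$ and $b$ on $D$, because $f^n(D)\subset F(h)$. By Montel's theorem $\{f^n\}$ is normal on $D$, so $z_0\in F(f)$. Applying the same argument to $g$, which also commutes with $h$, gives $z_0\in F(g)$.

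The main obstacle is the step $f(F(h))\subset F(h)$. Passing to the limit needs care when $h^{k_j}\to\infty$ on $D$, because then $f\circ h^{k_j}$ need not converge. The way around this is to avoid the limit altogether: apply Montel directly on $f(D)$. Write $h^k\circ f=f\circ h^k$ and note that $h^k$ restricted to $D$ omits two values $c,d\in J(h)$, since $h^k(D)\subset F(h)$. Then $h^k$ omits $c,d$ on the open set $f(D)$ whenever $f(D)\cap J(h)=\emptyset$. This requires a genuine argument. The argument we would use is that $f(D)$ meeting $J(h)$ would, by the blowing-up property of $J(h)$ (repelling periodic points are dense, so $\bigcup_k h^k$ of a neighbourhood covers $\mathbb{C}$ minus at most one point), force $f\circ h^k$ on $D$ to take almost all values. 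That contradicts $h^k(D)\subset F(h)$ only after a further composition with $f$, namely via the normal family $\{f\circ h^k\}$ on $D$. We expect this bookkeeping, essentially the classical proof that commuting maps send Fatou sets into Fatou sets in the spirit of Baker, to be the technical heart of the proof.
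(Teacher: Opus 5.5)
Your outline is the same as the paper's: show that $F(h)$ is forward invariant under $f$, iterate, then apply Montel with two omitted points of $J(h)$. The final Montel step is fine. The gap is that the key step $f(F(h))\subset F(h)$ is never proved.

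\begin{itemize}
\item Your first attempt fails, as you note yourself. For transcendental $f$ there is no ``$f(\infty)=\infty$'', so $h^{k_j}\to\infty$ on $D$ gives no control over $f\circ h^{k_j}$.
\item Your repair is circular. The claim that $h^k$ omits $c,d$ on $f(D)$ ``whenever $f(D)\cap J(h)=\emptyset$'' assumes the conclusion. Normality of $\{f\circ h^k\}=\{h^k\circ f\}$ on $D$ is equivalent, since $f$ is open, to normality of $\{h^k\}$ on $f(D)$, i.e.\ to $f(D)\subset F(h)$.
\item More seriously, the step cannot come from the relation $f\circ h=h\circ f$ alone. There is no classical theorem that commuting maps send Fatou sets into Fatou sets. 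If that held for every permutable pair, Montel would give $F(h)\subset F(f)$, and by symmetry $F(f)=F(h)$ for all permutable pairs. That is an affirmative answer to Baker's Problem A, which the paper records only under extra hypotheses, for instance no simply connected fast escaping wandering domains (Benini--Rippon--Stallard).
\end{itemize}

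The missing idea is to use that $f$ is a compositional factor of $h=f\circ g=g\circ f$. Then $g\circ h^n\circ f=g\circ f\circ h^n=h^{n+1}$. For an open set $U\subset F(h)$ this gives $h^n(f(U))\subset g^{-1}(h^{n+1}(U))\subset g^{-1}(F(h))$. So every $h^n$ omits, on the open set $f(U)$, the set $g^{-1}(J(h))$. This set contains at least two points: $J(h)$ is infinite, and $g$ omits at most one value by Picard's theorem. Montel's theorem then gives $f(U)\subset F(h)$. This is exactly the content of the Bergweiler--Wang result (Theorem~\ref{Berg_Julia_compo}) that the paper cites. Combined with $f\circ g=g\circ f$, it yields $z\in F(h)$ if and only if $f(z)\in F(h)$, and the rest of your argument then goes through unchanged.
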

We reproduce the proof from \cite{dk}.
\begin{proof}
In Theorem \ref{Berg_Julia_compo}, it was shown that $z\in F(f\circ g)$ if and only if $f(z)\in F(g\circ f).$ As $f\circ g=g\circ f,\, F(f\circ g)$ is completely invariant under $f$ and by symmetry, under $g$ respectively and so, in particular, it is forward invariant under them. So $f(F(f\circ g))\subset F(f\circ g)$ and $g(F(f\circ g))\subset F(f\circ g),$  which by Montel's Normality Criterion implies $F(f\circ g)\subset F(f)$ and $F(f\circ g)\subset F(g)$ and hence the result. 
\end{proof}
\noindent \large{As Julia set is the complement of Fatou set, using Theorem \ref{Fatou_set_composition} and  on taking complement of it, we obtain the result $J(f \circ g) \supset J(f) \cup J(g).$ 
Recall that for a function $f$ in the Eremenko-Lyubich class $\mathfrak B, I(f)\subset J(f)$ and $ J(f)=\overline{I(f)}$ \cite{EL}. As a result, we obtain for permutable $f$ and $g$  in the Eremenko-Lyubich class $\mathfrak B, \overline{I(f)} \cup \overline{I(g)} \subset \overline{I(f \circ g)}.$}


\begin{theorem} \label{Compo_I}
\large({\cite{dk} Theorem 3.4, Theorem 3.5)
Let $f$ and $g$ be permutable  entire functions. Then,
\begin{enumerate}
\item {$g^{-1}(I(f)) \subset I(f).$}
\item{$I(f \circ g)$ is completely invariant under $f$ and $g$ respectively.}
\item{$I(f \circ g) \subset I(f) \cup I(g)$}
\item{ For $i\neq j \in \mathbb{N}, \ I(f^{i}\circ g^j)=I(f \circ g)$}.

\end{enumerate} 
}
\end{theorem}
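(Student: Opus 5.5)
The plan is to prove the four assertions in the order stated. Each uses only the identity $f\circ g=g\circ f$ and the following elementary continuity principle: if an entire function $\phi$ maps a sequence $w_n$ to a sequence $\phi(w_n)\to\infty$, then $w_n$ has no bounded subsequence, so $w_n\to\infty$. (Indeed, a bounded subsequence $w_{n_k}$ lies in a compact disc, where $\phi$ is bounded, which is impossible.)

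For (1), I take $z$ with $g(z)\in I(f)$. Then $g(f^n(z))=f^n(g(z))\to\infty$, and the principle applied with $\phi=g$ and $w_n=f^n(z)$ gives $f^n(z)\to\infty$, that is, $z\in I(f)$. For (2), I write $h=f\circ g$; since $f$ and $g$ commute with each other, both commute with $h$.
\begin{itemize}
\item Backward invariance: if $f(z)\in I(h)$, then $f(h^n(z))=h^n(f(z))\to\infty$, so $h^n(z)\to\infty$ by the principle.
\item Forward invariance: if $z\in I(h)$, then $h^{n+1}(z)=g\bigl(h^n(f(z))\bigr)$, because $h^{n+1}=g\circ h^n\circ f$ by commutativity. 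The left side tends to $\infty$, so the principle with $\phi=g$ gives $h^n(f(z))\to\infty$, that is, $f(z)\in I(h)$.
\end{itemize}
The same two arguments with the roles of $f$ and $g$ swapped give invariance under $g$. This is the escaping-set analogue of Theorem \ref{Berg_Julia_compo}.

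For (3), I would argue by contradiction. Suppose $z\in I(f\circ g)$ but $z\notin I(f)\cup I(g)$, and use $(f\circ g)^n=f^n\circ g^n$. By (2), every point $f^k(z)$ and $g^l(z)$ lies in $I(f\circ g)$. Choosing subsequences with $f^{n_k}(z)$ and $g^{m_l}(z)$ bounded, the aim is to find indices for which some iterate $(f\circ g)^N(z)=f^{N}(g^{N}(z))$ stays bounded along a subsequence. This is \emph{the step I expect to be the main obstacle}. The bounded returns of $f$ and $g$ happen at different times, and the principle only lets one strip off an outer map, not synchronise two inner ones. What I would try first is a diagonal argument: pass to a limit point $w$ of $g^{m_l}(z)$, and use (1) in the form $g^{-m}(I(f))\subset I(f)$ together with continuity of $f^n$ on a neighbourhood of $w$. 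The idea is to transfer the boundedness of $f^{n_k}$ at $z$ to boundedness of $f^{n_k}\circ g^{n_k}$ at $z$. If the hypotheses as stated do not suffice, this is the point where I would re-examine whether extra assumptions are used in \cite{dk}.

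For (4), I put $h=f\circ g$ and assume without loss of generality $i<j$, so that $f^i\circ g^j=h^i\circ g^{j-i}$. By (2), $I(h)$ is completely invariant under $g$ and under $h$, hence under $f^i\circ g^j$. The plan is to compare orbits directly: $(f^i\circ g^j)^n=h^{in}\circ g^{(j-i)n}$. One inclusion would follow from complete invariance of $I(h)$ under $g$ combined with the principle applied to the outer factor $h^{in}$. The reverse inclusion would use the symmetric statement, namely that $I(f^i\circ g^j)$ is completely invariant under $f$ and $g$ (proved exactly as in (2)), applied to the factorisation $h^{n}$ expressed through powers of $f^i\circ g^j$. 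I expect this to reduce to arguments of the same type as in (3).
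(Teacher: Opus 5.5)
Your proofs of (1) and (2) are correct and match the paper's. The paper shows $z\in I(f\circ g)\iff g(z)\in I(g\circ f)$ by the same strip-off-an-outer-map argument, then uses $f\circ g=g\circ f$.

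The genuine gap is (3): you give no argument, and the diagonal idea you sketch would not close it. The indices $n_k$ at which $f^{n}(z)$ is bounded and the indices $m_l$ at which $g^{m}(z)$ is bounded are unrelated, whereas you need a bound on $f^{N}(g^{N}(z))$ at a common index $N$. Moreover, boundedness of $f^{n_k}$ is known only at $z$, and continuity of each $f^{n}$ separately cannot transfer bounds to the points $g^{n_k}(z)$ uniformly in $k$. That would require equicontinuity of $\{f^n\}$ near the limit point $w$, which fails when $w\in J(f)$.

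Be aware that the paper's proof of (3) does not supply the missing idea either. It asserts that every row $f^n(g^k(z))$ is bounded, although from $z\notin I(f)$ and (1) one only gets a bounded subsequence. It then assumes $f^{n}(g^k(z))\to g^{k-1}(l)$ for each fixed $k$ and concludes that the diagonal $f^n(g^n(z))$ stays bounded (for instance $\lim_{n\to\infty}f^n(g^n(z))=l$ in Case 1). That interchanges the limits in $n$ and $k$ with no uniformity, which is exactly the synchronisation problem you flagged.

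For (4), both of your sketched inclusions fail. Your principle strips off a \emph{fixed} outer map, but $h^{in}$ varies with $n$, so knowing $g^{(j-i)n}(z)\in I(h)$ for each $n$ says nothing about $h^{in}(g^{(j-i)n}(z))$. Also, $h^{n}$ cannot be expressed through powers of $f^i\circ g^j$ when $i\neq j$.

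In fact no argument can work, because the stated equality is false. Take $f(z)=z+\sin z$ and $g=f+4\pi$. Both are transcendental, and they commute because $f(z+2\pi)=f(z)+2\pi$. By induction, $(f\circ g)^n=f^{2n}+4n\pi$ and $(f\circ g^2)^n=f^{3n}+8n\pi$. Choose $w=-\tfrac{\pi}{2}+iy$ with $\cosh y=2\pi$, so that $\sin w=-2\pi$. Then $f^k(w)=w-2k\pi$ for all $k$, so $(f\circ g)^n(w)=w$ for every $n$, while $(f\circ g^2)^n(w)=w+2n\pi\to\infty$. Hence $w\in I(f\circ g^2)\setminus I(f\circ g)$.

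The paper's own argument for (4) only derives $I(f^i\circ g^j)\subset I(f)\cup I(g)$. That claim rests on (3) and is not the stated equality. What does hold in general is the case $i=j$, namely $I((f\circ g)^j)=I(f\circ g)$, which follows directly from your principle.
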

We reproduce the proof for completeness.

\begin{proof}
\begin{enumerate}
\item {This proof is straight forward.}
\item {We first show that $z\in I(f\circ g)$ if and only if $g(z)\in I(g\circ f).$ Let $z\in I(f\circ g).$ Then $(f\circ g)^n(z)\to\ity$ as $n\to\ity,$ that is,
$f((g\circ f)^{n-1}g(z))\to\ity$ as $n\to\ity.$ As $f$ is an entire function, this implies that $(g\circ f)^{n-1}g(z)\to\ity$ as $n\to\ity,$ that is, $g(z)\in I(g\circ f).$ On the other hand, let $g(z)\in I(g\circ f).$ Then $(g\circ f)^{n}(g(z))\to\ity$ as $n\to\ity,$ that is, $g((f\circ g)^n(z))\to\ity$ as $n\to\ity.$ Again, as $g$ is entire, this forces $(f\circ g)^n(z)\to\ity$ as $n\to\ity.$  So, $z\in I(f\circ g)$ which proves the claim. As $f\circ g=g\circ f,$ we obtain $z\in I(f\circ g)$ if and only if $g(z)\in I(f\circ g)$ which implies $I(f\circ g)$ is completely invariant under $g,$ and by symmetry, under $f$ respectively.}
\item {Suppose that \(z\not\in I(f)\) and \(z\not \in I(g)\). We shall prove that \(z\not \in I(f\circ g)\). \\Consider the following infinite sequences of rows where each row consists of orbit of $\{g^k(z): k\in\N\}$ under $f:$

\[\begin{array}{ll}
\vspace{3mm}
 f(g(z)) , f^2(g(z)) ,f^3(g(z)),\ldots\\
\vspace{3mm} 
f(g^2(z)) , f^2(g^2(z)) ,f^3(g^2(z)),\ldots\\
\vspace{3mm}
 f(g^3(z)) , f^2(g^3(z)) ,f^3(g^3(z)),\ldots\\
\vspace{3mm}
  \quad \vdots \quad\quad\quad \quad \quad \vdots\quad \quad \quad \quad  \quad \vdots\\
f(g^k(z)) ,f^2(g^k(z)) ,f^3(g^k(z)),\ldots\\
  \quad \vdots \quad\quad\quad \quad \quad \vdots\quad \quad \quad \quad  \quad \vdots
     \end{array}\]


 Since \(z\not\in I(f)\) and \(z\not \in I(g)\), therefore each of the above sequence is bounded and hence has a limit point. We can assume that each of the above sequence is actually convergent. Let \(l\) be the limit  of the sequence corresponding to first row, i.e., \(f^n(g(z))\to l\) as \(n\to \infty\) which gives \( f^n(g^2(z)))\to g(l)\) as \(n\to \infty\). Proceeding on similar lines, we have  \(f^n(g^k(z)))\to g^{k-1}(l)\) as \(n\to \infty.\) Let 
\[X=\{l,g(l),g^2(l)\ldots\}.\]
 We have the following cases to consider.\\ 
Case 1: \(l= g(l)\) i.e., \(l\) is a fixed point of \(g\), then \(g^k(l)=l , \forall\,  k \in\N.\)   In particular, we can extract a subsequence \(\{f^n(g^n(z)), n\in \N\}\) which also converges to \(l\),  i.e., \(\lim\limits_{n\to \infty}f^n(g^n(z))=l(\neq \infty)\) from which we see that \( z\not \in I(f\circ g)\).
\\Case 2: \(g^m(l)=g^{m+j}(l)\) for some  \(m\in \mathbf{N}\) and $j\geq 1$ i.e., \(g^m(l)\) is a fixed point of \(g\), which in turn implies
\begin{equation}\notag
\begin{split}
 \lim\limits_{n\to \infty}f^n(g^{m+j}(z))
&=g^{m+j}(l)\\
&=g^m(l), j\geq 1.
\end{split}
\end{equation}
This gives  \( \lim\limits_{n\to\infty}f^n(g^k(z))=g^m(l), \forall\, k\geq m\). Therefore, \(\lim\limits_{n\to \infty}f^n(g^n(z))=g^m(l)(\neq \infty)\) from which we deduce that  \(z\not \in I(f\circ g)\).\\ 
Case 3: When \(X\) is an infinite set then it  has a limit point \(\alpha\, \mathit{(say)}\)  
     i.e., \(g^{k-1}(l)\to \alpha\) as \(k\to \infty\). We consider the following two subcases:\\
     Subcase 1: \(\alpha\) is finite.\\
     As we have \(f^n(g(z))\to l\) as \(n\to \infty\), so by continuity of \(g\) we have
     \(\lim\limits_{n\to \infty}f^n(g^k(z))= g^{k-1}(l)\). This implies that  \( \lim\limits_{k\to \infty}\lim\limits_{n\to \infty}f^n(g^k(z))= \alpha\). Thus, every subsequence  will also converge to \(\alpha\), i.e., \(\lim\limits_{n\to \infty}f^n(g^n(z))=\alpha\) so that \( z\not\in I(f\circ g).\)\\
     Subcase 2: \(\alpha\) is infinity.\\
     By using  similar arguments, we have \(\lim\limits_{k\to\infty}\lim\limits_{n\to \infty}f^n(g^k(z))=\infty\). In particular, \(\lim\limits_{n\to\infty}f^n(g^k(z))=\infty\), i.e. \(g^k(z)\in I(f)\) which implies that \(z\in I(f) \), which is a contradiction to our supposition. 
This completes the proof of the result.}

\item {For $i\neq j\in\N,$ assume $i\geq j.$ Now using previous result we have
\begin{equation}\notag
\begin{split}
 I(f^i\circ g^j)
&=I(f^{i-j}\circ {f^j\circ g^j})\\
&\subset I(f^{i-j})\cup I(f^j\circ g^j)\\
&=I(f)\cup I(f\circ g)\\
&\subset I(f)\cup I(g).\qedhere
\end{split}
\end{equation}}

\end{enumerate}
\end{proof}
\noindent \large{Recently,  some more results on the escaping set, the bungee set and the filled Julia set  were obtained in \cite{D_R}. For completeness, we reproduce the proofs of some of these results.}
%
%
%
%


\begin{theorem}\label{DR1}
\large({ \cite{D_R} Theorem 2.3})
Suppose $f$, $g$ are two  entire functions. Then
\begin{enumerate}
\item  $z_0 \in BU(f \circ g)$ if and only if  $g(z_0) \in BU(g \circ f);$ 
\item $z_0 \in K(f \circ g)$ if and only  if $g(z_0) \in K(g \circ f);$
\item $z_0 \in I(f \circ g)$ if and only  if $g(z_0) \in I(g \circ f).$
\end{enumerate}
\end{theorem}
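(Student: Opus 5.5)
The whole argument rests on the conjugacy identity
\[
g\circ (f\circ g)^n = (g\circ f)^n\circ g, \qquad n\geq 0,
\]
which I will verify by a trivial induction. With it, the orbit of $g(z_0)$ under $g\circ f$ is exactly the image under $g$ of the orbit of $z_0$ under $f\circ g$. Write $w_n=(f\circ g)^n(z_0)$. Then $(g\circ f)^n(g(z_0))=g(w_n)$, and conversely $w_{n+1}=f\big((g\circ f)^n(g(z_0))\big)$. Each part of Theorem \ref{DR1} will follow by transferring boundedness or unboundedness of subsequences through the continuous maps $g$ and $f$.

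\textbf{Part (3)} is exactly the claim proved in item (2) of the proof of Theorem \ref{Compo_I}, and I would reuse that argument. If $w_n\to\infty$ but $g(w_n)$ did not tend to $\infty$, some subsequence $g(w_{n_k})$ would be bounded. Then $f(g(w_{n_k}))=w_{n_k+1}$ would be bounded by continuity of $f$ on a compact set, which is a contradiction. For the converse, if $g(w_n)\to\infty$, then $w_n$ cannot have a bounded subsequence, since $g$ maps bounded sets to bounded sets.

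\textbf{Part (2)} is the same transfer applied to boundedness. If $\{w_n\}$ is bounded, then $\{g(w_n)\}$ is bounded. If $\{g(w_n)\}$ is bounded, then $\{w_{n+1}\}=\{f(g(w_n))\}$ is bounded, and adding $w_0=z_0$ keeps the set bounded.

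\textbf{Part (1)} I would deduce from (2) and (3) rather than argue directly. For any entire $h$, the sets $K(h)$, $I(h)$ and $BU(h)$ partition $\mathbb{C}$, since $BU(h)$ is by definition the complement of $K(h)\cup I(h)$. One can also check this against the subsequence definition: an orbit that is neither bounded nor escaping has both a bounded subsequence and a subsequence tending to $\infty$. Hence $z_0\in BU(f\circ g)$ if and only if $z_0\notin K(f\circ g)\cup I(f\circ g)$, and by (2) and (3) this holds if and only if $g(z_0)\notin K(g\circ f)\cup I(g\circ f)$, that is, $g(z_0)\in BU(g\circ f)$.

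The only point requiring care is the step in (3) going from $w_n\to\infty$ to $g(w_n)\to\infty$. An entire $g$ need not send large points to large points, so this direction must go through $f$ via the relation $w_{n+1}=f(g(w_n))$, not through $g$ directly. Once that is set up correctly, everything else is routine.
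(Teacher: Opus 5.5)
Your proof is correct and is essentially the paper's argument. The paper proves (1) directly by contradiction, ruling out $g(z_0)\in I(g\circ f)$ and $g(z_0)\in K(g\circ f)$ (and conversely for $z_0$), and says (2) and (3) follow on similar lines. It does this with exactly your transfers: boundedness of $g$ on bounded sets applied to $g\circ(f\circ g)^n=(g\circ f)^n\circ g$, and of $f$ applied to $w_{n+1}=f(g(w_n))$. Proving (2) and (3) first and deducing (1) from the partition $\mathbb{C}=K(h)\cup I(h)\cup BU(h)$ is just a cleaner arrangement of the same four implications.
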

\begin{proof}
It is enough to provide the proof of the first part of the theorem as the proofs of the other two parts follows on the similar lines.\\
We first assume that $z_0 \in BU(f \circ g)$ and then show that $g(z_0) \in BU(g \circ f).$ On the contrary, assume $g(z_0)\in I(g\circ f)\cup K(g\circ f).$ If $g(z_0)\in I(g\circ f),$ then  $(g\circ f)^{n}(g(z_0))\to\infty$ as $n\to\infty.$ This implies that $(g(f\circ g)^{n}(z_0))\to\infty$ as $n\to\infty$. As $g$ is entire we obtain $z_0\in I(f\circ g)$ which is a contradiction. On the other hand, if  $g(z_0)\in K(g\circ f)$, then there exists a constant $C>0$ such that $|(g\circ f)^n(g(z_0))|\leq C \mbox{ for all } n\in\N$. By applying $f$ we obtain, $|(f\circ g)^{n+1} (z_0)|\leq M(C, f) \mbox{ for all } n\in\N$, which implies that $z_0\in K(f\circ g)$ which is again a contradiction. Hence, we conclude that $g(z_0) \in BU(g \circ f).$\\
Conversely, assume that $g(z_0) \in BU(g \circ f)$ and then show that $z_0 \in BU(f \circ g).$ 
 Suppose on the contrary, $z_0\in I(f\circ g)\cup K(f\circ g).$ 
 If $z_0\in I(f\circ g) $, then $(f\circ g)^{n}(z_0)\to\infty$ as $n\to\infty.$ This implies that $(f(g\circ f)^{n-1}(g(z_0)))\to\infty$ as $n\to\infty$. As $f$ is entire, we obtain $g(z_0)\in I(g\circ f)$ which is a contradiction. On the other hand, if  $z_0\in K(f\circ g)$, then there exists a constant $B>0$ such that $|(f\circ g)^n(z_0)|\leq B \mbox{ for all } n\in\N$. This further implies that $|(g\circ f)^n (g(z_0))|\leq M(B, g) \mbox{ for all } n\in\N$, which implies that $g(z_0)\in K(g\circ f)$ which is again a contradiction. On combining both the observations, we conclude that  $z_0\in BU(f\circ g)$.
\end{proof}
\noindent\large{This is an analogous result to Theorem \eqref{Berg_Julia_compo} proved by Bergweiler and Wang in 1998.
\noindent\large{We finally conclude the following from above result:
\begin{theorem}\label{DR4}
For two  entire functions $f$ and $g,$ the following holds:
\begin{enumerate}
\item $g(BU(f\circ g)) = BU(g\circ f);$\\
\item $g(K(f\circ g)) = K(g\circ f);$\\
\item $g(I(f\circ g)) = I(g\circ f).$
\end{enumerate}
\end{theorem}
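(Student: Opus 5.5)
The plan is to obtain each of the three identities from Theorem \ref{DR1} by proving the two inclusions separately. The argument is the same for $BU$, $K$ and $I$, so I write $X$ for any one of them.

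\emph{Forward inclusion} $g(X(f\circ g))\subset X(g\circ f)$. Take $w=g(z_0)$ with $z_0\in X(f\circ g)$. The ``only if'' direction of the corresponding part of Theorem \ref{DR1} gives $g(z_0)\in X(g\circ f)$ directly, so nothing more is needed here.

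\emph{Reverse inclusion} $X(g\circ f)\subset g(X(f\circ g))$. Take $w\in X(g\circ f)$. If $w$ lies in the range of $g$, choose any $z_0$ with $g(z_0)=w$. The ``if'' direction of Theorem \ref{DR1}, applied to $g(z_0)=w\in X(g\circ f)$, gives $z_0\in X(f\circ g)$, and hence $w=g(z_0)\in g(X(f\circ g))$. This covers all of $X(g\circ f)$ when $g$ is surjective, in particular when $g$ is a non-constant polynomial. So for surjective $g$ the three identities follow at once from Theorem \ref{DR1}.

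The main obstacle is a value that $g$ omits. By Picard's theorem a transcendental $g$ omits at most one value $w_0$. If such a $w_0$ lies in $X(g\circ f)$, then $w_0$ has no preimage at all, so the reverse inclusion cannot hold at $w_0$. This genuinely happens. With $f(z)=z$ and $g(z)=e^z$ we have $g\circ f=f\circ g=e^z$, and $0\in I(e^z)$ because its orbit $0\mapsto1\mapsto e\mapsto\cdots$ escapes. Yet $0\notin g(I(e^z))$, since $e^z$ never takes the value $0$.

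The statement therefore cannot hold verbatim for all entire $g$. I would prove it in one of two corrected forms, either of which follows from the argument above:
\[
X(g\circ f)\setminus\{w_0\}\subset g(X(f\circ g))\subset X(g\circ f),
\]
where $w_0$ is the (at most one) Picard exceptional value of $g$; or equality under the extra hypothesis that $g$ omits no value of $X(g\circ f)$, for instance when $g$ is surjective. Complete invariance of $X(g\circ f)$ under $g\circ f$ does not remove the exceptional point, as the example shows. In that case I would at most add the remark that $w_0\in X(g\circ f)$ if and only if $(g\circ f)(w_0)\in X(g\circ f)$, and $(g\circ f)(w_0)$ does lie in the range of $g$, so it is covered by the argument above.
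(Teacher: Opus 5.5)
Your reduction to Theorem \ref{DR1} is the same deduction the paper intends; the paper gives no argument beyond ``we finally conclude the following from above result''. Your objection to the statement is correct.

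The pointwise equivalences of Theorem \ref{DR1} give only $g(X(f\circ g)) = X(g\circ f)\cap g(\C)$. So the claimed equality fails exactly when the Picard exceptional value of $g$ lies in $X(g\circ f)$, and the paper's one-line deduction silently assumes $g$ is surjective.

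One refinement: Section 3 declares that all functions considered there are transcendental, so your example with $f(z)=z$ could be set aside on those grounds. The failure persists within the transcendental class, though:
\begin{itemize}
\item Take $f=g=e^z$, which even commute. The orbit $0\mapsto e\mapsto e^{e^e}\mapsto\cdots$ escapes under $e^{e^z}$, while $e^z\neq 0$. Hence $0\in I(g\circ f)\setminus g(I(f\circ g))$.
\item Take $f(z)=\sin \pi z$ and $g(z)=e^z$. Then $0\mapsto 1\mapsto 1$ under $g\circ f$, so $0\in K(g\circ f)\setminus g(K(f\circ g))$.
\end{itemize}

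Your corrected form, $X(g\circ f)\setminus\{w_0\}\subset g(X(f\circ g))\subset X(g\circ f)$, is the right statement, with equality whenever $g$ is surjective. The Corollary that follows in the paper is unaffected. Complete invariance only requires $z\in X\iff g(z)\in X$, and this is Theorem \ref{DR1} itself once $f\circ g=g\circ f$.
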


As a special case, we obtain the following result.
\begin{corollary}
Suppose $f$ and $g$ are two permutable  entire functions. Then $BU(f\circ g), K(f\circ g)$ and $I(f\circ g)$ are completely invariant under both $f$ and $g.$
\end{corollary}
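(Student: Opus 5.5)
The plan is to derive the corollary directly from Theorem \ref{DR1}, using the hypothesis $f\circ g=g\circ f$. Writing $h=f\circ g=g\circ f$ and setting $X(h)$ to be any one of $BU(h)$, $K(h)$, $I(h)$, part (1), (2) or (3) of Theorem \ref{DR1} reads: $z\in X(f\circ g)$ if and only if $g(z)\in X(g\circ f)$. Since the two compositions coincide, this becomes
\[
z\in X(h)\iff g(z)\in X(h),
\]
which is precisely $g^{-1}(X(h))=X(h)$.

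From this equivalence we read off both directions of complete invariance.
\begin{itemize}
\item Backward invariance, $g^{-1}(X(h))\subset X(h)$, is the implication ``$\Leftarrow$''.
\item Forward invariance, $g(X(h))\subset X(h)$, is the implication ``$\Rightarrow$''.
\end{itemize}
If one additionally wants the equality $g(X(h))=X(h)$, it follows from Theorem \ref{DR4} applied with $f\circ g=g\circ f$. Alternatively, complete invariance in the usual sense $z\in X\iff g(z)\in X$ is all that is needed, and that is already established above.

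For invariance under $f$, interchange the roles of $f$ and $g$ in Theorem \ref{DR1}, which holds for arbitrary entire functions. This gives $z\in X(g\circ f)\iff f(z)\in X(f\circ g)$, and since both compositions equal $h$, we get $z\in X(h)\iff f(z)\in X(h)$.

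There is no real obstacle. The only point needing care is that Theorem \ref{DR1} is stated for arbitrary $f,g$, so it may be applied with the roles swapped. The proof of Theorem \ref{DR1} itself relies on $f$ and $g$ being entire: bounded images stay bounded via $M(C,f)$, and $g(w_n)\to\infty$ forces $w_n\to\infty$. Both facts hold for any entire $g$, so nothing beyond permutability is required.
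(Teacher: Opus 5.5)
Your proof is correct and follows the paper's route: the corollary comes from substituting $f\circ g=g\circ f$ into Theorem \ref{DR1}, exactly as the paper does for $I(f\circ g)$ in Theorem \ref{Compo_I}(2), which gives $z\in X(h)\iff g(z)\in X(h)$, and the same for $f$ by symmetry. Do drop the aside that $g(X(h))=X(h)$ follows from Theorem \ref{DR4}. That equality fails when $g$ omits a point of $X(h)$ (e.g.\ $f=g=e^z$, where $0\in I(e^{e^z})$ but $0\notin g(\mathbb{C})$); what actually holds is $g(X(h))=X(h)\cap g(\mathbb{C})$, which is harmless because complete invariance only requires $g^{-1}(X(h))=X(h)$.
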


%

\bigskip


\end{document}